\newtheorem{thm}{Theorem}[section]
\newtheorem{lem}{Lemma}[section]
\newtheorem{cor}[thm]{Corollary}
\newtheorem{prop}[lem]{Proposition}
\newtheorem{rem}[lem]{Remark}
\newtheorem{defn}[lem]{Definition}
\def\dr{\frac{d}{dr}}
\def\Tr{\operatorname{Tr}}
\def\tr{\operatorname{tr}}
\def\ind{\operatorname{ind}}
\def\a{\frac{a}{2}}
\def\k{\frac{k}{2}}
\def\i{\sqrt{-1}}
\def\l{\lambda}
\def\a{\alpha}
\def\R{\mathbb{R}}
\def\hc{\hat{c}}
\def\k{\kappa}
\def\r{r_{x}}
\def\C{\mathbb{C}}
\def\Im{\mathrm{Im}}
\def\d{\tilde{d}_T}
\def\half{\frac{1}{2}}
\def\n{\frac{n}{2}}
\def\e{\mathcal{E}}
\def\de{\delta_{\epsilon}}
\def\den{\delta_{\epsilon}^{-1}}
\def\ep{\epsilon}
\def\ce{c_\epsilon}
\def\Rt{\tilde{R}_{k,T}}
\def\hce{\hat{c}_\epsilon}
\def\epsq{\epsilon^{\frac{1}{2}}}
\def\epq{\epsilon^{\frac{1}{4}}}
\def\epnq{\epsilon^{-\frac{1}{4}}}
\def\epnsq{\epsilon^{-\frac{1}{2}}}
\def\pat{\frac{\partial}{\partial t}}
\def\t{\Theta}
\def\Vol{\operatorname{Vol}}
\def\Dom{\mathrm{Dom}}
\begin{document}
\def\o{\omega}
\def\O{\Omega}
\def\l{\lambda}
\title{Witten Deformation on Non-compact Manifolds\\
	-- {\Large Heat Kernel Expansion and Local Index Theorem}}
\author{Xianzhe Dai\thanks{Department of Mathematics, UCSB, Santa Barbara CA 93106, dai@math.ucsb.edu. Partially supported by the Simons Foundation}
	\and Junrong Yan\footnote{Department of Mathematics,  UCSB, Santa Barbara CA 93106, j\_yan@math.ucsb.edu}
}

\maketitle
\begin{abstract}
Asymptotic expansions of heat kernels and heat traces of Schr\"odinger operators on non-compact spaces are rarely explored, and even for cases as simple as $\mathbb{C}^n$ with (quasi-homogeneous) polynomials potentials, it's already very complicated. Motivated by path integral formulation of the heat kernel, we introduced a parabolic distance, which also appeared in Li-Yau's famous work on parabolic Harnack estimate. With the help of the parabolic distance, we derive a pointwise asymptotic expansion of the heat kernel for the Witten Laplacian with strong remainder estimate. When the deformation parameter of Witten deformation and time parameter are coupled, we derive an asymptotic expansion of trace of heat kernel for small-time $t$, and obtain a local index theorem. This is the second of  our papers in understanding Landau-Ginzburg B-models on nontrivial spaces, and in subsequent work, we will develop the Ray-Singer torsion for Witten deformation in the non-compact setting.
\end{abstract}
\section{Introduction}
\subsection{Overview}

Witten deformation is a deformation of the de Rham complex introduced in an extremely influential paper \cite{witten1982supersymmetry}. Witten deformation on closed manifolds has found many beautiful applications, such as the analytic proof of Morse inequalities,  the development of Floer homology theory, and Bismut-Zhang's proof \cite{bismutzhang1992cm} of Cheeger-M\"uller theorem (also known as the Ray-Singer conjecture).

The mathematical study of Landau-Ginzburg models has highlighted the question of understanding the Witten deformation on non-compact manifolds. In \cite{DY2020cohomology} we studied some of the fundamental questions in this regard, focusing on the relationships between the various cohomology theories involved. In this paper, we continue this study by looking into the heat kernel and index theoretic aspect of the Witten deformation on non-compact manifolds. In particular, one of our main results is a local index theorem for the Witten deformation on non-compact manifolds. For the very special case of Euclidean space $\mathbb C^n$ with a quasi-homogeneous polynomial, as is typical in Landau-Ginzburg models, the corresponding index theroem from our local index theorem reduces to the equality of the index with the Milnor number of the quasi-homogeneous polynomial, a result stated in \cite{fanfang2016torsion}. Local index theorems, besides their obvious interests, are important steps towards developing the theory of Ray-Singer analytic torsion and their related applications.

Recall that the Witten deformation deforms the de Rham complex $(\Omega^*(M), d)$  by the new differential 
$$d_{Tf}=d+ Tdf\wedge$$
where $f$ is a smooth function and $T$ is the deformation parameter. The spaces we focus on here are complete non-compact Riemannian manifolds $(M, g)$ with bounded geometry. The key to local index theory is the study of the heat kernel of the Witten Laplacian $\Box_{Tf}=(d_{Tf}+d^*_{Tf})^2$, in particular, its asymptotic expansion. In previous work \cite{fan2011schr}, \cite{fanfang2016torsion}, \cite{DY2020cohomology}, tameness conditions are imposed on the potential function $f$ in order for the Witten Laplacian to have discrete spectrums; here we introduce further tameness conditions which guarantee that the heat kernel of the Witten Laplacian is of trace class. In fact, we prove a weak Weyl Law for the eigenvalues of the Witten Laplacian. It is interesting to note that our tameness condition here is closely related to the semi-classical Weyl Law for Schr\"odinger operators in Euclidean space and are satisfied for the examples coming from Landau-Ginzburg models.

Developing the asymptotic expansion for the heat kernel of the Witten Laplacian presents further challenges in the non-compact setting, as we need a more refined remainder estimate so that the local index theorem can actually be integrated to an index theorem.
For the case of $\mathbb C^n$ with a quasi-homogeneous polynomial $f$, this is dealt with in \cite{fanfang2016torsion} by brute force, which does not give a needed strong remainder estimate nor generalize to the more general situation. To overcome the difficulty, we introduce a parabolic (meta-)distance which also appeared previously in the famous work of Li-Yau on Harnack inequality \cite{liyau1986}. This parabolic distance is also intimately related to the Agmon distance which plays a crucial role in our previous work \cite{DY2020cohomology}. The connection will then be exploited to establish the remainder estimate needed for the local index theorem. 

We would like to point out that the asymptotic expansion for the heat kernel of a Schr\"odinger operator as well as its trace on noncompact space is rarely explored. In the few cases it is studied (c. f. \cite{fanfang2016torsion} and \cite{fucci2018asymptotic}), it deals with the situation  when $M=\R^n$, $f$ is a polynomial. And even then, it is already very complicated. On the other hand,  though physicists are able to write a semi-classical asymptotic expansion of heat kernel via path integral, it is not sufficient for the asymptotic expansion of trace, as integration over noncompact space is involved. We found that the parabolic distance provides a much simpler and satisfying approach to the problem.


Another novel idea is in proving the local index theorem for the Witten deformation. Here  a modified Getzler rescaling involving the rescaling on the deformation parameter $T$ is used. Thus our result in \cite{DY2020cohomology} that the dimension of the $L^2$ cohomology of the Witten deformation is independent of $T$ (for $T>0$ or sufficiently large depending on the tameness conditions) plays a crucial role here.

In a separate paper we extend our treatment to Dirac/Callias type operators.
\newline

{\em Acknowledgment: We have benefitted from the preprint \cite{fanfang2016torsion} which has provided motivation and inspiration for us.  Thanks are also due to Guangbo Xu for interesting discussions a few years back when we started to look at these questions.
}

\subsection{Notations and Assumptions}
In this paper, we assume that all of our (Riemannian) manifolds have bounded geometry. Namely, 
\begin{defn}
Let $(M,g)$ be a complete Riemannian manifold with metric
$g$. $(M,g)$ is said to have bounded geometry, if the following
conditions hold:
\begin{enumerate}
\item The injectivity radius $\tau$ of $(M,g)$ is positive.
\item The curvature, as well as all its derivatives, is bounded, $|\nabla^m R|\le F_{m}$. Here $\nabla^m R$ is the $m$-th
covariant derivative of the curvature tensor and $C_m$ is a constant
only depending on $m$.
\end{enumerate}
\end{defn}

%

We now introduce the tameness conditions we need in treating the local index theorem. Various notions of tameness have been introduced, for example, the strong tameness condition in \cite{fan2011schr} and the weaker well tameness in \cite{DY2020cohomology}. Here we need stronger tameness conditions.

\begin{defn}
Let $(M,g)$ be a complete Riemannian manifold and $\k\in [0,1)$. We say $(M,g,f)$ is $\kappa$-regular tame if 
\begin{enumerate}
\item $\limsup_{p\to\infty} \frac{|\nabla^m f|}{|\nabla f|^{(m-1)\k+1}}<\infty,$ for any $m\geq 1$;
\item $\lim_{p\to\infty} |\nabla f|=\infty.$
\end{enumerate}
Here $\nabla f$ denotes the  gradient of $f$, $\nabla^m f= \nabla^{m-1} \nabla f, m\geq 1$, and $p\to\infty$ means that $d(p,p_0)\to \infty$ for the distance function from a fixed base point $p_0$.
\end{defn}

In typical examples from Landau-Ginzburg models, $M=\mathbb C^n$ with the Euclidean metric and a nondegenerate quasi-homogeneous polynomial $f$. Then $(\mathbb C^n, f)$ is $\kappa$-regular tame for some $\kappa < 1$, see the discussion in the last section. For our purpose, we reformulate one of the consequences of $\kappa$-regular tameness. Indeed, an inductive argument yields that, if $(M,g,f)$ is $\kappa$-regular tame, then for $V=|\nabla f|^2$, we have, for all $k\in \mathbb N$, 
\[\limsup_{p\to\infty}\frac{|\nabla^kV|}{|V|^{(k\k+2)/2}} < \infty.\]
\begin{rem}
In \cite{rozenbljum1974asymptotics}, in order to prove Weyl's law for Schr\"odinger operator on $\R^n$, Rozenbljum imposed similar $\k$-tameness conditions (see (0.6) in \cite{rozenbljum1974asymptotics}). Later we will show that with $\k$-tameness condition, one can prove a weaker version of Weyl's law.
\end{rem}

The  $\kappa$-regular tame condition ensures that, among other things, the Witten Laplacian has discrete spectrums, Cf. \cite{fan2011schr, DY2020cohomology}. As far as we know, examples coming from Landau-Ginzburg models satisfy the regular tame condition, see the discussion in the last section for a large class of such examples. 

Our next condition ensures that we have a good local index theory, as we will see later.

\begin{defn}
Fix $\a\geq n/2.$ A triple $(M,g,f)$ is called $\a$-polynomial tame, if $(M,g,f)$ is $\kappa$-regular tame for some $\k\in(0,1)$, and in addition, there is some constant $C$, such that for all $\lambda \geq 0$, 
\[\int_{\{p\in M:|\nabla f|^2(p)\leq \lambda\}}(\lambda-|\nabla f|^2)^{n/2}dvol_M\leq C\lambda^\a. \] 
\end{defn}

Again we will see that typical examples coming from Landau-Ginzburg models are polynomial tame, see the discussion in the last section.

\begin{rem}
This condition should be interpreted in terms of the (semiclassical) Weyl's law for Schr\"odinger operators in Euclidean space ( Cf. \cite{rozenbljum1974asymptotics,tachizawa1992eigenvalue} ), which would guarantee the polynomial growth of the eigenvalues. However, though expected,  we could not find in the literature such Weyl's law on manifolds. To focus our discussion on the asymptotic expansion of heat kernel and local index theorem, we only prove a weaker version of Weyl's law here under our assumption. In a separate paper we will prove such Weyl's law by using similar techniques in \cite{tachizawa1992eigenvalue} and also give a treatment using heat kernel following the techniques developped here in Sections 3 and 4.
\end{rem}

\subsection{Main Result}
As we will see in the next section, the $\a$-polynomial tame condition garantees that $\exp(-t\Box_{Tf})$ is of  trace class. 
Our first contribution is the pointwise asymptotic expansion of the heat kernel of the Witten Laplacian with strong remainder estimate. 

Let $(M, g, f)$ be $\a$-polynomial tame, and $K_{Tf}(t, x, y)$ denote the heat kernel of the Witten Laplacian $\Box_{Tf}$. Denote by $h_T(x, y)$ the average of the potential function $T^2 |\nabla f|^2$ on the geodesic segment from $x$ to $y$, Cf. \eqref{hT}.

\begin{thm} \label{main-heatasym} The heat kernel $K_{Tf}$ has the following complete pointwise asymptotic expansion. For any $x, y\in M$ such that $d(x, y) \leq \half \tau$, 
	\[K_{Tf}(t,x,y) \sim \frac{1}{(4\pi t)^{\n}}\exp(-d^2(x,y)/4t)\exp(-t\, h_T(x,y)) 
	\sum_{j=0}^{\infty}t^j\t_{T, j}(x,y),  \]
	as $t\rightarrow 0$. 
	Each $\t_{T,j}$ is a polynomial of $T$:
	\[\t_{T,j}(x,y)=\sum_{l=0}^{[\frac{j}{3}]+j}T^l\t_{l,j}(x,y),\]
	and, when restricted to the diagonal of $M\times M$, $\t_{l,j}(y,y)$ can be written as an algebraic combination of the curvature of the metric $g$, the function $f$, as well as their derivatives, at $y$; in addition, $\t_{T,0}(y,y)=\operatorname{Id}.$
	Moreover, we have the following remainder estimate. For any $k$ sufficiently large and any
	$a\in (0, 1)$,  
	\begin{align*}
	\left| K_{Tf}(t,x,y) - \frac{1}{(4\pi t)^{\n}}\exp(-d^2(x,y)/4t)\exp(-t\, h_T(x,y)) 
	\sum_{j=0}^{k}t^j\t_{T, j}(x,y) \right| \\
	\leq Ct^{\frac{1}{3}(1-\k)k -\frac{\k +2}{3} -\n +1}T^{\frac{-2k+4}{3}}\exp(-a \d(t,x,y)) , \hspace{1.5in}
	\end{align*} 
	for $t\in(0,1]$ and $T\in(0,t^{-\half}]$. 
\end{thm}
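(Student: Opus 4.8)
The plan is to construct a parametrix for $e^{-t\Box_{Tf}}$ by the Levi iteration (Duhamel) method, but with the Gaussian--times--exponential factor $\frac{1}{(4\pi t)^{n/2}}e^{-d^2(x,y)/4t}e^{-t h_T(x,y)}$ playing the role of the Euclidean heat kernel, rather than the usual $\frac{1}{(4\pi t)^{n/2}}e^{-d^2(x,y)/4t}$. The motivation, as the introduction indicates, is the path integral: the classical action along a near-minimizing path from $y$ to $x$ splits into the kinetic term $d^2(x,y)/4t$ and the averaged potential term $t\,h_T(x,y)$, and the parabolic (meta-)distance $\tilde d_T(t,x,y)$ is precisely what controls the sub-leading fluctuations. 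So first I would write $\Box_{Tf} = \Delta + T\,\mathcal A + T^2|\nabla f|^2$, where $\Delta$ is the Hodge Laplacian on forms, $\mathcal A$ is the zeroth-order Clifford-type term coming from $[\nabla, df\wedge + \iota_{\nabla f}]$ (bounded in terms of $\nabla^2 f$), and compute $\Box_{Tf}$ applied to the ansatz $P_N(t,x,y) = \frac{1}{(4\pi t)^{n/2}}e^{-d^2(x,y)/4t}e^{-t h_T(x,y)}\sum_{j=0}^N t^j\Theta_{T,j}(x,y)$ in geodesic normal coordinates centered at $y$. This yields the transport equations for the $\Theta_{T,j}$: a first-order ODE along geodesics for $\Theta_{T,0}$ (giving $\Theta_{T,0}(y,y)=\mathrm{Id}$ after normalization by the square root of the volume density) and then recursively $\Theta_{T,j}$ in terms of $\Delta_x\Theta_{T,j-1}$, $\mathcal A\,\Theta_{T,j-1}$, and the error terms produced by differentiating $e^{-t h_T}$. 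The key bookkeeping point is tracking powers of $T$: each application of $\mathcal A$ brings one power of $T$ and one derivative of $\nabla^2 f$, while differentiating the $h_T$-factor in $t$ or in $x$ brings factors of $T^2|\nabla f|^2$ or $T\nabla^2 f$; a careful induction, using $\kappa$-regular tameness to bound $|\nabla^m f|$ by $|\nabla f|^{(m-1)\kappa+1}$, shows $\Theta_{T,j}$ is polynomial in $T$ of degree $\le j + [j/3]$, with the $[j/3]$ term arising from the ``extra'' $T$-weight one can absorb at the cost of $t^{1/3}$ in the remainder.

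Next I would establish the remainder estimate. The standard Duhamel/Volterra-series argument gives
\[
K_{Tf}(t,x,y) - P_k(t,x,y) = -\int_0^t \int_M K_{Tf}(t-s,x,z)\, \bigl(\Box_{Tf} + \partial_s\bigr) P_k(s,z,y)\, dvol(z)\, ds,
\]
and the point is that $(\Box_{Tf}+\partial_s)P_k(s,z,y)$ is, by construction of the transport equations, equal to $\frac{s^k}{(4\pi s)^{n/2}}e^{-d^2(z,y)/4s}e^{-s h_T(z,y)} \cdot (\Delta_z + \mathcal A)\Theta_{T,k}(z,y)$ up to the $h_T$-correction terms, all of which carry the Gaussian-times-$e^{-sh_T}$ weight and a coefficient controlled (via $\kappa$-tameness) by a power of $|\nabla f|$. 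The crucial new ingredient, replacing the brute-force approach of \cite{fanfang2016torsion}, is to convert the product $e^{-d^2/4s}e^{-s h_T}$ times the polynomial-in-$|\nabla f|$ coefficient into a bound of the form $C\, s^{-m}\,e^{-a'\tilde d_T(s,z,y)}$ for any $a'<1$; here one uses that $\tilde d_T(s,z,y)$ is comparable (up to the loss absorbed into $a$ vs.\ $1$) to $\frac{d^2(z,y)}{4s} + s\,h_T(z,y)$, and that the extra factor $|\nabla f|^{O(1)}$ can be absorbed into the exponential at the cost of an $s$-power, exactly because on the region where $|\nabla f|$ is large the factor $e^{-s h_T} = e^{-sT^2 \overline{|\nabla f|^2}}$ decays. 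Then the convolution $\int_0^t\int_M e^{-a'\tilde d_T(t-s,x,z)}e^{-a'\tilde d_T(s,z,y)}\,dvol(z)\,ds$ is handled by a semigroup/triangle-type property of the parabolic distance (essentially $\tilde d_T(t-s,x,z)+\tilde d_T(s,z,y)\ge \tilde d_T(t,x,y)$ with the usual constant loss, plus Gaussian integration in $z$ giving a factor $(t-s)^{?}s^{?}$), which after a Beta-function integral in $s$ produces the stated power $t^{\frac13(1-\kappa)k - \frac{\kappa+2}{3} - n/2 + 1}$ and the stated $T$-power $T^{(-2k+4)/3}$. One then iterates the Duhamel formula (Levi's parametrix method) to upgrade the single-step bound to the full remainder estimate, using that $K_{Tf}(t-s,x,z)$ itself satisfies an a priori on-diagonal Gaussian-type bound (from the earlier sections establishing trace-class and the weak Weyl law, together with finite propagation speed for the wave equation to localize to $d(x,z)\le \tau$).

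The coupling condition $T\in(0,t^{-1/2}]$ enters at exactly the moment one optimizes: it guarantees $t T^2 \le 1$, so that $e^{-t h_T}$ and all the $T^2|\nabla f|^2$-weighted error terms stay controlled on the relevant scale, and it is the reason the remainder's $T$-power and $t$-power are linked through the $\frac13$-exponent — tracking the worst term one finds the error is $\sim (tT^2)^{?}\cdot t^{?}$ and balancing forces the $1/3$ powers. The smoothness/algebraicity of $\Theta_{l,j}(y,y)$ on the diagonal follows, as in the classical local index literature, from solving the transport equations at $x=y$: the geodesic distance and its derivatives at the diagonal are universal, $h_T(y,y)=T^2|\nabla f|^2(y)$, and each recursion step applies only $\Delta$, $\mathcal A$, and multiplication operators built from $g$, $f$ and finitely many derivatives.

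The main obstacle I anticipate is the third step of the remainder argument: proving the quantitative comparison between the ``physical'' weight $\frac{d^2(x,y)}{4t}+t\,h_T(x,y)$ and the parabolic distance $\tilde d_T(t,x,y)$, together with the sub-additivity $\tilde d_T(t_1,x,z)+\tilde d_T(t_2,z,y)\gtrsim \tilde d_T(t_1+t_2,x,y)$ needed to run the convolution, \emph{with} an explicit constant close enough to $1$ that the final $e^{-a\tilde d_T}$ with arbitrary $a<1$ survives; this is where the connection to the Agmon distance from \cite{DY2020cohomology} and the Li--Yau parabolic distance \cite{liyau1986} must be made precise, and where the $\kappa$-tameness hypotheses must be used in full strength to control how $|\nabla f|$ varies along geodesics. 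Everything else — the transport equations, the $T$-degree count, the Duhamel iteration — is a (lengthy but) standard adaptation of the compact local index machinery.
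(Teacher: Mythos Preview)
Your approach is essentially the paper's: the parametrix ansatz $\mathcal{E}_0\mathcal{E}_{1,T}\sum t^j\Theta_{T,j}$, the transport ODEs along radial geodesics, the $T$-degree count, the Levi--Volterra series, and the use of the parabolic distance to control convolutions are all exactly what the paper does in Sections~3--4. Two clarifications will save you effort and correct a mislocated difficulty.

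First, the properties of $\tilde d_T$ that you flag as the ``main obstacle'' are in fact immediate from its definition as an infimum of actions $S_{t,x,y}(c)=\int_0^t\bigl(\tfrac14|c'|^2+T^2V(c)\bigr)\,ds$: the triangle inequality $\tilde d_T(t-s,x,z)+\tilde d_T(s,z,y)\ge\tilde d_T(t,x,y)$ holds \emph{exactly} (concatenate paths), and the upper bound $\tilde d_T(t,x,y)\le\tfrac{d^2(x,y)}{4t}+t\,h_T(x,y)$ follows by evaluating $S$ on the minimal geodesic. No constant loss, no Agmon input needed here. Hence once $\tilde R_{k,T}$ is bounded pointwise by $C\,\chi_{B_x}\,t^{\alpha-1}T^\beta e^{-a\tilde d_T}$, the iterated convolutions $\tilde R_{k,T}^{*l}$ are controlled by $\frac{C^l t^{\alpha l-1}T^{\beta l}}{(l-1)!}e^{-a\tilde d_T}$ directly (Lemma~4.4). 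You also do not need any a priori Gaussian bound on $K_{Tf}$: the Volterra series $K_{Tf}=K^k_{Tf}+K^k_{Tf}*\sum_l(-1)^l\tilde R_{k,T}^{*l}$ involves only the explicit parametrix $K^k_{Tf}$, never the true heat kernel, so the weak Weyl law and finite propagation speed play no role in this theorem.

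Second, the Agmon-type argument \emph{is} needed, but not where you locate it. The transport coefficients obey $|\Theta_{T,j}(x,y)|\le C\,\bar V_\gamma^{\kappa'j}T^{[j/3]+j}$ with $\bar V_\gamma=\sup_\gamma V$ along the geodesic $\gamma$ from $x$ to $y$, and absorbing $\bar V_\gamma^l$ into $e^{-(\frac{d^2}{4t}+th_T)}$ is \emph{not} automatic: $h_T$ is the \emph{average} of $T^2V$ along $\gamma$ and can be small even when $\bar V_\gamma$ is large. The paper handles this by a dichotomy (Lemma~4.7): either $V$ stays within a factor~$2$ of $\bar V_\gamma$ along $\gamma$, so $h\ge\bar V_\gamma/2$ and direct absorption works; or $V$ drops below $\bar V_\gamma/2$ somewhere, in which case the elementary inequality $S_{t,x,y}(\gamma)\ge L_{Tf}(\gamma)$ together with an Agmon-length lower bound (Lemma~4.4, using $\kappa$-regular tameness to control how fast $V$ can vary) gives $\tfrac{d^2}{4t}+th_T\ge\bar\beta\,T\,\bar V_\gamma^{(1-\kappa)/2}$, again allowing absorption at the cost of the stated $t$- and $T$-powers. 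This is the genuine technical point; the rest is bookkeeping.
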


Here $\d(t,x,y)$ is the parabolic distance alluded at the beginning of the introduction, see \eqref{d0} for the precise definition. By relating it to the Agmon distance we are able to obtain an effective bound on $\d(t,x,y)$, which, when combined with the theorem above, yields the following corollary.

\begin{cor}	\label{cor-dheatasym} 
For $T=t^{-\half}$, and any $k$ sufficiently large, any	$a\in (0, 1)$,  
		\begin{align*}
	\left| K_{t^{-\half}f}(t,x,x)- \frac{1}{(4\pi t)^{\n}}\exp(- |\nabla f|^2(x)) 
	\sum_{j=0}^{\infty}\sum_{l=0}^{[\frac{j}{3}]+j}t^{j-\frac{l}{2}}\t_{l,j}(x,x) \right|  \\
	\leq Ct^{\frac{1}{3}(2-\k)k -\frac{\k +1}{3} -\n}\exp(-a\bar{\beta} |\nabla f|^{1-\k}(x) ) , \hspace{.8in}
	\end{align*} 
	for $t\in(0,1]$, where $\bar{\beta}>0$ is a constant depending only on the bounds in the tameness condition. In particular, we have the following small time asymptotic expansion of the heat trace:
	\[ \operatorname{Tr}_s\left(\exp (-t\Box_{t^{-\frac{1}{2}}f})\right) \sim \frac{1}{(4\pi t)^{\n}} 
	\sum_{j=0}^{\infty} \sum_{l=0}^{[\frac{j}{3}]+j}t^{j-\frac{l}{2}} \int_M \exp(-|\nabla f|^2(x)) \operatorname{tr}_s(\t_{l, j}(x,x)) dx,  \]
	as $t\rightarrow 0$, with the remainder estimate
	\begin{align*}
	\left| \operatorname{Tr}_s\left(\exp (-t\Box_{t^{-\frac{1}{2}}f})\right)- \frac{1}{(4\pi t)^{\n}} 
	\sum_{j=0}^{k} \sum_{l=0}^{[\frac{j}{3}]+j}t^{j-\frac{l}{2}} \int_M \exp(-|\nabla f|^2(x)) \operatorname{tr}_s(\t_{l, j}(x,x)) dx\right|  \\
	\leq Ct^{\frac{1}{3}(2-\k)k -\frac{\k +1}{3} -\n }. \hspace{1.5in}
	\end{align*} 
Here $\Tr_s$ and $\tr_s$ denote the global supertrace and pointwise supertrace respectively.
\end{cor}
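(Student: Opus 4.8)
The plan is to deduce Corollary~\ref{cor-dheatasym} from Theorem~\ref{main-heatasym} by specializing the pointwise expansion to the diagonal with the coupling $T = t^{-\half}$ and then integrating over $M$; the one genuinely new ingredient is an effective lower bound for the parabolic distance $\td$ on the diagonal. Setting $x = y$ and $T = t^{-\half}$ in Theorem~\ref{main-heatasym} (legitimate, since $d(x,x) = 0 \le \half\tau$ and $t^{-\half}\in(0,t^{-\half}]$), we have $h_T(x,x) = T^2|\gf|^2(x)$, so $\exp(-t\,h_T(x,x)) = \exp(-|\gf|^2(x))$, and $\sum_j t^j\t_{T,j}(x,x) = \sum_j\sum_l t^{j-l/2}\t_{l,j}(x,x)$, which is the series in the corollary. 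In the remainder, $(t^{-\half})^{(-2k+4)/3} = t^{(k-2)/3}$, and the exponent of $t$ becomes $\frac13(1-\k)k + \frac k3 - \frac{\k+2}{3} - \frac23 + 1 - \n = \frac13(2-\k)k - \frac{\k+1}{3} - \n$, with the factor $\exp(-a\,\td(t,x,x))$ surviving (now with $T = t^{-\half}$).

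The crux is to show that for $T = t^{-\half}$ and $t\in(0,1]$ one has $\td(t,x,x) \ge \bar{\beta}|\gf|^{1-\k}(x) - C_0$, with $\bar\beta>0$ and $C_0$ depending only on the tameness bounds ($C_0$, which only matters where $|\gf|(x)$ is bounded, is then absorbed into $C$ via the factor $e^{aC_0}$). After the parabolic rescaling $s\mapsto s/t$, $\td(t,x,x) = \inf_\eta\big[\tfrac1{4t}\int_0^1|\dot\eta|^2 + tT^2\int_0^1|\gf|^2(\eta)\big]$ over loops $\eta$ at $x$, and the coupling forces $tT^2 = 1$. Dichotomize on whether $\eta$ leaves the ball $B\big(x, c\,|\gf|^{-\k}(x)\big)$: by $\k$-regular tameness $|\hf|\lesssim|\gf|^{\k+1}$ in the far region, so $|\gf|\ge\half|\gf|(x)$ on that ball once $c$ is small and $|\gf|(x)$ large; a loop remaining inside then contributes $\ge\tfrac14|\gf|^2(x)\ge|\gf|^{1-\k}(x)$ through the potential term, while a loop leaving it travels a distance $\ge c|\gf|^{-\k}(x)$ through a region of potential density $\ge\tfrac14|\gf|^2(x)$, so the arithmetic--geometric mean inequality \emph{applied to that initial escape segment} forces a contribution $\gtrsim|\gf|^{1-\k}(x)/\sqrt t \ge|\gf|^{1-\k}(x)$. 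This is essentially the statement that $\td(t,x,x)$ dominates, up to constants, the Agmon distance $d_{\mathrm{Ag}}(x,\Crit f)$ -- the global Agmon length of a loop at $x$ can vanish, which is why the estimate must be localized to the escape segment -- and the latter is $\gtrsim|\gf|^{1-\k}(x)$ because along any path the Agmon length element $|\gf|\,ds$ dominates $|d(|\gf|^{1-\k})|$ up to a constant, again by $\k$-regular tameness; this is the link to the Agmon distance of \cite{DY2020cohomology} alluded to in the introduction. Substituting this into the estimate above gives the pointwise assertion of the corollary.

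For the heat trace, since $\exp(-t\Box_{t^{-\half}f})$ is trace class (by $\a$-polynomial tameness, as recorded above) with smooth kernel, $\Tr_s(\exp(-t\Box_{t^{-\half}f})) = \int_M\tr_s K_{t^{-\half}f}(t,x,x)\,dx$, and one integrates the pointwise expansion termwise. Two convergence facts, both from $\a$-polynomial tameness, are needed: $\Vol(\{|\gf|^2\le\lambda\}) = O(\lambda^{\a-\n})$ (restrict the defining integral to $\{|\gf|^2\le\lambda/2\}$, where $(\lambda-|\gf|^2)^{\n}\ge(\lambda/2)^{\n}$), and the fact that each $\t_{l,j}(x,x)$ is a universal polynomial in the bounded curvature and in $\hf,\nabla^3 f,\dots$, hence by $\k$-regular tameness $|\tr_s\t_{l,j}(x,x)|$ is bounded by a polynomial in $|\gf|(x)$; a layer-cake estimate then gives $\int_M e^{-|\gf|^2(x)}|\tr_s\t_{l,j}(x,x)|\,dx<\infty$ (the stated coefficients) and $\int_M e^{-a\bar\beta|\gf|^{1-\k}(x)}\,dx<\infty$ (the integrand is locally integrable since $\a\ge\n$ and $\k<1$, and the exponential beats the polynomial volume growth), so integrating the pointwise remainder yields exactly $Ct^{\frac13(2-\k)k-\frac{\k+1}{3}-\n}$. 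As this power tends to $+\infty$ with $k$ (because $2-\k>0$), the identity is a bona fide small-time asymptotic expansion.

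The main obstacle is the uniform-in-$t$ lower bound for $\td(t,x,x)$ of the second paragraph. Both extremes are easy -- as $t\to0$ the potential term pins $\eta$ near $x$, so $\td(t,x,x)\to|\gf|^2(x)$, while for $t$ of order one $\td(t,x,x)$ is comparable to twice the Agmon distance from $x$ to $\Crit f$ -- but the uniform statement, especially for loops that escape $B(x,c|\gf|^{-\k}(x))$ only slowly, is what forces the arithmetic--geometric mean inequality to be applied locally on the escape segment rather than globally. The remaining steps -- the exponent arithmetic of the first paragraph, the termwise integration and its layer-cake estimates -- are routine given Theorem~\ref{main-heatasym} and the tameness hypotheses.
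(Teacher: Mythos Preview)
Your proposal is correct and follows the same route as the paper: specialize Theorem~\ref{main-heatasym} to the diagonal with $T=t^{-1/2}$, invoke a lower bound $\td(t,x,x)\gtrsim|\nabla f|^{1-\kappa}(x)$ obtained via a dichotomy combined with the AM--GM/Agmon-length inequality (this is exactly the content of the paper's Lemma~\ref{effbound}, built on Lemmas~\ref{paradagmon} and~\ref{agmon}), and then integrate using the polynomial volume growth recorded in Lemma~\ref{Klambda}. Your dichotomy---on whether the loop escapes a ball of radius $c|\nabla f|^{-\kappa}(x)$---is a cosmetic repackaging of the paper's, which instead asks whether $V$ drops by a factor of two along the minimizing curve; the underlying mechanism (tameness forces the Agmon length across a level set of $|\nabla f|$ to be $\gtrsim|\nabla f|^{1-\kappa}$) is identical.
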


On the other hand, by Theorem 1.3 in \cite{DY2020cohomology} and the $\a$-polynomial tame condition, the index of the Witten Laplacian
\[\chi(M,d_{Tf})=\sum_{i=0}^n (-1)^ib_i(T),\ b_i(T)=dim(H^i_{(2)}(M,d_{Tf}))\]
 is independent of $T>0$. In fact we have that $\Tr_s(exp(-t\Box_{Tf}))$ is independent of $t$ and $T$ and $$\chi(M,d_{Tf})=\ind(\Box_{Tf})=\Tr_s(-t\exp(\Box_{t^{-\half}f}))= \int_M \operatorname{tr}_s(K_{Tf}(t,x,x)) dx.$$
Now apply our new rescaling technique, one has

\begin{thm}[Local index theorem and index formula for $\Box_{Tf}$]\label{main1}
For any $x\in M$, we have 
\[ \lim_{t\to 0}\operatorname{tr}_s(K_{t^{-\frac{1}{2}}f}(t,x,x))= \frac{(-1)^{[\frac{n+1}{2}]}}{\pi^\n}\exp(-|\nabla f(x)|^2)\int^B\exp(-\frac{\widetilde{R}(x)}{2}-\widetilde{\nabla}^2 f(x)).
\]
In particular, for any $T>0$,
\begin{equation}\label{index}
\ind(\Box_{Tf})=\frac{(-1)^{[\frac{n+1}{2}]}}{\pi^\n}\int_{M}\exp(-|\nabla f|^2)\int^B\exp(-\frac{\tilde{R}}{2}-\tilde{\nabla}^2 f).
\end{equation}
\end{thm}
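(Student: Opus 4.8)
The plan is to deduce the pointwise limit from Corollary~\ref{cor-dheatasym} and a modified Getzler rescaling, and then integrate. By Corollary~\ref{cor-dheatasym}, for $t\in(0,1]$,
\[\tr_s\bigl(K_{t^{-\half}f}(t,x,x)\bigr)\ \sim\ \frac{1}{(4\pi t)^{\n}}\,e^{-|\nabla f|^2(x)}\sum_{j\ge0}\sum_{l=0}^{[\frac{j}{3}]+j}t^{\,j-l/2}\,\tr_s\bigl(\t_{l,j}(x,x)\bigr),\]
uniformly in $x$. Since $j-l/2\ge\bigl(j-[\tfrac{j}{3}]\bigr)/2\ge0$ with equality only at $j=l=0$, the $t$-powers lie in $\tfrac12\mathbb{Z}_{\ge0}$; hence the existence of $\lim_{t\to0}\tr_s(K_{t^{-\half}f}(t,x,x))$ is equivalent to the vanishing, under $\tr_s$, of the coefficients of $t^{k/2}$ for $0\le k<n$, and then the limit equals $(4\pi)^{-\n}e^{-|\nabla f|^2(x)}$ times the coefficient of $t^{\n}$. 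I would establish both the cancellation of these would-be singular terms and the value of the surviving term through a Getzler rescaling in which the deformation parameter $T$ is rescaled along with the space and time variables.

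Concretely, I would fix $x_0\in M$, pass to geodesic normal coordinates on $B(x_0,\tau/2)$ with the synchronous trivialization of $\Lambda^*T^*M$, and use the Weitzenb\"ock formula $\Box_{Tf}=\Delta+T^2|\nabla f|^2+T\,\mathcal R_f$, where $\mathcal R_f$ is the degree-zero endomorphism built from $\nabla^2 f$ by exterior and interior multiplication. For $u\in(0,1]$ set $t=u^2$, $T=u^{-1}$, let $\delta_u$ combine the dilation $x\mapsto ux$ with Getzler's rescaling of the fermionic variables (form-degree $p$ scaled by $u^{-p}$), and put $L_u:=u^2\,\delta_u\,\Box_{u^{-1}f}\,\delta_u^{-1}$, acting on $\Lambda^*T^*_{x_0}M$-valued functions. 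The coupling $T=u^{-1}=t^{-1/2}$ is forced by the rescaling: it makes $u^2T^2|\nabla f(ux)|^2=|\nabla f(ux)|^2\to|\nabla f|^2(x_0)$ bounded, while $u^2T\,\mathcal R_f(ux)=u\,\mathcal R_f(ux)$ combines with the fermionic rescaling to leave the Hessian at order $u^0$. It is exactly here that I would invoke the result of \cite{DY2020cohomology} — that $\Tr_s(e^{-t\Box_{Tf}})$ is independent of $T$ under $\kappa$-regular tameness — to justify computing the index via this specific coupling.

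The crux is to show that, as $u\to0$, the terms of $L_u$ carrying negative powers of $u$ — arising from the second-order part together with the fermionic-rescaled curvature and Hessian pieces — cancel (the analogue for $\Box_{Tf}$ of Getzler's lemma on the rescaled connection), so that $L_u$ converges coefficientwise to a generalized harmonic oscillator, which one computes to be essentially
\[L_0=-\sum_{i=1}^n\Bigl(\partial_i-\tfrac14\sum_{j}\widetilde{R}_{ij}(x_0)\,x^j\Bigr)^{2}+\widetilde{\nabla}^2 f(x_0)+|\nabla f|^2(x_0),\]
with $\widetilde{R},\widetilde{\nabla}^2 f$ understood as even exterior-algebra elements via the fermionic variables. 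A Duhamel/Volterra expansion with remainder bounds uniform in $u$ — where bounded geometry and $\a$-polynomial tameness would be used to control the noncompact tails — then upgrades this to heat-kernel convergence at the origin, $\bigl(\delta_u e^{-t\Box_{u^{-1}f}}\delta_u^{-1}\bigr)(0,0)\to e^{-L_0}(0,0)$. Applying Mehler's formula to $L_0$ and noting that on the diagonal $\tr_s$ is the Berezin integral $\int^B$ (which selects the top fermionic degree, so the lower powers of $u$ drop out and the $t^{-\n}$ built into the rescaling is balanced by a $t^{\n}$), I expect to obtain
\[\lim_{t\to0}\tr_s\bigl(K_{t^{-\half}f}(t,x_0,x_0)\bigr)=\frac{(-1)^{[\frac{n+1}{2}]}}{\pi^{\n}}\,e^{-|\nabla f|^2(x_0)}\int^B\exp\!\Bigl(-\tfrac12\widetilde{R}(x_0)-\widetilde{\nabla}^2 f(x_0)\Bigr),\]
the constant coming from Mehler's formula, the Mathai--Quillen change of fermionic variables that absorbs the $\sinh$-Jacobian, and the Pfaffian normalization. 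For \eqref{index}, I would then use — as recalled before the theorem — that $\Tr_s(e^{-t\Box_{Tf}})$ is independent of $t,T$ and equals $\ind(\Box_{Tf})=\int_M\tr_s(K_{Tf}(t,x,x))\,dx$; taking $T=t^{-1/2}$, letting $t\to0$, and using the uniform remainder of Corollary~\ref{cor-dheatasym} together with the $\a$-polynomial tame volume bound (which makes $e^{-|\nabla f|^2}$ integrable against the polynomials $\tr_s\t_{l,j}$) as a dominant, I would pass the limit inside the integral and substitute the pointwise formula. The main obstacle is the first half of this last paragraph: the Getzler-degree bookkeeping for $\Box_{Tf}$ under the coupled rescaling, verifying that every negative power of $u$ cancels and that precisely $\widetilde{\nabla}^2 f$ survives at order $u^0$, and — a difficulty with no compact analogue — making the passage from operator convergence to heat-kernel convergence uniform on the noncompact manifold.
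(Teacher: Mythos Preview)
Your overall strategy---a Getzler-type rescaling that also rescales $T$, followed by integration using the uniform remainder from Corollary~\ref{cor-dheatasym}---is exactly the paper's. The gap is in the identification of the limit operator $L_0$.

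For the de Rham complex one does \emph{not} obtain a generalized harmonic oscillator. The rescaling used here is the Bismut--Zhang rescaling of \emph{both} Clifford structures $c$ and $\hat c$ (not a form-degree rescaling), and the paper invokes \cite[(4.60)]{bismutzhang1992cm} to get
\[
\Box_{Tf,0}\;=\;-\Delta_{T_{x_0}M}\operatorname{Id}\;-\;\tfrac12\sum_{i<j,k<l}R_{ijkl}(0)\,e^i\wedge e^j\otimes \hat e^k\wedge \hat e^l\;+\;T^2|\nabla f|^2(x_0)\;+\;T\,\nabla^2_{e_ie_j}f(x_0)\,e^i\otimes\hat e^j.
\]
The curvature survives as a \emph{constant} (in $x$) endomorphism, not as a connection twist $\bigl(\partial_i-\tfrac14\widetilde R_{ij}x^j\bigr)^2$; the latter is what one sees for Dirac operators with a single Clifford rescaling. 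Consequently the heat kernel of $\Box_{Tf,0}$ is simply
\[
K_{Tf,0}(t,x,y)=\e_0(t,x,y)\,\exp\!\Bigl(-t\bigl[\tfrac{\tilde R(x_0)}{2}+TL_{f,0}(x_0)+T^2|\nabla f|^2(x_0)\bigr]\Bigr),
\]
and no Mehler formula or Mathai--Quillen $\sinh$-Jacobian enters. If you carried through your harmonic-oscillator $L_0$ and applied Mehler, you would produce an $\hat A$-type factor rather than $\int^B\exp(-\tfrac{\tilde R}{2}-\tilde\nabla^2 f)$, i.e.\ the wrong density. A related symptom: your notation $\widetilde R_{ij}$ treats $\tilde R$ as a $2$-form--valued endomorphism, but in this paper $\tilde R=-R_{ijkl}e^ie^j\hat e^k\hat e^l\in\Omega^*(M)\hat\otimes\Omega^*(M)$ is a scalar in both sets of fermionic variables.

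Once you replace $L_0$ by the correct $\Box_{Tf,0}$, the rest of your outline lines up with the paper: convergence of the rescaled heat kernel is handled by a Duhamel comparison (the paper's Proposition~\ref{epsilongoesto0}, using the estimates of Section~\ref{heatker}), the supertrace becomes the Berezin integral via Proposition~\ref{supertrace}, and the passage to the global index is Proposition~\ref{locind2} together with the $T$-independence from \cite{DY2020cohomology}.
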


Here  $\int^B$ denotes the Berezin integral, which will be introduced in a moment, and  $\tilde{R},\tilde{\nabla}^2f\in \O^*(M)\hat\otimes \O^*(M)$ are defined as 
\[\tilde{R}=-\sum_{i<j,k<l}R_{ijkl}e^ie^j\hat{e}^k\hat{e}^l, \ \ \ \  \tilde{\nabla}^2f=\nabla^2_{e_i,e_j} fe^i\hat{e}^j\] 
for some orthonormal frame $\{e_i\}$ in $TM$ and its dual frame $\{e^i\}$ in $T^*M$. We have used $\{\hat{e}_i\}$ to denote the same orthonormal frame in the second copy of $T^*M$. For any $\o\in \O^*(M)\hat\otimes \O^*(TM)$, $I=\{i_1,...,i_k\}\subset {1,2,...,n}$, we write $\o$ as
\[\o=\sum_{I}w_I\hat{e}^I,\]
where $\hat{e}^I=\hat{e}^{i_1}\wedge...\wedge \hat{e}^{i_k}.$
Then the Berezin integral is defined as
\begin{equation*} \int^B:\, \O^*(M)\hat\otimes \O^*(M)\mapsto \O^*(M), \ \ \ \ \ \ 
\int^B \o=\o_{1,2,...,n}.
\end{equation*}

\begin{rem} 
\ \\	
\begin{enumerate}
\item Here the index density is computed by coupling $tT^2=1$.  Our arguments still work if we set $tT^2$ to be any positive constant $T_0$. As $T_0\to \infty$, the integral of index density localizes  at critical points of $f$. On the other hand, when $T_0\to 0^+$,  the index of $\Box_{Tf}$ should depend on "the topology away from infinity" and the behavior of $f$ near the infinity. This will be discussed in more detail in a separate paper where we extend our treatment to Dirac/Callias type operators.

\item When $M$ is compact, (\ref{index}) is a special case of a formula in chapter 3 of \cite{zhang2001lectures}.

\item Notice that $\int^B\exp(-\nabla^2 f)=(-1)^{[\n]}\det(-\nabla^2 f)$. Thus, when $M=\R^n$, (\ref{index}) reduces to 
\[\chi(\R^n,d_f)=\frac{(-1)^n}{\pi^\n}\int_{\R^n}\exp(-|\nabla f|^2)\det(-\nabla^2f)dvol.\]
In particular, when $M=\C^n$, $f$ is a holomorphic function such that its real part $\Re f$ is polynomial tame, we have
\begin{align*}\chi(\C^n,d_f)&=\frac{1}{\pi^n}\int_{\C^n}\exp(-|\nabla \Re f|^2)det(-\nabla^2\Re f)dvol\\
&=\frac{(-1)^n}{\pi^n}\int_{\C^n}\exp(-|\partial f|^2)|det(-\partial^2f)|^2dvol \end{align*}
is given by the Milnor number of $f$. This is a generalization of a result in \cite{fanfang2016torsion}, see the last section for more discussion. 
\end{enumerate}
\end{rem}

\section{Weak Weyl Law} \label{wwl}

In this section we will show that the polynomial tame condition implies that $\exp(-t\Box_{Tf})$ is of  trace class. This is achieved by proving a weak Weyl law which shows that the eigenvalues of the Witten Laplacian grows polynomially. The Agmon estimate developped in \cite{DY2020cohomology} plays a crucial role here.

\subsection{Review of Hodge Theory for Witten Laplacian} \label{rohtfwl}

For any {$T> 0$}, let 
$$d_{Tf}:=d+Tdf\wedge:\Omega^*(M)\mapsto \Omega^{*+1}(M)$$
 be the so-called Witten deformation of de Rham operator $d$. As usual,  the metric $g$ induces a canonical metric (still denote it by $g$) on $\Lambda^*(M)$, which then defines an inner product $(\cdot,\cdot)_{L^2}$ on $\Omega^*_c(M)$:
\[(\phi,\psi)_{L^2}=\int_M (\phi, \psi)_gdvol,\phi,\psi\in \Omega^*_c(M). \]

Let $L^2\Lambda^*(M)$ be the completion of $\Omega_c^*(M)$ with respect to  $\|\cdot\|_{L^2}$, and $L^2(M):=L^2\Lambda^0(M).$

Then $d_{Tf}$ is an unbounded operator on $L^2\Lambda^*(M)$ with domain $\Omega^*_c(M)$. Also, it has a formal adjoint operator $\delta_{Tf}$, with $\Dom(\delta_{Tf})=\Omega^*_c(M),$ such that
\[(d_{Tf}\phi,\psi)_{L^2}=(\phi,\delta_{Tf}\psi)_{L^2},\phi,\psi\in\Omega^*_c(M).\]

Set $\Delta_{H,Tf}=(d_{Tf}+\delta_{Tf})^2,$ and we denote the Friedrichs extension of $\Delta_{H,Tf}$ by $\Box_{Tf}$. If $(M, g)$ is complete then $\Delta_{H,Tf}$ is essentially self-adjoint (and hence  $\Box_{Tf}$ is the unique self-adjoint extension). In \cite{DY2020cohomology}
We proved that when $(M, g, f)$ is tame,
\begin{equation}\label{dec}L^2\Lambda^*(M)=\ker\Box_{Tf}\oplus\Im \bar{d}_{Tf}\oplus\Im \bar\delta_{Tf},\end{equation}
where $\bar{d}_{Tf}$ and $\bar\delta_{Tf}$ are the graph extensions of $d_{Tf}$ and $\delta_{Tf}$ respectively.

Setting $\Omega_{(2)}^*(M, Tf):=\Dom(\bar{d}_{Tf})\cap\Omega^*(M),$  we have a chain complex
\[\cdots \xrightarrow{d_{Tf}}\Omega_{(2)}^*(M, Tf)\xrightarrow{d_{Tf}}\Omega_{(2)}^{*+1}(M, Tf)\xrightarrow{d_{Tf}}\cdots.\]
Let $H^*_{(2)}(M,d_{Tf})$ denote the cohomology of this complex. In \cite{DY2020cohomology}, we have shown that $H^*_{(2)}(M,d_{Tf})\cong\ker\Box_{Tf}$, provided $(M, g, f)$ is well tame and $T$ is large enough. Note that the notion of well tame \cite{DY2020cohomology} is strictly weaker than that of regular tame.

Finally, we note the following well known

\begin{prop}\label{estihdg}
	Denote $L_f=\nabla^2_{e_i,e_j}f[e^i\wedge,\iota_{e_j}]$ locally,
	where $\{e_i\}$ is a local frame on $TM$ and $\{e^i\}$ is the dual frame on $T^*M.$
	Then the Witten Laplacian  
	$\Delta_{H,Tf}$ has the
	following expression:
	\begin{equation}
	\Delta_{H,Tf}=\Delta-TL_f+T^2|\nabla f|^2.
	\end{equation}
	Here $\Delta$ denotes the Hodge Laplacian.
\end{prop}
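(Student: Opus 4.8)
The plan is to expand $\Delta_{H,Tf} = (d_{Tf}+\delta_{Tf})^2$ directly and compare with the undeformed Hodge Laplacian $\Delta = (d+\delta)^2$. First I would record that $d_{Tf} = d + Te$, where $e$ denotes exterior multiplication by $df$, i.e.\ $e = df\wedge$, and correspondingly $\delta_{Tf} = \delta + T\iota$, where $\iota = \iota_{\nabla f}$ is the contraction by the gradient vector field; this is immediate since $\iota_{\nabla f}$ is the formal adjoint of $df\wedge$ with respect to the metric $g$ (pointwise, $(df\wedge\phi,\psi)_g = (\phi,\iota_{\nabla f}\psi)_g$). Hence
\[
d_{Tf}+\delta_{Tf} = (d+\delta) + T(e+\iota).
\]
Squaring gives
\[
\Delta_{H,Tf} = (d+\delta)^2 + T\big[(d+\delta)(e+\iota) + (e+\iota)(d+\delta)\big] + T^2(e+\iota)^2.
\]

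Next I would evaluate the three groups of terms. The first is $\Delta$ by definition. For the $T^2$ term, note $e^2 = 0$ and $\iota^2 = 0$, so $(e+\iota)^2 = e\iota + \iota e = \{df\wedge, \iota_{\nabla f}\} = |\nabla f|^2 \cdot \mathrm{Id}$ by the standard Clifford/contraction anticommutation identity $\{\theta\wedge, \iota_X\} = \theta(X)$. This yields the $T^2|\nabla f|^2$ summand. The cross term at order $T$ is the one requiring care: I would compute $\{d, e\} + \{d,\iota\} + \{\delta, e\} + \{\delta,\iota\}$. Since $d(df\wedge\phi) = -df\wedge d\phi$ (because $d(df)=0$), we get $\{d,e\}=0$, and dually $\{\delta,\iota\}=0$. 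The surviving piece is $\{d,\iota_{\nabla f}\} + \{\delta, df\wedge\}$, which is a zeroth-order operator. To identify it, I would use the Cartan-type formula: for the Levi-Civita connection, $\{d, \iota_X\}$ acting on forms equals the Lie-derivative-type expression $L_X$ plus correction terms, and more precisely one has the pointwise identity $\{d,\iota_{\nabla f}\} + \{\delta, df\wedge\} = \nabla^2_{e_i,e_j}f\,[e^i\wedge, \iota_{e_j}] = L_f$ in an orthonormal frame; this is verified by expanding $d = e^i\wedge\nabla_{e_i}$ and $\delta = -\iota_{e_i}\nabla_{e_i}$ in a synchronous frame, using $\nabla(df) = \nabla^2 f$, and collecting the algebraic (non-derivative) terms while the first-order terms cancel. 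So the $T$-coefficient is $-TL_f$ once signs are tracked carefully.

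The main obstacle, and really the only nontrivial computational step, is pinning down the sign and the precise tensorial form of the zeroth-order $T$-term, i.e.\ verifying $\{d,\iota_{\nabla f}\} + \{\delta, df\wedge\} = -L_f$ (or $+L_f$, depending on conventions) with $L_f = \nabla^2_{e_i,e_j}f\,[e^i\wedge,\iota_{e_j}]$. The cleanest route is to work at a point $p$ in geodesic normal coordinates with $\nabla_{e_i}e_j(p)=0$, write out both anticommutators on a monomial $e^I$, and observe that all terms involving $\nabla$ of the form factor produce matching pairs that either cancel (the genuinely first-order pieces) or combine into the Hessian contracted against the commutator $[e^i\wedge,\iota_{e_j}]$. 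Since the proposition is labeled "well known," I would keep this verification brief, citing the analogous computation on compact manifolds (e.g.\ in Zhang's lectures \cite{zhang2001lectures} or Bismut--Zhang \cite{bismutzhang1992cm}), and simply note that the formula is local and hence transfers verbatim to the complete non-compact setting since all operators involved act fiberwise or as first-order differential operators with the same local expressions.
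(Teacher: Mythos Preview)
Your argument is correct and is precisely the standard direct computation. The paper gives no proof of this proposition at all---it is simply introduced as ``well known''---so there is nothing to compare against; your expansion of $(d_{Tf}+\delta_{Tf})^2$, the identification $(e+\iota)^2=|\nabla f|^2$, the vanishing of $\{d,e\}$ and $\{\delta,\iota\}$, and the synchronous-frame computation of the remaining cross term as the Hessian endomorphism are exactly what one finds in the references you cite (e.g.\ \cite{zhang2001lectures}, \cite{bismutzhang1992cm}). Your caution about the sign of the order-$T$ term is appropriate and harmless: the formula is purely local and the sign is a matter of convention that does not affect anything downstream.
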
	

\subsection{Weak Weyl Law for Witten Laplacian} \label{wwlfwl}

Let  $(M,g,f)$ be $\a$-polynomial tame defined in the previous section. Then,   $(M,g,f)$ is regular tame and there is some constant $C$, such that for all $\lambda \geq 0$, 
\[\int_{\{p\in M:|\nabla f|^2(p)\leq \lambda\}}(\lambda-|\nabla f|^2)^{n/2}dvol_M\leq C\lambda^\a. \] 
This has the following immediate consequences.

\begin{lem}\label{Klambda}
	Let $K_{\lambda}:=\{p\in M:|\nabla f|^2(p)<\lambda\},$ then 
	\[\Vol(K_\lambda)\leq C\lambda^{\a-\n}.\]
	Furthermore, for any $k\geq 0$, there is a constant $C_k$ depending only on $k$ and the tameness condition such that
	\[\int_M\exp(-|\nabla f|^2)|\nabla f|^k dvol \leq C_k . \]
\end{lem}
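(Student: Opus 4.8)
The plan is to extract both statements from the displayed $\alpha$-polynomial tame inequality, writing $V=|\nabla f|^2$ throughout, by a truncation argument for the volume bound and a layer-cake summation for the weighted integral.

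For the volume bound, the point is to convert the weighted estimate $\int_{\{V\le\mu\}}(\mu-V)^{n/2}\,dvol\le C\mu^\alpha$ into an unweighted one by inserting a buffer in the level. Applying the hypothesis with $\mu=2\lambda$: on $K_\lambda=\{V<\lambda\}$ one has $2\lambda-V>\lambda$, hence $(2\lambda-V)^{n/2}>\lambda^{n/2}$, so
\[
\lambda^{n/2}\,\Vol(K_\lambda)\le\int_{K_\lambda}(2\lambda-V)^{n/2}\,dvol\le\int_{\{V\le 2\lambda\}}(2\lambda-V)^{n/2}\,dvol\le C(2\lambda)^\alpha .
\]
Dividing by $\lambda^{n/2}$ gives $\Vol(K_\lambda)\le 2^\alpha C\,\lambda^{\alpha-n/2}$, which is the claimed bound after renaming the constant; in particular every $K_\lambda$ has finite volume, even when $M$ itself has infinite volume. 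The hypothesis $\alpha\ge n/2$ enters only to make the exponent nonnegative.

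For the second estimate, decompose $M$ into the integer shells $A_j:=\{j\le V<j+1\}$, $j\ge 0$, which cover $M$ since $V\ge 0$. On $A_j$ we have $e^{-V}\le e^{-j}$ and $|\nabla f|^k=V^{k/2}\le(j+1)^{k/2}$, while $A_j\subset K_{j+1}$ gives $\Vol(A_j)\le 2^\alpha C\,(j+1)^{\alpha-n/2}$ by the first part. Hence
\[
\int_M e^{-V}|\nabla f|^k\,dvol=\sum_{j=0}^{\infty}\int_{A_j}e^{-V}V^{k/2}\,dvol\le 2^\alpha C\sum_{j=0}^{\infty}e^{-j}(j+1)^{\,k/2+\alpha-n/2},
\]
and the series converges because the exponential factor dominates the polynomial one (the exponent $k/2+\alpha-n/2$ being nonnegative). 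Its sum is a constant $C_k$ depending only on $k,n,\alpha$ and the constant $C$ of the tameness hypothesis, i.e.\ only on $k$ and the tameness data, as asserted.

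The argument is routine; the only step that calls for a little care is the truncation in the first part, where one must pass to the level $2\lambda$ (any constant $>1$ works) before invoking the hypothesis, since on $K_\lambda$ itself the integrand $(\lambda-V)^{n/2}$ is not bounded below near $\{V=\lambda\}$. Everything else is bookkeeping, with all constants manifestly controlled by $C,\alpha,n,k$. Alternatively the second part could be phrased as a Stieltjes integration by parts against the distribution function $\mu(t)=\Vol(\{V<t\})$, using $\mu(t)\le 2^\alpha C\,t^{\alpha-n/2}$ and the rapid decay of $t\mapsto e^{-t}t^{k/2}$, but the shell decomposition avoids any discussion of boundary terms.
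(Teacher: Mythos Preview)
Your proof is correct and follows essentially the same route as the paper: the level-doubling trick $\mu=2\lambda$ to extract the volume bound, followed by the dyadic shell decomposition $\{j\le V<j+1\}$ and the exponential-versus-polynomial summation for the weighted integral. Your write-up is in fact slightly cleaner, making explicit why the buffer is needed and tracking the constant $2^\alpha$ that the paper silently absorbs.
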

\begin{proof} We have
	\begin{align*}
	\lambda^{\n}\Vol(K_\lambda)\leq \int_{K_{2\lambda }}(2\lambda-|\nabla f|^2)^{\n}\leq C\lambda^\a.
	\end{align*}
	To prove the second estimate, we notice that 
	\begin{align*} \int_M\exp(-|\nabla f|^2)|\nabla f|^k dvol & = \sum_{l=0}^{\infty } \int_{K_{l+1} - K_l}\exp(-|\nabla f|^2)|\nabla f|^{\frac{k}{2}} dvol \\
	& \leq \sum_{l=0}^{\infty } e^{-l}(l+1)^{\frac{k}{2}} \Vol(K_{l+1} - K_l)  \\
	& \leq C\sum_{l=0}^{\infty } e^{-l}(l+1)^{{\frac{k}{2}}+\a -\n}=C_k < \infty, 
	\end{align*}
	as desired.
\end{proof}

Note that, in particular,  if $\a=n/2$, then $M$ must have finite volume.

We now turn our attention to the growth of eigenvalues of the Witten Laplacian. First, by  refining the  argument of Theorem 1.1 in \cite{DY2020cohomology}, we have the following exponential decay estimate for eigenforms.
\begin{prop}\label{Agmon}
	Let $(M,g,f)$ be strongly tame, and $\o\in\Dom(\Box_{f})$ be an eigenform of $\Box_{f}$ with eigenvalue $\lambda$. Then
	\[|\o(p)|\leq C\exp(-a\rho_{\lambda}(p))\|\omega\|_{L^2},\]
	for any $a\in(0,1)$. Here $\rho_\lambda$ is the Agmon distance induced by Agmon metric $g_\lambda:=(|\nabla f|^2-\lambda)_+\, g$, with $(|\nabla f|^2-\lambda)_+$ denoting the nonnegative part, and $C$ is a constant independent of $\lambda.$
\end{prop}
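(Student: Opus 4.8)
The plan is to adapt the classical Agmon-type argument (as in \cite{agmon1982lectures} and as already carried out in \cite{DY2020cohomology}) to the eigenform setting, tracking the dependence of all constants on the eigenvalue $\lambda$. First I would recall the Bochner--Weitzenb\"ock type formula coming from Proposition \ref{estihdg}: for an eigenform $\o$ with $\Box_f\o=\lambda\o$, we have $(\Delta - L_f + |\nabla f|^2)\o=\lambda\o$, so that $\o$ satisfies $\Delta\o = (\lambda + L_f - |\nabla f|^2)\o$. The strategy is to test this equation against $e^{2a\phi}\o$ for a suitable Lipschitz weight $\phi$ approximating the Agmon distance $\rho_\lambda$ associated to the degenerate metric $g_\lambda=(|\nabla f|^2-\lambda)_+\,g$, integrate by parts, and absorb the cross terms. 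The key algebraic input is the eikonal-type inequality $|\nabla\rho_\lambda|^2\le (|\nabla f|^2-\lambda)_+$ satisfied (a.e.) by the Agmon distance by construction, which is what makes the zeroth-order term $|\nabla f|^2-\lambda-|\nabla\phi|^2\cdot(\text{const})$ have a favorable sign outside a compact set.

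The key steps, in order, would be: (i) Establish the weighted integral identity (Agmon's formula): for $\phi$ bounded Lipschitz,
\[
\int_M e^{2a\phi}\big(|\nabla \o|^2 + (|\nabla f|^2 - \lambda)|\o|^2\big)\,dvol = \int_M e^{2a\phi}\big(a^2|\nabla\phi|^2|\o|^2 + \langle L_f\o,\o\rangle\big)\,dvol + (\text{boundary/lower order}),
\]
obtained by pairing the eigenform equation with $e^{2a\phi}\o$ and moving one derivative. (ii) Choose $\phi=\phi_\epsilon$ to be a standard mollified, truncated version of $\rho_\lambda$ so that $|\nabla\phi_\epsilon|^2\le (|\nabla f|^2-\lambda)_+$; then the term $(|\nabla f|^2-\lambda) - a^2|\nabla\phi_\epsilon|^2 \ge (1-a^2)(|\nabla f|^2-\lambda)_+$ dominates, while on the compact region $K_\lambda$ where $|\nabla f|^2<\lambda$ one uses regular tameness to bound $L_f$ and the negative part of $|\nabla f|^2-\lambda$ by a constant (depending on $\lambda$) times $\|\o\|_{L^2}^2$. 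This yields an $L^2$ exponential decay bound $\int_M e^{2a\rho_\lambda}|\o|^2 \le C\|\o\|_{L^2}^2$ after letting $\epsilon\to 0$ and the truncation radius $\to\infty$, using bounded geometry to make the limiting arguments legitimate. (iii) Upgrade from $L^2$ decay to pointwise decay via elliptic regularity / a local Sobolev (or De Giorgi--Nash--Moser) estimate on balls of radius $\sim\tau$, which holds uniformly because of bounded geometry; since $\rho_\lambda$ is Lipschitz with controlled constant on such balls, $e^{a\rho_\lambda(p)}|\o(p)|\le C\,\|e^{a\rho_\lambda}\o\|_{L^2(B(p))}\le C\|\o\|_{L^2}$, giving the claimed estimate with $C$ independent of $\lambda$.

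The main obstacle I anticipate is controlling the first-order ``zeroth-order-in-disguise'' term $\langle L_f\o,\o\rangle$ together with the negative contribution of $-\lambda|\o|^2$ on the region $K_\lambda=\{|\nabla f|^2<\lambda\}$ \emph{uniformly in $\lambda$} — i.e.\ making sure the constant $C$ in the final bound genuinely does not depend on $\lambda$. The point is that $\rho_\lambda$ vanishes on $K_\lambda$ (there the Agmon metric degenerates to zero), so on that set the weight $e^{2a\rho_\lambda}\equiv 1$ and the ``bad'' terms contribute at most $(\sup_{K_\lambda}|L_f| + \lambda)\int_{K_\lambda}|\o|^2 \le (\sup_{K_\lambda}|L_f|+\lambda)\|\o\|_{L^2}^2$; one then moves this entire quantity to produce the decay estimate, and the $\lambda$-dependence is swallowed because we only claim decay relative to $\|\o\|_{L^2}$ and because outside $K_\lambda$ the coercive term $(1-a^2)(|\nabla f|^2-\lambda)$ is strictly positive. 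Making this absorption rigorous — in particular justifying the integration by parts and the limiting procedure in the noncompact setting, where one needs a priori that $\o$ and $\nabla\o$ decay fast enough for the boundary terms to vanish (this is exactly where the weaker exponential decay already proved in \cite{DY2020cohomology}, Theorem 1.1, is invoked as a bootstrap input) — is the technical heart of the argument; the rest is a refinement of the constants in that earlier proof.
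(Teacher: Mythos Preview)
Your overall approach---Agmon's weighted integration-by-parts identity followed by a local elliptic upgrade---is precisely what the paper intends; it gives no proof of its own, only the remark that one ``refines the argument of Theorem~1.1 in \cite{DY2020cohomology}''.

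There is, however, a real gap in your treatment of the $\lambda$-independence of $C$, which you correctly flag as the crux but do not actually resolve. Two concrete problems. First, in step~(ii), after moving the $K_\lambda$-contribution to the right you are left on the left with $\int_{M\setminus K_\lambda} e^{2a\rho_\lambda}(1-a^2)(|\nabla f|^2-\lambda)\,|\o|^2$, \emph{not} $\int e^{2a\rho_\lambda}|\o|^2$; the coercive factor $(|\nabla f|^2-\lambda)$ degenerates on $\partial K_\lambda$ and cannot simply be divided out. Worse, on the collar $K_{2\lambda}\setminus K_\lambda$ the term $e^{2a\rho_\lambda}|L_f|\,|\o|^2$ cannot be absorbed either: tameness only gives $|L_f|\lesssim|\nabla f|^{1+\kappa}$, which is not $\lesssim(|\nabla f|^2-\lambda)$ there, so that piece must also be thrown to the right---where the weight is no longer $\equiv 1$ and is of size $e^{2a\sup_{K_{2\lambda}}\rho_\lambda}$. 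Your sentence ``the $\lambda$-dependence is swallowed because we only claim decay relative to $\|\o\|_{L^2}$'' does not address either of these points. Second, in step~(iii) the local $L^\infty$--$L^2$ bound for solutions of $\Delta\o=(\lambda+L_f-|\nabla f|^2)\o$ has a Moser/Sobolev constant depending on the positive part of the zeroth-order coefficient, which on $K_\lambda$ is of order $\lambda$; bounded geometry alone does not make this uniform in $\lambda$.

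As written, your argument delivers only $C=C(\lambda)$ with explicit (at worst polynomial once one also bounds $\sup_{K_{2\lambda}}\rho_\lambda$ via the tameness computation used in the proof of Proposition~\ref{weyl}) growth. That weaker statement is in fact all the subsequent application in Proposition~\ref{weyl} needs, since any polynomial loss in $\lambda$ is absorbed by the factor $e^{-ac''\lambda^{(1-\kappa)/2}k^{(1-\kappa)/2}}$ appearing there; but if you want $C$ literally independent of $\lambda$ as stated, you must supply an additional mechanism (e.g.\ running the Agmon identity at an auxiliary exponent $a'\in(a,1)$ together with a quantitative upper bound on $\rho_\lambda$ over $K_{2\lambda}$, so that the slack $e^{2(a'-a)\rho_\lambda}$ eats the $\lambda$-dependent losses), and that mechanism is missing from your sketch.
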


With the help of Proposition \ref{Agmon}, we now deduce a weak version of Weyl's law:

\begin{prop}\label{weyl}
	If $(M,g,f)$ is $\a$-polynomial tame, then the spectrum of $\Box_{f}$ has polynomial growth. More precisely, there exist constants $\delta>0$ and  $C>0$, such that $\lambda_k(\Box_{f})\geq Ck^{\delta}$, where $\lambda_k(\Box_{f})$ denotes the $k$-th eigenvalue of $\Box_{f}$ (counted with multiplicity). Consequently, $\exp(-t\Box_{Tf})$ is of trace class for all $T>0,t>0.$
\end{prop}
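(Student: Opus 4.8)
The plan is to prove Proposition~\ref{weyl} by combining the pointwise exponential decay of eigenforms from Proposition~\ref{Agmon} with the volume bound $\Vol(K_\lambda)\leq C\lambda^{\a-\n}$ from Lemma~\ref{Klambda}, using a Dirichlet--Neumann bracketing (or min-max) argument to estimate the counting function $N(\lambda):=\#\{k:\lambda_k(\Box_f)\leq \lambda\}$. First I would fix $T=1$ (the case of general $T>0$ is identical after rescaling $f$, or follows since the spectrum of $\Box_{Tf}$ is comparable), and try to show $N(\lambda)\leq C\lambda^{\a}$ for $\lambda$ large, which immediately gives $\lambda_k\geq c k^{1/\a}$, i.e. $\delta=1/\a$ works.

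The key steps, in order: (1) Let $\o_1,\dots,\o_{N(\lambda)}$ be an $L^2$-orthonormal basis of eigenforms with eigenvalues $\leq \lambda$. By Proposition~\ref{Agmon} (or rather a uniform version of it valid for the span of eigenforms with eigenvalue at most $\lambda$), each such eigenform satisfies $|\o(p)|\leq C\exp(-a\rho_\lambda(p))\|\o\|_{L^2}$, where $\rho_\lambda$ is the Agmon distance for the metric $(|\nabla f|^2-\lambda)_+\,g$. (2) Integrate the pointwise trace identity: $N(\lambda)=\sum_j \|\o_j\|_{L^2}^2 = \int_M \sum_j |\o_j(p)|^2\,dvol$. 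Split $M = K_{2\lambda}\cup(M\setminus K_{2\lambda})$. On $K_{2\lambda}$ bound $\sum_j|\o_j(p)|^2$ by the on-diagonal heat-kernel/spectral-projector estimate — concretely, $\sum_j|\o_j(p)|^2 \leq e^{t\lambda}\,K_f(t,p,p)$ for any $t>0$, and the local parametrix bound (bounded geometry) gives $K_f(t,p,p)\leq Ct^{-\n}$ uniformly, so choosing $t=\lambda^{-1}$ yields $\sum_j|\o_j(p)|^2\leq C\lambda^{\n}$ on all of $M$; combined with $\Vol(K_{2\lambda})\leq C\lambda^{\a-\n}$ this contributes $\leq C\lambda^{\a}$. (3) On $M\setminus K_{2\lambda}$, where $|\nabla f|^2\geq 2\lambda$, the Agmon distance $\rho_\lambda(p)$ grows (roughly like the distance weighted by $\sqrt{|\nabla f|^2-\lambda}\geq \sqrt{\lambda}$), so $\sum_j|\o_j(p)|^2\leq C\lambda^{\n}\sum_j e^{-2a\rho_\lambda(p)}$; using that $N(\lambda)\leq C\lambda^{\n}\Vol(\{\rho_\lambda\leq 1\}) + (\text{exponentially small tail})$ and that $\{\rho_\lambda\leq 1\}\subset K_{C\lambda}$ for a suitable constant, this tail is also $O(\lambda^\a)$ (or negligible). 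Summing, $N(\lambda)\leq C\lambda^\a$. (4) Invert: $\lambda_k\geq Ck^{1/\a}$, take $\delta=1/\a$. (5) Conclude trace-class: $\Tr\exp(-t\Box_{Tf})=\sum_k e^{-t\lambda_k}\leq \sum_k e^{-tCk^\delta}<\infty$ for every $t>0$.

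The main obstacle I expect is step (2)--(3): making the passage from the \emph{single-eigenform} pointwise bound of Proposition~\ref{Agmon} to a \emph{uniform} bound on the spectral projector kernel $\sum_{\lambda_j\leq\lambda}|\o_j(p)|^2$, and controlling the $\lambda$-dependence of the constants carefully enough that the Agmon decay genuinely beats the volume growth outside $K_{2\lambda}$. One clean way to handle the uniformity is to avoid summing the individual bounds and instead work directly with the heat kernel: write $N(\lambda)\leq e^{t\lambda}\Tr\exp(-t\Box_f) = e^{t\lambda}\int_M \tr K_f(t,p,p)\,dvol$, establish the off-diagonal Gaussian-type bound $K_f(t,p,p)\leq Ct^{-\n}\exp(-ct|\nabla f|^2(p))$ (a crude version of Theorem~\ref{main-heatasym} at $T=1$, or directly from an Agmon-type estimate on the heat semigroup), and then integrate using the second part of Lemma~\ref{Klambda}: $\int_M t^{-\n}e^{-ct|\nabla f|^2}\,dvol \leq t^{-\n}(\text{const}\cdot t^{-\a}\ \text{from the tameness integral, after rescaling }\mu=\sqrt{t}|\nabla f|)$. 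Optimizing $t=1/\lambda$ then gives $N(\lambda)\leq e\cdot C\lambda^{\a}$ directly, bypassing the delicate projector estimate entirely. This is the route I would ultimately take; Proposition~\ref{Agmon} is then used precisely to justify the Gaussian decay factor $\exp(-ct|\nabla f|^2(p))$ in the on-diagonal heat kernel bound.
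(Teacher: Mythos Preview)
Your route is genuinely different from the paper's, and it is worth spelling out the contrast. The paper does \emph{not} go through the heat kernel at all. Instead it uses Proposition~\ref{Agmon} in the most direct way possible: for any unit eigenform $u$ with eigenvalue $\leq\lambda$, the Agmon decay together with a careful lower bound on $\rho_\lambda$ over the annuli $K_{(k+1)\lambda}\setminus K_{k\lambda}$ (obtained from the $\kappa$-regular tame condition) shows that there is a fixed $n_0$ with $\int_{M\setminus K_{n_0\lambda}}|u|^2<\tfrac12$ for all such $u$. This concentration, combined with a dimension-counting argument of Lawson--Michelsohn type (an $\epsilon$-net in $K_{n_0\lambda}$ plus an elliptic sup estimate $\sup_{K_{n_0\lambda}}|u|\leq \epsilon C_k(1+\lambda^k)$ forcing a contradiction if too many eigenforms vanish on the net), gives $E(\lambda)\leq C\,\Vol(K_{n_0\lambda})/\epsilon^n\leq C\lambda^{\frac{n}{2}\alpha+\alpha+n}$. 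The argument is entirely elementary and self-contained at this point of the paper: it uses only Proposition~\ref{Agmon}, Lemma~\ref{Klambda}, volume comparison, and standard local elliptic theory.

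Your preferred route (the heat-trace inequality $N(\lambda)\leq e^{t\lambda}\int_M K_f(t,p,p)\,dvol$ with $t=\lambda^{-1}$) is cleaner and yields the sharper exponent $N(\lambda)\leq C\lambda^{\alpha}$, but it rests on the uniform on-diagonal bound $K_f(t,p,p)\leq Ct^{-n/2}\exp(-ct|\nabla f|^2(p))$, and this is exactly where the weight of the argument lies. That bound is \emph{not} a consequence of Proposition~\ref{Agmon} (which controls individual eigenforms, not the projector kernel or the heat kernel), and your phrase ``an Agmon-type estimate on the heat semigroup'' hides real work: obtaining the factor $e^{-ct|\nabla f|^2(p)}$ uniformly in $p$ and in small $t$ is essentially the content of Theorem~\ref{main-heatasym}/Lemma~\ref{effbound}, i.e.\ the parabolic-distance machinery of Sections~\ref{expansion}--\ref{heatker}. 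Those sections do not logically depend on Proposition~\ref{weyl}, so your argument is not circular, but it does invert the paper's order and imports its heaviest analytic input into what the authors prove by much lighter means. If you want to keep your strategy, you should either reorganize (prove the pointwise heat-kernel estimate first) or supply an independent proof of the on-diagonal decay; as written, that step is asserted rather than established.
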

\begin{proof}
	Let $E(\lambda)$ be the number of eigenvalues not exceeding $\lambda$, and $u$ be an eigenform with eigenvalue $\lambda_0\leq \lambda$. We normalize $u$ so that $\|u\|_{L^2}=1$ .
	
	By Proposition \ref{Agmon}, 
	$$
	|u(p)|\leq C\exp(-a\rho_{\lambda}(p)).
	$$
	We claim that there exists $n_0>1$ independent of $\lambda \geq 1$ and $u$, such that  
	\begin{equation}\label{noncpt}\int_{M-K_{n_0\lambda}}|u|^2dvol <\frac{1}{2}.\end{equation}
	
	To prove the claim we first estimate the Agmon distance. Thus,  for $p\in K_{(k+1)\l}-K_{k\l}$, let $\gamma:[0,l]\mapsto M$ be a minimal curve in the Agmon metric  $g_\l$ connecting $p$ and $K_{k\l}$; moreover, $|\gamma'(s)|=1$ with respect to the  metric $g.$
	Then we may as well assume that $\gamma\subset K_{(k+1)\l}$; otherwise, we can find $l_0\in [0,l]$, such that $\gamma|_{[l_0,l]}\subset K_{(k+1)\l}$ and we can take $p=\gamma(l_0)$. Hence by the tameness condition, there exists $c>0$, s.t. 

	\[ \frac{d}{dt}(|\nabla f|^2\circ \gamma(t))\leq c|\nabla f|^{\k+2}\leq c((k+1)\lambda)^{\frac{\k+2}{2}}.\]
	
	It follows by integrating that  $l\geq\frac{|\nabla f|^2(p)-k\l}{((k+1)\lambda)^{\frac{\k+2}{2}}}.$
	In particular, if $L$ is the  $g_\l$-length of $\gamma$ such that that $|\nabla f|^2(p)=(k+1)\l,$ then for some $c'>0$
	
	\[L= \int_0^l(|\nabla f|^2-\l)^{\half}\circ\gamma(t)dt\geq 
	\frac{(k-1)^\half \l^{\frac{1-\k}{2}}}{(k+1)^{\frac{\k+2}{2}}}\geq \frac{c'\l^{\frac{1-\k}{2}}}{k^{\frac{\k+1}{2}}}.\]
	
	Hence, if $x\in K_{(k+1)\l}-K_{k\l}$, then (say $k\geq 3$)
	\[\rho_\l(x)\geq \sum_{i=2}^{k-1}\frac{c'\l^{\frac{1-\k}{2}}}{{i}^{\frac{\k+1}{2}}}\geq c''\l^{\frac{1-\k}{2}}k^{\frac{1-\k}{2}} \]
for some 
constant $c''>0$.
	
Therefore, 
	\begin{align*}
	\int_{M-K_{n_0\l}}|u|^2dvol&=\sum_{k=n_0}^\infty\int_{K_{(k+1)\l-K_{k\l}}}|u|^2dvol\\
	&\leq \sum_{k=n_0}^\infty\int_{K_{(k+1)\l-K_{k\l}}}Ce^{-a\rho_\l}dvol\\
	&\leq \sum_{k=n_0}^\infty Ce^{-ac''\l^{\frac{1-\k}{2}} k^{\frac{1-\k}{2}}} \Vol(K_{(k+1)\l})\\
	&\leq\sum_{k=n_0}^\infty C_1 e^{-ac''\l^{\frac{1-\k}{2}} k^{\frac{1-\k}{2}}} ((k+1)\l)^{\a-\n}\\
	&\leq\sum_{k=n_0}^\infty C_2 C'e^{-\frac{1}{2}ac''\l^{\frac{1-\k}{2}} k^{\frac{1-\k}{2}}} \leq\sum_{k=n_0}^\infty C_2C'e^{-\frac{1}{2}ac''k^{\frac{1-\k}{2}}}
	\end{align*}
for $\lambda \geq 1$. Here $C'=\max_{\eta>0} \eta^{\frac{2\a-n}{1-\k}}e^{-\frac{1}{2}ac''\eta}$.  Clearly there is some $n_0$ such that the last term in the inequality above is less than $1/2$, 	which finishes the proof of the claim.
	
	Let $N(\ep,\lambda)$ be the minimal number of elements in an $\ep$-dense subset of $K_{n_0\lambda}$. Then by the volume comparison, $N(\ep,\lambda)\leq C_1\frac{\Vol(K_{n_0\lambda})}{\ep^{n}}$. 
	We now follow the argument in the proof of Theorem 5.8 of \cite{lawson2016spin} to show that $E(\lambda)\leq N(\ep,\lambda)$ for suitable $\ep$.  Indeed,  if $E(\lambda)>N(\ep,\lambda)$, then there exists $u\in E(\lambda)$ with unit $L^2$ norm   which vanishes on an $\ep$-dense subset of $K_{n_0\lambda}$. By using the elliptic estimate as in \cite{lawson2016spin} one deduces
	\[\sup_{K_{n_0\lambda}}|u|\leq \ep C_k(1+\lambda^k) \]
	for any $2k>\n+1.$ But this is clearly impossible if we take $\ep^{-1}:=2C_k(1+\lambda^k)\Vol(K_{n_o\lambda})^{1/2}$, as 
	$$
	\int_{K_{n_0\l}}|u|^2dvol > 1/2.
	$$. 
	
	As a result, if we choose the minimal $k$, s.t. $2k>\n+1$, then by Lemma \ref{Klambda},
	\[E(\lambda)\leq N(\ep,\lambda)\leq C_1\frac{\Vol(K_{n_0\lambda})}{\ep^{n}}\leq C\lambda^{\n \a+\a+n}.\]
	The rest of the proposition follows.
\end{proof}
\begin{rem}
	The $\a$-polynomial tame condition is a technical one for the usual heat kernel approach to local index theorems. For example, on $\R$ consider $f=|x|\ln|x|$  outside $|x|\leq e$.  Let $\lambda_k$ be the $k$-th eigenvalue of $\Box_{f}$. Then by Weyl's law (Cf. \cite{tachizawa1992eigenvalue}), $\lambda_k\lesssim \sqrt{\ln(k)}$. For such slowly growing eigenvalue distributions, it is unreasonable to consider the limit $\lim_{t\to0}Tr_s(\exp(-t\Box_{f}))$.  On the other hand, this assumption is not essential if one is only interested in an index formula. This issue will be elaborated in a separate paper when we discuss the Dirac/Callias type operators.
\end{rem}
	Thus, assuming the $\a$-polynomial tame condition,  $\exp(-t\Box_{Tf})$ is of  trace class. 
	It follows that
	 \begin{equation} \label{stohk}
	 	h(t,T)=\Tr_s(\exp(-t\Box_{Tf}))
	 \end{equation}
	  is independent of $t.$  Moreover,  as $t\to \infty$, $h(t,T)\to \chi(M,d_{Tf}),$ where
	\[\chi(M,d_{Tf})=\sum_{i=0}^n (-1)^ib_i(T),b_i=\dim(H^i_{(2)}(M,d_{Tf})).\]
	
	Now by Theorem 1.3 in \cite{DY2020cohomology}, $h(t,T)$ is independent of $T>0$. As a result, $h(t,T)$ is independent of both $t>0$ and $T>0$. 

\section{Construction of Parametrix}\label{expansion} 

In this section, we extend the parametrix construction of the heat kernel to the Witten deformation. The case of Euclidean space is treated in \cite{fanfang2016torsion}.

Fix $x\in M$, and let $d(y,x)$ be the distance function. Let $\tau>0$ be the injectivity radius of $M.$ Then for $y\in B_{\tau}(x)$, define
\begin{equation} \label{ehk}
\e_0(t,x,y)=\frac{1}{(4\pi t)^{\n}}\exp(-d^2(x,y)/4t).
\end{equation}
 For simplicity, we denote $V_T=T^2|\nabla f|^2$ and $V=|\nabla f|^2$. Suppose $\gamma$ is the normal geodesic connecting $x$ and $y$, and $r_x(y)=d(x, y)$. Set \begin{equation} \label{hT} h_T(x,y)=\frac{1}{r_{x}(y)}\int_0^{\r(y)}V_T(\gamma(s))ds= T^2 h(x, y), \ \ \ h(x,y)=\frac{1}{r_{x}(y)}\int_0^{\r(y)}V(\gamma(s))ds .
 \end{equation}  We define
 \begin{equation} \label{cthk}
 \e_{1,T}(t,x,y)=\exp(-t\, h_T(x,y)).
 \end{equation} 

Then direct computation gives us the following formulas (the first two are well known). 
\begin{prop}\label{e0}
For $y\in B_\tau(x)$ in the  normal coordinates near $x$, we have
\[\nabla \e_0=-\frac{\e_0}{2t}r\nabla r,\ \ \ (\pat +\Delta)\e_0=\frac{\e_0}{4tG}\nabla_{r\nabla r}G.\]
\[\nabla_{r\nabla r}h_T(x,y)+h_T(x,y)-V_T(y)=0.\]
Here $G=\det(g_{ij})$ and derivatives are  taken with respect to $y.$
\end{prop}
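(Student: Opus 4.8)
The plan is to verify the three identities by direct computation in geodesic normal coordinates centred at $x$, where $d(x,y)=r=r_x(y)=|y|$ and the Gauss lemma provides a clean description of the radial vector field. The preliminary observation, used throughout, is that in these coordinates $r\nabla r$ is exactly the Euler (radial) vector field $\sum_i y_i\partial_{y_i}$: since $\partial_j r=y_j/r$, we get $\nabla r=g^{ij}(y)(y_j/r)\partial_i=\frac1r y_i\partial_i$ using $g^{ij}(y)y_j=y_i$, hence $|\nabla r|=1$, $r\nabla r=y_i\partial_i$, and $\nabla_{r\nabla r}G=\sum_i y_i\partial_i G$.

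The first identity is immediate: differentiating $\e_0=(4\pi t)^{-\n}\exp(-r^2/4t)$ with $\nabla(r^2)=2r\nabla r$ gives $\nabla\e_0=-\frac{\e_0}{2t}r\nabla r$. For the second, I would compute the two terms separately. On the one hand $\pat\e_0=\e_0\left(-\frac{n}{2t}+\frac{r^2}{4t^2}\right)$. On the other hand, writing $\Delta=d^*d=-\operatorname{div}\nabla$ on functions, $\Delta\e_0=-\operatorname{div}(\nabla\e_0)=\frac{1}{2t}\operatorname{div}(\e_0\, r\nabla r)$, and the Leibniz rule gives $\operatorname{div}(\e_0\, r\nabla r)=\langle\nabla\e_0,r\nabla r\rangle+\e_0\operatorname{div}(r\nabla r)$. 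The first summand is $-\frac{\e_0 r^2}{2t}$ because $|\nabla r|=1$; for the second, $r\nabla r=\half\nabla(r^2)$ and $\frac{1}{\sqrt G}\partial_i(\sqrt G\, g^{ij}\partial_j r^2)=\frac{1}{\sqrt G}\partial_i(2\sqrt G\, y_i)=2n+\frac1G\nabla_{r\nabla r}G$, so $\operatorname{div}(r\nabla r)=n+\frac{1}{2G}\nabla_{r\nabla r}G$. Assembling, $\Delta\e_0=-\frac{\e_0 r^2}{4t^2}+\frac{n\e_0}{2t}+\frac{\e_0}{4tG}\nabla_{r\nabla r}G$, and adding $\pat\e_0$ the terms $\pm\frac{\e_0 r^2}{4t^2}$ and $\pm\frac{n\e_0}{2t}$ cancel, leaving $(\pat+\Delta)\e_0=\frac{\e_0}{4tG}\nabla_{r\nabla r}G$.

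For the third identity, fix the unit-speed geodesic $\gamma=\gamma_v$ from $x$ with $y=\gamma(r)$ and set $\Phi(r)=\int_0^r V_T(\gamma(s))\,ds$, so that $h_T(x,\gamma(r))=\Phi(r)/r$. Since the flow of $r\nabla r$ along the radial direction at $y$ is precisely $r\mapsto\gamma(r)$ (extension of the geodesic), $\nabla_{r\nabla r}h_T=r\frac{d}{dr}\!\left(\Phi(r)/r\right)=r\cdot\frac{V_T(y)\,r-\Phi(r)}{r^2}=V_T(y)-h_T(x,y)$, which is exactly the claimed relation $\nabla_{r\nabla r}h_T+h_T-V_T(y)=0$.

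No step here is a genuine obstacle; the computations are one-line chain-rule manipulations once the radial field is identified. The only points demanding care are the sign/normalization convention for $\Delta$ (the nonnegative Hodge Laplacian $-\operatorname{div}\nabla$ on functions, as in Proposition \ref{estihdg}) and the correct invocation of the Gauss lemma to pass from $\nabla r$ to the Euler vector field, after which all three formulas follow directly.
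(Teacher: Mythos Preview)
Your proposal is correct and follows exactly the route the paper indicates: the paper simply states that ``direct computation gives us the following formulas (the first two are well known)'' without supplying details, and your argument fills these in via the standard Gauss-lemma identification $r\nabla r=y^i\partial_i$ in normal coordinates together with the chain rule. There is no alternative approach to compare; you have written out precisely the omitted computation.
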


Let $p_i:M\times M\mapsto M$ be the projection of i-th factor of $M\times M$ to $M, \ i=1,2$. We define the vector bundle $E\to M\times M$ to be
$E=(p_1)^*(\Lambda^*(M))\otimes (p_2)^*(\Lambda^*(M))$.
Let $s(t,x,y)=\sum_{i=0}^kt^i\t_i(x,y),$ where $\t_i(x,y)\in\Gamma(E)$. 
Since $y$ is within the injectivity radius of $x$, we use parallel transport along radius geodesics to identify $\Lambda^*_y(M)$ with $\Lambda^*_x(M)$. In this way, $\t_i(x,y)\in\Gamma(E)$ is identified with an endomorphismm  of $\Lambda^*(M)$ using the metric.  Again by a straightforward computation and using Proposition \ref{e0}, we have
\begin{prop}
\begin{flalign}
\begin{split}\label{asym}
(&\pat+\Box_{Tf})(\e_0\e_{1,T}s)\\
&=\e_0\e_{1,T}\Big\{\sum_{j=-1}^{k-1}[(j+1+\frac{1}{4G}\nabla_{r\nabla r}G)\t_{j+1} + \nabla_{r\nabla r}\t_{j+1} + \Delta \t_j-TL_f \t_j]t^j\\
& + [\Delta \t_k-TL_f \t_k]t^k
+\sum_{j=1}^{k+1}[-\Delta h_T\t_{j-1}+2\nabla_{\nabla h_T} \t_{j-1}]t^j\\
&+\sum_{j=2}^{k+2}[-|\nabla h_T|^2\t_{j-2}]t^j\Big\},
\end{split}
\end{flalign}
where 
the derivatives are taken with respect to $y.$
\end{prop}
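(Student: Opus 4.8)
The plan is to reduce everything to repeated applications of the Leibniz rule, using Proposition~\ref{estihdg} to rewrite $\Box_{Tf}$ and Proposition~\ref{e0} to supply the identities for $\e_0$ and $h_T$. First I would invoke Proposition~\ref{estihdg}: within the injectivity radius of $x$ and acting on smooth sections of $E$, $\Box_{Tf}=\Delta-TL_f+V_T$, where $\Delta$ is the induced (Bochner/Hodge) Laplacian on $E$, $L_f$ is the bundle endomorphism $\nabla^2_{e_i,e_j}f[e^i\wedge,\iota_{e_j}]$, and $V_T=T^2|\nabla f|^2$ is multiplication by a scalar. Since $L_f$ and $V_T$ are zeroth order, they commute with multiplication by the scalar $\e_0\e_{1,T}$, so it suffices to compute $(\pat+\Delta)(\e_0\e_{1,T}s)$ and then add $\e_0\e_{1,T}(-TL_f+V_T)s$ at the end. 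Here $\t_i(x,y)$ and hence $s$ are regarded as sections of $E$ trivialized by parallel transport along radial geodesics, so that $\nabla_{r\nabla r}\t_i$ and $\Delta\t_i$ make sense.

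Next I would treat $\phi:=\e_0\e_{1,T}$ as a scalar function and apply the Leibniz rule $(\pat+\Delta)(\phi s)=\big((\pat+\Delta)\phi\big)s-2\nabla_{\nabla\phi}s+\phi(\pat+\Delta)s$. By Proposition~\ref{e0} and \eqref{cthk}, $\nabla\phi=(\nabla\e_0)\e_{1,T}+\e_0\nabla\e_{1,T}=-\tfrac{\e_0\e_{1,T}}{2t}\,r\nabla r-t\,\e_0\e_{1,T}\nabla h_T$, so $-2\nabla_{\nabla\phi}s=\e_0\e_{1,T}\big(\tfrac1t\nabla_{r\nabla r}s+2t\nabla_{\nabla h_T}s\big)$. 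For $(\pat+\Delta)\phi$ I would use the Leibniz rule once more, $(\pat+\Delta)(\e_0\e_{1,T})=\big((\pat+\Delta)\e_0\big)\e_{1,T}+\e_0\big((\pat+\Delta)\e_{1,T}\big)-2\langle\nabla\e_0,\nabla\e_{1,T}\rangle$, and then insert: $(\pat+\Delta)\e_0=\tfrac{\e_0}{4tG}\nabla_{r\nabla r}G$ from Proposition~\ref{e0}; the elementary identities $\pat\e_{1,T}=-h_T\e_{1,T}$ and $\Delta\e_{1,T}=\e_{1,T}\big(-t\Delta h_T-t^2|\nabla h_T|^2\big)$; and the cross term $-2\langle\nabla\e_0,\nabla\e_{1,T}\rangle=-\e_0\e_{1,T}\nabla_{r\nabla r}h_T$. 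At this point the crucial input is the transport ODE $\nabla_{r\nabla r}h_T+h_T-V_T=0$ from Proposition~\ref{e0}, which is exactly what motivates the choice $\e_{1,T}=\exp(-t\,h_T)$: it converts $-\nabla_{r\nabla r}h_T$ into $h_T-V_T$, so all $h_T$-terms cancel and one obtains $(\pat+\Delta)(\e_0\e_{1,T})=\e_0\e_{1,T}\big[\tfrac{1}{4tG}\nabla_{r\nabla r}G-t\Delta h_T-t^2|\nabla h_T|^2-V_T\big]$. Combining the three pieces and adding $\e_0\e_{1,T}(-TL_f+V_T)s$, the $-V_Ts$ is cancelled by $+V_Ts$, leaving
\begin{align*}
(\pat+\Box_{Tf})(\e_0\e_{1,T}s)=\e_0\e_{1,T}\Big\{&\pat s+\tfrac1t\nabla_{r\nabla r}s+\tfrac{1}{4tG}(\nabla_{r\nabla r}G)\,s+\Delta s-TL_f s\\
&-t(\Delta h_T)s+2t\nabla_{\nabla h_T}s-t^2|\nabla h_T|^2 s\Big\}.
\end{align*}

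Finally I would substitute $s=\sum_{j=0}^{k}t^j\t_j$ and read off the coefficient of each power of $t$. The operator $\pat+\tfrac1t\nabla_{r\nabla r}+\tfrac1{4tG}\nabla_{r\nabla r}G$ lowers the $t$-degree by one, producing at order $t^j$ (for $j=-1,\dots,k-1$) the term $\big(j+1+\tfrac{1}{4G}\nabla_{r\nabla r}G\big)\t_{j+1}+\nabla_{r\nabla r}\t_{j+1}$; the operator $\Delta-TL_f$ contributes $\Delta\t_j-TL_f\t_j$ at orders $j=0,\dots,k$; the operator $-t\Delta h_T+2t\nabla_{\nabla h_T}$ raises the degree by one, contributing $-\Delta h_T\,\t_{j-1}+2\nabla_{\nabla h_T}\t_{j-1}$ at orders $j=1,\dots,k+1$; and $-t^2|\nabla h_T|^2$ raises the degree by two, contributing $-|\nabla h_T|^2\t_{j-2}$ at orders $j=2,\dots,k+2$. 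Assembling these, and separating the $t^k$ term of the $\Delta-TL_f$ sum (which has no partner in the degree-lowering sum), yields exactly \eqref{asym}. I do not expect a genuine obstacle here: the computation is routine, and the only points demanding care are the sign conventions in the Leibniz rule for $\Delta$ on sections of $E$, the legitimacy of differentiating the radially trivialized $\t_j$, and the bookkeeping of the index shifts — with the one substantive ingredient being the transport ODE for $h_T$ from Proposition~\ref{e0}, without which the potential $V_T$ would fail to cancel.
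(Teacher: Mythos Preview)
Your proposal is correct and follows exactly the approach the paper indicates: the paper states only that the identity follows ``by a straightforward computation and using Proposition~\ref{e0}'', and your argument supplies precisely that computation, with the transport ODE for $h_T$ from Proposition~\ref{e0} producing the cancellation of $V_T$ as you emphasize. There is nothing to add.
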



Now we can follow the standard procedure  to find suitable $\t_j=\t_{T,j}$ with $\t_{T,0}(x,x)=\operatorname{Id}$, $j=0,1,...,k,$ such that 
\begin{equation} \label{parametrix}
(\pat+\Box_{Tf})(\e_0\e_1s)=t^kR_{k,T}(t,x,y),
\end{equation}
 where $R_{k,T}(t,x,y)$ is $C^0$ in $t\in[0,\infty)$.
This amounts  to solving ODEs inductively.

For $j=-1$, we have
$\dr(G^{1/4 }\t_{T,0})=0$. Together with the initial condition  $\t_{T,0}(x,x)=\operatorname{Id}$, one has 
$\t_{T,0}=G^{-1/4}\operatorname{Id}$.

For $j=0$, we have
$\dr(rG^{1/4 }\t_{T,1})=G^{1/4}(TL_f-\Delta) \t_{T,0}$; hence we can solve $\t_1$ explicitly in terms of $\t_{T,0}$, by integrating along the geodesic.

Similarly, for $1\leq j\leq k-1$, $\t_{T,j+1}$ can be solved recursively from the equation 
\begin{flalign*}
\dr(r^{j+1}G^{1/4}\t_{T,j+1})&=-r^{j}G^{1/4}(\Delta\t_{T,j}-TL_f \t_{T,j}-\Delta h_T\t_{T,j-1}\\
&+2\nabla_{\nabla h_T} \t_{T,j-1}-|\nabla h_T|^2\t_{T,j-2}).
\end{flalign*}
With these choices for  $\t_{T,j}$'s, we obtain (\ref{parametrix}), where
\begin{flalign} \label{asyrem}
R_{k, T}&=\e_0\e_1\Big\{[\Delta \t_{T,k}-TL_f \t_{T,k}-\Delta h_T\t_{T,k-1}+2\nabla_{\nabla h_T} \t_{T,k-1}-|\nabla h_T|^2\t_{T,k-2}]  \nonumber \\ 
&+[-\Delta h_T\t_{T,k}+2\nabla_{\nabla h_T} \t_{T,k}-|\nabla h_T|^2\t_{T,k-1}]t+[-|\nabla h_T|^2\t_{T,k}]t^{2}\Big\}
\end{flalign}

The following proposition follows from the above construction via an argument of induction, using the $\kappa$-regular tame condition.
\begin{prop}\label{asyexp}
Each $\t_{T,j}$ can be written as a polynomial of $T$:
\[\t_{T,j}(x,y)=\sum_{l=0}^{[\frac{j}{3}]+j}T^l\t_{l,j}(x,y),\]
where $\t_{l,j}$ is independent of $T$, $[a]$ denotes the integral part of a real number $a$. Moreover 
\[ |\t_{T,j}(x,y)|\leq C(\bar{V}_\gamma)^{\k' j}T^{[\frac{j}{3}]+j}, \]
where $\k'=\frac{\k+2}{3}$, $\bar{V}_{\gamma}=\sup_{p\in\gamma} |V(p)|$, $\gamma$ is the shortest geodesic connecting $x$ and $y.$
When restricted to the diagonal of $M\times M$, $\t_{T,j}(y,y)$ can be written as an algebraic combination the curvature of the metric $g$, the function $f$, as well as their derivatives, at $y$; in addition, $\t_{T,0}(y,y)=\operatorname{Id}.$
\end{prop}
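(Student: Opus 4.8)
The plan is to prove Proposition \ref{asyexp} by induction on $j$, tracking simultaneously the polynomial structure in $T$, the power of $T$ appearing, and the growth bound in terms of $\bar V_\gamma$. The recursive ODEs derived above express $r^{j+1}G^{1/4}\t_{T,j+1}$ as the integral along the geodesic of a combination of $\Delta\t_{T,j}$, $TL_f\t_{T,j}$, $\Delta h_T\,\t_{T,j-1}$, $\nabla_{\nabla h_T}\t_{T,j-1}$, and $|\nabla h_T|^2\t_{T,j-2}$, with base cases $\t_{T,0}=G^{-1/4}\Id$ (so $l=0$, no $T$, and trivially bounded) and $\t_{T,1}$ obtained from $G^{1/4}(TL_f-\Delta)\t_{T,0}$ (which introduces exactly one power of $T$ via the $L_f$ term, matching $[\tfrac13]+1=1$). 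So the induction hypothesis at level $j$ is: $\t_{T,j}=\sum_{l=0}^{[j/3]+j}T^l\t_{l,j}$ with $\t_{l,j}$ independent of $T$, and $|\t_{T,j}|\le C(\bar V_\gamma)^{\k'j}T^{[j/3]+j}$ on the relevant range of $T$.

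First I would examine how each term on the right-hand side of the recursion for $\t_{T,j+1}$ contributes. The term $\Delta\t_{T,j}$ is a second-order operator in $y$ applied to $\t_{T,j}$; differentiating the metric/curvature factors is harmless, but one must check it does not raise the $T$-degree — it does not, since $\Delta$ has no $T$ in it — and that it costs at most two extra factors of $|V|^{1/2}$ worth of growth, which is where the $\k$-regular tame bound $\limsup |\nabla^k V|/|V|^{(k\k+2)/2}<\infty$ (reformulated right after the definition of $\k$-regular tame) enters: each covariant derivative of $V$ is controlled by a fractional power of $V$, and the exponent bookkeeping is arranged precisely so the growth exponent goes from $\k'j$ to $\k'(j+1)$ after dividing by $r^{j+1}$ and integrating. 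The term $TL_f\t_{T,j}$ raises the $T$-degree by one and, since $L_f$ involves $\nabla^2 f$, contributes a factor bounded by $|\nabla^2 f|\lesssim |\nabla f|^{\k+1}=V^{(\k+1)/2}$. The terms involving $h_T$ require noticing $h_T=T^2 h$, so $\Delta h_T$ and $\nabla_{\nabla h_T}$ carry $T^2$ and $|\nabla h_T|^2$ carries $T^4$; combined with the lower index ($j-1$ or $j-2$) in the induction hypothesis, one checks $2+([\tfrac{j-1}{3}]+j-1)\le [\tfrac{j+1}{3}]+j+1$ and $4+([\tfrac{j-2}{3}]+j-2)\le[\tfrac{j+1}{3}]+j+1$, which hold because $[\tfrac{j+1}{3}]\ge[\tfrac{j-2}{3}]+1$. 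Similarly one verifies the growth exponents: $h$ and its derivatives are averages of $V$ and its derivatives along the geodesic, hence bounded by the corresponding powers of $\bar V_\gamma$, and the arithmetic of exponents again closes at $\k'(j+1)$.

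For the integration step itself, I would use that $\dr(r^{j+1}G^{1/4}\t_{T,j+1})$ equals $r^j$ times a bounded-by-$C(\bar V_\gamma)^{\k'(j+1)}r$-independent-exponent quantity (the extra $r$ coming from the fact that the RHS at level $j$ has one more factor to absorb), so that integrating from $0$ to $r$ and dividing by $r^{j+1}G^{1/4}$ produces a bound of the claimed form, with the $r^{j+1}$ in the denominator exactly cancelled by $r^{j+1}$ from the primitive — this is the standard mechanism that keeps $\t_{T,j+1}$ smooth up to $r=0$. The restriction-to-the-diagonal statement follows by setting $y=x$: then $r=0$, $h_T(x,x)=V_T(x)$, the geodesic degenerates, and the ODE solutions evaluated at $r=0$ are computed by Taylor expansion of the integrands at $x$, which manifestly gives polynomial expressions in the Taylor coefficients of $g$ and $f$ at $x$, i.e., algebraic combinations of curvature, $f$, and their derivatives; and $\t_{T,0}(x,x)=G^{-1/4}(x)\Id=\Id$ since $G(x)=1$ in normal coordinates.

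The main obstacle I expect is the exponent bookkeeping — verifying that the three arithmetic inequalities among $[\tfrac{j}{3}]$, $[\tfrac{j-1}{3}]$, $[\tfrac{j-2}{3}]$, $[\tfrac{j+1}{3}]$ (for the $T$-degree) and the parallel inequalities among $\k'j$, $\k'(j-1)$, $\k'(j-2)$, $\k'(j+1)$ adjusted by the $+2$, $+2$, $+4$ (for the growth, using $\k'=\tfrac{\k+2}{3}$ and the tameness exponent $(k\k+2)/2$) really do close the induction in every case. The delicate point is that $\Delta\t_{T,j}$ does not raise $T$-degree yet is the term that would most naturally threaten the growth bound, so the choice $\k'=\tfrac{\k+2}{3}$ has to be exactly right; I would verify the base cases $j=0,1,2$ by hand and then present the generic inductive step with the inequalities stated explicitly. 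Everything else is routine differentiation of the parametrix recursion and integration along geodesics.
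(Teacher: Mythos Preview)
Your approach by induction on $j$ is exactly what the paper indicates (the paper's own proof is the single sentence ``follows from the above construction via an argument of induction, using the $\kappa$-regular tame condition''), and your identification of the arithmetic inequalities on $[\tfrac{j}{3}]$, $[\tfrac{j-1}{3}]$, $[\tfrac{j-2}{3}]$, $[\tfrac{j+1}{3}]$ and on the $\bar V_\gamma$ exponents---including the tight cases $[\tfrac{j+1}{3}]=[\tfrac{j-2}{3}]+1$ and $\k+2=3\k'$---is correct.

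One refinement you should make explicit in the writeup: since the recursion feeds $\Delta\t_{T,j}$ and $\nabla_{\nabla h_T}\t_{T,j-1}$ into the next step, the induction hypothesis as you state it (a pointwise bound only on $|\t_{T,j}|$) is not literally self-reproducing---you cannot pass from a size bound on $\t_{T,j}$ to one on $\Delta\t_{T,j}$. Your discussion already points to the right fix (you invoke the tame bound on derivatives of $V$ when handling $\Delta\t_{T,j}$, which presupposes knowing the algebraic constituents of $\t_{T,j}$, not just its size), so either strengthen the hypothesis to include $|\nabla^m\t_{T,j}|\le C_m(\bar V_\gamma)^{\k'j+m\k/2}T^{[j/3]+j}$ for $m=0,1,2$, or carry along the finer structural statement that each $\t_{l,j}$ is an explicit polynomial in covariant derivatives of $f$ and curvature and bound termwise via the tame condition. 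Either way the induction then closes as you describe.
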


Let $\eta\in C_c^\infty(\R)$ be a bump function, such that the support of $\eta$ is contained in $[-1,1],$ and $\eta|_{[-\half,\half]}\equiv1.$ 
Let $\phi\in C^\infty(M\times M)$ be defined as
\begin{equation} \label{cutoff}
\phi(x,y)=\eta(d^2(x,y)/\tau^2). 
\end{equation}

\begin{prop}\label{asym}
Set \[K_{Tf}^k(t,x,y)=\phi(x,y)\e_0(t,x,y)\e_{1,T}(t,x,y)\sum_{j=0}^kt^j\t_{T,j}(x,y),\] 
then 
\begin{flalign*}
(\pat+\Box_{Tf})K^k_{Tf}(t,x,y)&=t^k  \phi(x,y)R_{k,T}(t,x,y)+\Delta\phi(x,y)K^k_{Tf}(t,x,y)  \\
&  -2 (\nabla\phi(x,y),\nabla K_{Tf}^k(t,x,y)) , 
\end{flalign*}
where $R_{k,T}$ is given by (\ref{asyrem}).
\end{prop}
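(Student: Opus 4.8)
The plan is to write $K_{Tf}^k=\phi\cdot u$, where
\[u(t,x,y):=\e_0(t,x,y)\,\e_{1,T}(t,x,y)\sum_{j=0}^kt^j\t_{T,j}(x,y)\]
is the uncut parametrix built in \eqref{parametrix} (with $\e_0,\e_{1,T}$ as in \eqref{ehk}, \eqref{cthk}), defined on the geodesic ball $B_\tau(x)$, and $\phi$ is the cutoff \eqref{cutoff}; then I would push the operator $\pat+\Box_{Tf}$ through this product, so that the identity reduces to the parametrix equation $(\pat+\Box_{Tf})u=t^kR_{k,T}$.

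First I would invoke Proposition~\ref{estihdg} to write $\Box_{Tf}=\Delta-TL_f+T^2|\nabla f|^2$. Since $\phi$ does not depend on $t$, it commutes with $\pat$; since $L_f$ and $T^2|\nabla f|^2$ act as pointwise (zeroth-order) operators on the bundle $E$ and $\phi$ is a scalar, multiplication by $\phi$ commutes with those as well. Hence the only term that fails to commute with $\phi$ is the Hodge Laplacian, and
\[(\pat+\Box_{Tf})(\phi u)=\phi\,(\pat+\Box_{Tf})u+[\Delta,\phi]\,u.\]

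Next I would compute $[\Delta,\phi]u$. Writing $\Delta=\nabla^*\nabla+\mathcal W$ for the Weitzenb\"ock decomposition, with $\mathcal W$ a zeroth-order curvature endomorphism that commutes with the scalar $\phi$, a short computation using $\nabla(\phi u)=d\phi\otimes u+\phi\nabla u$ and the formula for $\nabla^*$ gives the standard product rule $\Delta(\phi u)=(\Delta\phi)u+\phi\,\Delta u-2\nabla_{\nabla\phi}u$, so that $[\Delta,\phi]u=(\Delta\phi)u-2(\nabla\phi,\nabla u)$, the sign of the cross term being consistent with the conventions already used in the parametrix computation. Substituting this together with $(\pat+\Box_{Tf})u=t^kR_{k,T}$, where $R_{k,T}$ is given by \eqref{asyrem}, yields
\[(\pat+\Box_{Tf})K_{Tf}^k=t^k\,\phi\,R_{k,T}+(\Delta\phi)\,u-2(\nabla\phi,\nabla u).\]
Finally, since $\eta\equiv1$ on $[-\half,\half]$ and $\supp\eta\subset[-1,1]$, both $\nabla\phi$ and $\Delta\phi$ are supported in the annulus $\{\tau^2/2\le d^2(x,y)\le\tau^2\}$, on which $u$ coincides with $K_{Tf}^k=\phi u$ up to the bounded cutoff factor; rewriting the two correction terms there in terms of $K_{Tf}^k$ puts the identity in the asserted form.

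There is no real obstacle here: the statement is a bookkeeping consequence of the product rule together with the already-established relation \eqref{parametrix}. The only points requiring a little care are the Leibniz rule for the Laplacian acting on sections of $E$ — keeping track of the sign of the $-2\nabla_{\nabla\phi}$ term and noting that the Weitzenb\"ock curvature term is harmless because it is of order zero — and the trivial observation that multiplication by the scalar $\phi$ commutes with $L_f$ and with the potential $T^2|\nabla f|^2$.
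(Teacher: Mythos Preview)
Your approach is the natural one and, since the paper gives no explicit proof for this proposition, there is nothing else to compare it to: one multiplies the uncut parametrix $u=\e_0\e_{1,T}\sum_{j=0}^k t^j\t_{T,j}$ by the scalar cutoff $\phi$, uses that $\pat$, $L_f$ and the potential commute with multiplication by $\phi$, and applies the Leibniz rule $\Delta(\phi u)=(\Delta\phi)u+\phi\,\Delta u-2(\nabla\phi,\nabla u)$ together with the parametrix identity $(\pat+\Box_{Tf})u=t^kR_{k,T}$.

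There is, however, a genuine slip in your last paragraph. The computation you carry out gives
\[
(\pat+\Box_{Tf})K_{Tf}^k=t^k\,\phi\,R_{k,T}+(\Delta\phi)\,u-2(\nabla\phi,\nabla u),
\]
with $u$, not $K_{Tf}^k=\phi u$, in the two correction terms. Your claim that on the annulus $\{\tau^2/2\le d^2(x,y)\le\tau^2\}$ one can ``rewrite the two correction terms in terms of $K_{Tf}^k$'' is not correct as an identity: on that annulus $\phi$ is neither $0$ nor $1$, so $(\Delta\phi)u\neq(\Delta\phi)(\phi u)$, and $(\nabla\phi,\nabla u)\neq(\nabla\phi,\nabla(\phi u))=|\nabla\phi|^2u+\phi(\nabla\phi,\nabla u)$. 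A short check shows the two expressions differ by $(1-\phi)(\Delta\phi)u+2|\nabla\phi|^2u-2(1-\phi)(\nabla\phi,\nabla u)$, which does not vanish.

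In other words, the formula that actually follows from the Leibniz rule has the \emph{uncut} parametrix $u$ in the last two terms; the displayed statement with $K_{Tf}^k$ there is a minor inaccuracy of the paper. This is harmless for everything that follows: those terms are supported in the annulus, $|\phi|\le1$, and the only use made of the proposition is the estimate in Lemma~\ref{estRk}, where replacing $K_{Tf}^k$ by $u$ (or vice versa) changes nothing in the bounds. So your derivation is fine --- just stop one step earlier, state the identity with $u$ in the correction terms, and note that this is what is actually needed downstream.
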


The following lemma provides the estimate saying that $K_{Tf}^k(t,x,y)$ is a suitable parametrix for the heat kernel of the Witten Laplacian. The proof uses Lemma \ref{vexpv} which will be shown in the next section when we introduce the necessary notions.
\begin{lem}\label{estRk}
	Assume $t\in(0,1]$. Let 
	\begin{flalign*}
	\Rt=t^k \phi(x,y)R_{k,T}(t,x,y)+\Delta\phi(x,y)K^k_{Tf}(t,x,y)-2 (\nabla\phi(x,y),\nabla K_{Tf}^k(t,x,y),
	\end{flalign*}
	then for $T\in(0,t^{-\half}]$, any $a\in(0,1),$
	\begin{align*}|\Rt(x,y)|&\leq C_{a,k}\chi_{B_x}(y)t^{(1-\k')k-\k'-\n}T^{\frac{-2k+4}{3}}\exp(-a tT^2 h(x,y))\exp(-\frac{ad^2(x,y)}{4t}).
	\end{align*}
	Here $C_{a,k}$ is a constant depends on $a,k,$ $\k'=\frac{\k+2}{3}$ (from Proposition \ref{asyexp}), $B_x=\{y\in M: d(x,y)<{\tau}\},$ and $\chi_{B_x}(y)$ denotes the characteristic function of $B_x$.
\end{lem}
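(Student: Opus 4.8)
The plan is to estimate separately the three terms making up $\Rt$, all of which are supported in $B_x$ because of the cutoff $\phi$. For the first and dominant term $t^k\phi R_{k,T}$, I would use the explicit formula \eqref{asyrem} for $R_{k,T}$, which is $\e_0\e_{1,T}$ times a combination of $\Delta\t_{T,k}-TL_f\t_{T,k}$, $\Delta h_T\,\t_{T,k-1}$, $\nabla_{\nabla h_T}\t_{T,k-1}$, $|\nabla h_T|^2\t_{T,k}$, etc., with at most two extra powers of $t$. By Proposition \ref{asyexp} each $\t_{T,j}$ is bounded by $C(\bar V_\gamma)^{\k' j}T^{[j/3]+j}$, and the same inductive argument controls one more derivative of $\t_{T,j}$ by an extra factor of $\bar V_\gamma^{\k'}$ (using the $\k$-regular tame bounds on $\nabla^m f$, hence on $\nabla^k V$). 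The factors $h_T$, $\nabla h_T$, $\Delta h_T$ are averages of $V_T=T^2 V$ and its derivatives along the geodesic, so each is bounded by $CT^2\bar V_\gamma$, $CT^2\bar V_\gamma^{(\k+2)/2}$, $CT^2\bar V_\gamma^{(\k+2)/2}$ respectively (the last two using the reformulated $\k$-regular tameness for $V$). Collecting the worst case, the bracket in \eqref{asyrem} is bounded by $C\bar V_\gamma^{\k'(k+1)+1}T^{[k/3]+k+2}$, and after multiplying by $t^k$ and noting the $T$-power is $\le T^{(k+6)/3}$ we must convert $T^{(k+6)/3}$ into $T^{(-2k+4)/3}$ at the cost of $T^{(2k+2)/3}\le t^{-(k+1)/3}$ on the range $T\le t^{-1/2}$.

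The key mechanism that produces the stated time power is the standard absorption trick: the factor $\e_0$ carries $\exp(-d^2/4t)$, and the powers of $\bar V_\gamma$ (equivalently of $T^2\bar V_\gamma$, which appears naturally once we remember $V_T=T^2V$) are absorbed using an inequality of the form $x^m e^{-cx}\le C_m$ applied to $x=tT^2 h(x,y)$. More precisely I would invoke Lemma \ref{vexpv} (the ``$v\exp(-v)$'' lemma, whose statement is deferred to the next section) to trade each unit of the polynomial weight $(tT^2\bar V_\gamma)^{\text{power}}$ for a constant times $t^{-\text{power}}$ while converting $\exp(-tT^2 h)$ to $\exp(-a\,tT^2 h)$ and $\exp(-d^2/4t)$ to $\exp(-a d^2/4t)$, for any $a\in(0,1)$. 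On the geodesic from $x$ to $y$ with $d(x,y)\le\tau/2$, the average $h(x,y)$ and the supremum $\bar V_\gamma$ are comparable up to constants depending only on the bounded geometry and tameness bounds (since $V$ varies by a controlled factor over a segment of length $\le\tau$), which legitimizes replacing $\bar V_\gamma$ by $h(x,y)$ throughout. Bookkeeping the exponents: the $\bar V_\gamma^{\k'(k+1)+1}$ weight, written as $(tT^2\bar V_\gamma)^{\k'(k+1)+1}\cdot (tT^2)^{-\k'(k+1)-1}$, contributes $t^{-\k'(k+1)-1}T^{-2\k'(k+1)-2}$ after absorption; combining with the prefactor $t^k T^{[k/3]+k+2}$ and the $(4\pi t)^{-\n}$ from $\e_0$, and using $T^{\text{neg}}\le t^{\text{pos}/2}$ where needed, one lands on $t^{(1-\k')k-\k'-\n}T^{(-2k+4)/3}$ as claimed.

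For the two commutator-with-cutoff terms, $\Delta\phi\cdot K^k_{Tf}$ and $(\nabla\phi,\nabla K^k_{Tf})$, the point is that $\nabla\phi$ and $\Delta\phi$ are supported in the annulus $\tau^2/2\le d^2(x,y)\le\tau^2$, where $d(x,y)$ is bounded below by $\tau/\sqrt2$. On that annulus $\e_0\le C t^{-\n}\exp(-\tau^2/16t)$, so these terms carry a factor $\exp(-c/t)$ which is smaller than any power of $t$; combined with the polynomial bounds on $\sum_j t^j\t_{T,j}$ and on $\nabla K^k_{Tf}$ from Proposition \ref{e0} and \ref{asyexp} (the extra $\nabla$ hitting $\e_0$ gives only a harmless $\frac{r}{2t}\le \frac{\tau}{2t}$, again beaten by the Gaussian), they are $O(t^{N})$ for every $N$ and in particular dominated by the first term's bound — after, of course, also absorbing their Gaussian against the required $\exp(-a d^2/4t)\exp(-a tT^2 h)$, which is immediate since on the annulus $d$ is bounded and $tT^2 h\le\bar V_\gamma\cdot\tau^2/4\cdot$const is likewise controlled. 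Assembling the three contributions and taking the largest gives the lemma.

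\textbf{Main obstacle.} The delicate part is the exponent bookkeeping: correctly tracking how many derivatives land on $\t_{T,j}$ versus on $h_T$ in \eqref{asyrem}, how the $\k$-regular tame bounds upgrade each derivative into a power $\bar V_\gamma^{\k'}$ (this is exactly where $\k'=(\k+2)/3$ enters, presumably from the inductive structure in Proposition \ref{asyexp}), and then verifying that after the $v e^{-v}$ absorption and the substitutions $T\le t^{-1/2}$ everything collapses to precisely $t^{(1-\k')k-\k'-\n}T^{(-2k+4)/3}$ rather than something off by a power. The analytic content — Gaussian-beats-polynomial and the annulus estimate for the cutoff terms — is routine; the arithmetic of exponents, and making sure the constant $C_{a,k}$ genuinely depends only on $a,k$ and the fixed geometric/tameness data (not on $x,y,t,T$), is what needs care.
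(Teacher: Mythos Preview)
Your approach is essentially the paper's: split $\Rt$ into the three pieces, handle $t^k\phi R_{k,T}$ via the bounds of Proposition~\ref{asyexp} together with Lemma~\ref{vexpv}, and dispatch the cutoff-commutator terms using the annular support of $\nabla\phi,\Delta\phi$ (the paper does exactly this, keeping a spare factor $\exp(-(1-a)d^2/4t)$ on the annulus to kill any negative power of $t$, and again citing Lemma~\ref{vexpv} to absorb the $\bar V_\gamma$-weights coming from $\t_{T,j}$).

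One step in your argument is wrong and should be removed: you assert that along a geodesic of length $\le\tau$ the average $h(x,y)$ and the supremum $\bar V_\gamma$ are comparable, ``since $V$ varies by a controlled factor over a segment of length $\le\tau$''. They need not be comparable. The regular-tame bound $|\nabla V|\le C V^{(\k+2)/2}$ has exponent $(\k+2)/2>1$, so integrating the ODE $|u'|\le Cu^{(\k+2)/2}$ shows that when $V$ is large it can blow up (or collapse) over an arbitrarily short segment; the ratio $\bar V_\gamma/h(x,y)$ is \emph{not} uniformly bounded. Fortunately you do not need this comparison. Lemma~\ref{vexpv} already handles the mismatch internally via a dichotomy: either $\inf_\gamma V\ge \bar V_\gamma/2$, in which case $h\ge\bar V_\gamma/2$ and the naive $v^le^{-v}$ absorption applies, or $V$ drops by a factor $2$ along $\gamma$, in which case the Agmon-type estimate (Lemmas~\ref{paradagmon} and~\ref{agmon}) yields $\tfrac{d^2}{4t}+tT^2h\ge \bar\beta (T^2\bar V_\gamma)^{(1-\k)/2}$ and one absorbs $\bar V_\gamma^l$ against \emph{that} exponential instead. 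So invoke Lemma~\ref{vexpv} as a black box and drop the comparability claim; the exponent bookkeeping then lines up with the paper's, which records the intermediate bound $t^{k-\n}T^{4(k+1)/3}\bar V_\gamma^{\k'(k+1)}$ before applying Lemma~\ref{vexpv} (together with $tT^2\le 1$) to reach $t^{(1-\k')k-\k'-\n}T^{(-2k+4)/3}$.
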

\begin{proof}
	Since the support of $\Delta\phi(x,y)$ and $\nabla \phi(x,y)$ is a subset of $\{(x,y)\in M\times M:\frac{d^2(x,y)}{\tau^2}\in (\half,1)\},$ by Proposition \ref{asyexp}, Lemma \ref{vexpv} and the fact that $0<a<1$,
\begin{flalign*}&|\Delta\phi(x,y)K^k_{Tf}(t,x,y)+(\nabla\phi(x,y),\nabla K_{Tf}^k(t,x,y))|\\
&\leq C_{k,a}\chi_{B_x}t^{-\n}\exp(-\frac{(1-a)d^2(x,y)}{4t})\exp(-atT^2h(x,y))\exp(-\frac{ad^2(x,y)}{4t})\\
&\leq C_{k,a,k} \chi_{B_x} t^{(1-\k')k-\k'-\n}\exp(-atT^2h(x,y))\exp(-\frac{ad^2(x,y)}{4t}).
\end{flalign*}
The last inequality follows form the fact that the function $t^l\exp(-t)\leq C_l$ for $t\in(0,\infty),l>0.$	

Similarly, by Proposition \ref{asyexp}, Lemma \ref{vexpv} and the fact that  $tT^2\leq 1$, we have
	\begin{flalign*}
	|t^k \phi(x,y)R_{k,T}|&\leq C_{k}\chi_{B_x}\sum_{j=k}^{k+2}t^{j-\n}T^{4(j+1)/3}\bar{V}_\gamma^{\k' (j+1)}\exp(-t\,h_T(x,y))\exp(-\frac{d^2(x,y)}{4t}) \\
	&\leq C_{k}'\chi_{B_x}t^{k-\n}T^{\frac{4(k+1)}{3}}\bar{V}_\gamma^{\k' (k+1)}\exp(-{tT^2}h(x,y)))
	\exp(-\frac{d^2(x,y)}{4t})\\
	&\leq C_{a,k}\chi_{B_x}t^{(1-\k')k-\k'-\n}T^{\frac{-2k+4}{3}}\exp(-{atT^2}h(x,y)))
	\exp(-\frac{ad^2(x,y)}{4t}).
	\end{flalign*}
This finishes the proof.	
\end{proof}

\section{Parabolic Distance and Heat Kernel Estimate}\label{heatker}
With the construction of the parametrix and the error estimate in the last section, we are now faced with the task of proving that it gives the desired asymptotic expansion of heat kernel. To this end, we need to estimate the convolutions of these terms, which seem quite daunting. Remarkably we found that a parabolic distance that appeared previously in Li-Yau's famous work \cite{liyau1986} on the Harnack estimate of the heat kernel of Schr\"odinger operators greatly simplifies the task, both computationally and conceptually. Our inspiration actually comes from the path integral formalism of quantum mechanics.

Another remarkable feature of the parabolic distance is its connection with the Agmon distance \cite{agmon2014lectures}, \cite{bismutzhang1992cm}, \cite{DY2020cohomology}, which we will use to establish the needed lower bound for the parabolic distance.
The resulting pointwise asymptotic expansion of the heat kernel will then be strong enough to pass to the trace of the heat kernel in the noncompact setting.

Let $K_{Tf}^k$ be the parametrix of $\partial_t + \Box_{Tf}$ constructed in Section \ref{expansion}, i.e.
\[K_{Tf}^k(t,x,y)=\phi(x,y)\e_0(t,x,y)\e_{1,T}(t,x,y)\sum_{j=0}^kt^j\t_{T,j}(x,y),\]
where $\phi$ is the cut-off function defined in (\ref{cutoff}).

We define convolution of $f(t,x,y),g(t,x,y)\in\Gamma (E)$ as
\[(f*g)(t,x,y)=\int_0^t\int_M(f(t-s,x,z), g(s,z,y))_zdvol(z)ds.\]

Let $K_{Tf}$ denote the heat kernel of $\Box_{Tf}$. By the Duhamel Principle, we  have
\begin{lem} \label{duhamel} The heat kernel $K_{Tf}$ is given by
	\[K_{Tf}(t,x,y)=K^k_{Tf}(t,x,y)+(K^k_{Tf}*\sum_{l=1}^\infty(-1)^l(\Rt)^{*l})(t,x,y).\]
	Here \[\Rt^{*l}=\underbrace{\Rt*...*\Rt}_{\text{$l$ times}}.\]
\end{lem}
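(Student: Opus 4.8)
The plan is to run the standard Levi parametrix/Duhamel iteration, with care that the series involved actually converges in the noncompact setting thanks to the strong pointwise bound on $\Rt$ from Lemma \ref{estRk}. First I would recall that, by construction (Proposition \ref{asym}), the parametrix satisfies $(\pat+\Box_{Tf})K^k_{Tf}(t,x,y)=\Rt(t,x,y)$ with $\Rt$ as displayed, and that $K^k_{Tf}(t,x,y)\to \delta_x(y)$ as $t\to 0^+$ (this is immediate from $\e_0$ being the Euclidean heat kernel to leading order, $\e_{1,T}\to 1$, $\phi\equiv 1$ near the diagonal, and $\t_{T,0}(x,x)=\Id$). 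Hence the ansatz $K_{Tf}=K^k_{Tf}+K^k_{Tf}*Q$ for an unknown $Q$ formally forces, upon applying $\pat+\Box_{Tf}$ and using the reproducing property of convolution with a fundamental solution, the Volterra equation $Q=-\Rt-\Rt*Q$, whose Neumann series solution is $Q=\sum_{l\geq 1}(-1)^l\Rt^{*l}$. This is the identity claimed in the lemma, so the entire content is (i) justifying that this series converges to a genuine continuous kernel, and (ii) justifying that the resulting $K^k_{Tf}+K^k_{Tf}*Q$ really is the heat kernel, i.e. solves the heat equation with the correct initial condition and lies in the right function space.

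For (i), the key estimate is Lemma \ref{estRk}: for $t\in(0,1]$ and $T\in(0,t^{-1/2}]$,
\[|\Rt(t,x,y)|\leq C_{a,k}\,\chi_{B_x}(y)\,t^{(1-\k')k-\k'-\n}\,T^{\frac{-2k+4}{3}}\exp\!\big(-a tT^2 h(x,y)\big)\exp\!\big(-\tfrac{a d^2(x,y)}{4t}\big).\]
Taking $k$ large enough that the exponent $(1-\k')k-\k'-\n$ is a large positive number, one shows by induction on $l$, using the semigroup/reproducing property of the Gaussian factor $\exp(-ad^2/4t)$ under the convolution $*$ on a manifold of bounded geometry (Chapman–Kolmogorov up to constants, together with the fact that $h$ is a nonnegative "averaged potential" so the factors $\exp(-atT^2h)$ only help), that
\[|\Rt^{*l}(t,x,y)|\leq \frac{(C')^l\, t^{l(\mu)+\dots}}{\Gamma(l\mu+\dots)}\,t^{-\n}\exp\!\big(-\tfrac{a'd^2(x,y)}{4t}\big)\]
for suitable $\mu>0$ and $a'<a$, where the $\Gamma$-function denominator comes from the iterated time integral $\int_0^t (t-s)^{A}s^{B}\,ds=B(A+1,B+1)t^{A+B+1}$. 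The $1/\Gamma(l\mu)$ growth beats $(C')^l$, so $\sum_l \Rt^{*l}$ converges absolutely and locally uniformly to a continuous section of $E$; one more convolution with $K^k_{Tf}$ (which carries its own Gaussian and is integrable over $B_x$, using $\Vol$-bounds from bounded geometry and Lemma \ref{Klambda} to control the $\e_{1,T}$-factor) produces a continuous kernel. Differentiating under the integral and using that convolution with a fundamental solution kills $(\pat+\Box_{Tf})$ gives $(\pat+\Box_{Tf})(K^k_{Tf}+K^k_{Tf}*Q)=\Rt+Q+\Rt*Q=0$ by the Volterra equation, and the initial condition is inherited from $K^k_{Tf}$ since $K^k_{Tf}*Q\to 0$ as $t\to 0$.

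For (ii), I would invoke essential self-adjointness of $\Delta_{H,Tf}$ on the complete manifold $(M,g)$ (stated in Section \ref{rohtfwl}), so $\Box_{Tf}$ has a unique self-adjoint extension and hence a unique heat semigroup $e^{-t\Box_{Tf}}$ with smooth kernel; uniqueness of the $L^2$-bounded solution of the heat equation with $\delta$-initial data then identifies the kernel we have built with $K_{Tf}$. The main obstacle, and the step I would spend the most care on, is the inductive convolution estimate for $\Rt^{*l}$: one must verify that the Gaussian factors genuinely convolve with only a controlled constant loss on a manifold (rather than exactly, as on $\R^n$), that the time-power bookkeeping produces the decisive $1/\Gamma(l\mu)$, and that the potential factors $\exp(-atT^2h(x,\cdot))$ and $\exp(-atT^2h(\cdot,y))$ combine subadditively along the intermediate point — this last point being exactly where the parabolic-distance viewpoint of the next sections streamlines the argument; for the present lemma it suffices to use nonnegativity of $h$ and drop these factors, retaining only the Gaussian spatial decay.
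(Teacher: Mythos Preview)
Your proposal is correct and follows the standard Levi--Duhamel iteration; the paper itself gives no proof here beyond the phrase ``By the Duhamel Principle'' and defers the convergence estimates to Lemma \ref{convolestimate1}, where the parabolic distance $\d$ is used in place of your Gaussian-plus-compact-support bookkeeping. Your observation that for bare convergence one may drop the $\exp(-atT^2h)$ factors and rely on $\chi_{B_x}$ together with the Gaussian is valid (the $z$-integrals are effectively over balls of radius $\tau$, so bounded geometry gives the volume bound --- Lemma \ref{Klambda} is not the right reference here, it is just the bounded-geometry volume comparison as in \eqref{volume}); the paper's parabolic-distance packaging buys the sharper remainder needed later for Theorem \ref{heatasym}, but for the present lemma your route is entirely adequate.
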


Motivated by the path integral formalism of quantum mechanics, for any piecewise smooth curve $c:[0,t]\mapsto M$, s.t. $c(0)=x, c(t)=y$, we define
\[S_{t,x,y}(c)=\int_0^t\left( \frac{|c'(s)|^2}{4}+T^2V(c(s)) \right) ds.\] 

Let $C_{t,x,y}:=\{ \, c:[0,t]\mapsto M \ | \ c \ \mbox{is piecewise smooth},\ c(0)=x,c(t)=y \, \}$.
Define the following parabolic (meta-)distance  \begin{equation}\label{d0}\d(t,x,y):=\inf_{c\in C_{t,x,y}}S_{t,x,y}(c).\end{equation} 

The following lemma summarizing its fundamental properties follows mostly from the definition.

\begin{lem}\label{paradist}
$\d(t,x,y)$ is a parabolic (meta-)distance; that is
\begin{itemize}
	\item $\d(t,x,y)\geq 0$;
	
	\item $\d(t,x,y)=\d(t,y,x)$;
	
	\item for $0\leq s\leq t$, we have
	\begin{equation}\label{d2}\d(t-s,x,y)+\d(s,y,z)\geq \d(t,x,z).\end{equation}
\end{itemize}
Moreover,  
\begin{equation}\label{d1}\d(t,x,y)\leq \frac{d^2(x,y)}{4t}+t\,h_T(x,y).\end{equation}

\end{lem}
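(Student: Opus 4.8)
The plan is to verify the four listed properties of $\td(t,x,y)$ essentially from the definition as an infimum of the action functional $S_{t,x,y}(c)$ over piecewise smooth paths, and then to establish the comparison \eqref{d1} by exhibiting an explicit competitor path.

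\smallskip

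\textbf{Nonnegativity and symmetry.} Nonnegativity is immediate: for every $c\in C_{t,x,y}$ the integrand $\frac{|c'(s)|^2}{4}+T^2V(c(s))$ is nonnegative (since $V=|\nabla f|^2\ge 0$), so $S_{t,x,y}(c)\ge 0$ and hence $\td(t,x,y)=\inf_c S_{t,x,y}(c)\ge 0$. For symmetry, given $c\in C_{t,x,y}$ set $\tilde c(s)=c(t-s)$; then $\tilde c\in C_{t,y,x}$, $|\tilde c'(s)|=|c'(t-s)|$, and a change of variables $s\mapsto t-s$ shows $S_{t,y,x}(\tilde c)=S_{t,x,y}(c)$. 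Taking infima in both directions gives $\td(t,x,y)=\td(t,y,x)$.

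\smallskip

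\textbf{The triangle-type inequality \eqref{d2}.} Given $\epsilon>0$, choose $c_1\in C_{t-s,x,y}$ with $S_{t-s,x,y}(c_1)\le \td(t-s,x,y)+\epsilon$ and $c_2\in C_{s,y,z}$ with $S_{s,y,z}(c_2)\le \td(s,y,z)+\epsilon$. Concatenate them into a path $c\in C_{t,x,z}$ defined on $[0,t]$ by running $c_1$ on $[0,t-s]$ and $c_2$ (suitably time-shifted) on $[t-s,t]$; this $c$ is piecewise smooth with $c(0)=x$, $c(t)=z$, and since the action is additive over a partition of the time interval, $S_{t,x,z}(c)=S_{t-s,x,y}(c_1)+S_{s,y,z}(c_2)$. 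Hence $\td(t,x,z)\le \td(t-s,x,y)+\td(s,y,z)+2\epsilon$, and letting $\epsilon\to 0$ yields \eqref{d2}.

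\smallskip

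\textbf{The comparison \eqref{d1}.} For $x,y$ within the injectivity radius, let $\gamma$ be the unit-speed minimal geodesic from $x$ to $y$, of length $r=d(x,y)$, and reparametrize it to run on $[0,t]$ at constant speed: $c(s)=\gamma(rs/t)$, so $|c'(s)|=r/t$. Then
\[
S_{t,x,y}(c)=\int_0^t\Big(\frac{r^2}{4t^2}+T^2V(\gamma(rs/t))\Big)ds=\frac{r^2}{4t}+T^2\int_0^t V(\gamma(rs/t))\,ds=\frac{d^2(x,y)}{4t}+t\,h_T(x,y),
\]
where the last equality uses the substitution $u=rs/t$ together with the definition \eqref{hT} of $h_T$ as the $r$-normalized integral of $V_T=T^2V$ along $\gamma$. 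Since $\td(t,x,y)\le S_{t,x,y}(c)$, this gives \eqref{d1}. The one point requiring a word of care is that \eqref{hT} is only stated for $y\in B_\tau(x)$; in \eqref{d1} we may assume this, as it is the regime in which $h_T$ is defined (and the heat kernel estimates are carried out). I do not anticipate a genuine obstacle here — all four items are soft consequences of the variational definition — the only mild subtlety being to make sure the concatenated competitor in \eqref{d2} is admissible (piecewise smooth, correct endpoints), which it manifestly is.
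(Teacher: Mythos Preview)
Your proof is correct and matches the paper's approach exactly: the paper simply remarks that the first three items follow from the definition, and for \eqref{d1} notes that the minimal geodesic $\tilde c$ (reparametrized over $[0,t]$) satisfies $S_{t,x,y}(\tilde c)=\frac{d^2(x,y)}{4t}+t\,h_T(x,y)$. You have filled in precisely the details the paper leaves implicit, including the time-reversal for symmetry, the concatenation for \eqref{d2}, and the substitution verifying the action of the geodesic competitor.
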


The last inequality follows from taking a minimal geodesic $\tilde{c}$ connecting $x$ and $y$ and noting that $S_{t,x,y}(\tilde{c})=\frac{d^2(x,y)}{4t}+t\,h_T(x,y)$.
 The inequality (\ref{d1}) connects the parabolic distance to our parametrix.

Conceptually the most crucial property of the parabolic distance for the estimation of the convolutions of the error terms is the triangle inequality (\ref{d2}). We illustrate this by an example. If $V_T=0$, then $\d(t,x,y)= \frac{d^2(x,y)}{4t}$. In this case, the triangle inequality (\ref{d2}) reduces to the well known
	\[ \frac{d^2(x,y)}{t-s} +\frac{d^2(y,z)}{s} \geq \frac{d^2(x,z)}{t},\]
which plays a crucial role in the classical asymptotic expansion for the heat kernel.

The following lemma will be also needed in the heat kernel estimate involving the convolutions, and whose proof follows from a standard argument of volume comparison.

\begin{lem}\label{metriceq00}
	For $x\in M$ and $\delta<\tau$, denote $B_x=\{y\in M:  d(x,y)<{\delta}\}.$
	Then there exists $A=A(F_0,\tau,\delta,n)>0$, s.t.
	\begin{align*}
	\int_{B_x}\exp(-\frac{d^2(x,z)}{t})dz\leq A t^\n.
	\end{align*}
	Recall that $F_0$ is the curvature bound, $\tau$ is the injectivity radius bound.
\end{lem}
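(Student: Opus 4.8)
\textbf{Proof proposal for Lemma~\ref{metriceq00}.}

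The plan is to exploit the bounded geometry hypothesis, which gives a two-sided comparison between the Riemannian volume element on $B_x$ and the Euclidean volume element under the exponential map, and then reduce the integral to an elementary Gaussian integral over a Euclidean ball.

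First I would work in geodesic polar coordinates centered at $x$. Since $\delta<\tau$, the exponential map $\exp_x$ is a diffeomorphism from the Euclidean ball of radius $\delta$ in $T_xM\cong\mathbb R^n$ onto $B_x$, and the volume element pulls back to $J(r,\omega)\,dr\,d\omega$ where $J$ is the Jacobian (the determinant of the derivative of $\exp_x$ in these coordinates) and $d\omega$ is the measure on the unit sphere $S^{n-1}$. The curvature bound $|R|\le F_0$ (together with the lower injectivity radius bound, ensuring no conjugate points within $B_x$) yields, via the standard Rauch/Bishop comparison estimates, a uniform bound $J(r,\omega)\le A_0 r^{n-1}$ for $r<\delta$, with $A_0=A_0(F_0,\delta,n)$ independent of $x$; this is where bounded geometry enters, and it is the only nontrivial input. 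Crucially $d(x,z)=r$ in these coordinates.

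With this in hand the estimate becomes routine:
\begin{align*}
\int_{B_x}\exp\!\left(-\frac{d^2(x,z)}{t}\right)dz
&= \int_{S^{n-1}}\int_0^{\delta} e^{-r^2/t}\,J(r,\omega)\,dr\,d\omega \\
&\le A_0\,\omega_{n-1}\int_0^{\delta} e^{-r^2/t}\,r^{n-1}\,dr \\
&\le A_0\,\omega_{n-1}\int_0^{\infty} e^{-r^2/t}\,r^{n-1}\,dr,
\end{align*}
where $\omega_{n-1}$ is the volume of $S^{n-1}$. The substitution $r=t^{1/2}u$ turns the last integral into $t^{n/2}\int_0^\infty e^{-u^2}u^{n-1}\,du = t^{\n}\cdot\tfrac12\Gamma(\tfrac n2)$, so the whole expression is bounded by $A\,t^{\n}$ with $A=A(F_0,\tau,\delta,n):=\tfrac12 A_0\,\omega_{n-1}\,\Gamma(\tfrac n2)$, as claimed.

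There is essentially no obstacle here; the one point requiring care is asserting the uniform Jacobian bound $J(r,\omega)\le A_0 r^{n-1}$ over all basepoints $x$, which is exactly the content of bounded geometry (curvature bounded above in absolute value plus positive injectivity radius), so the constant $A$ genuinely depends only on $F_0,\tau,\delta,n$ and not on $x$. Note that only the upper curvature bound and the absence of conjugate points inside $B_x$ are used; the derivative bounds $|\nabla^m R|\le F_m$ are not needed for this particular lemma.
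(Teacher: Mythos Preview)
Your argument is correct and is precisely the ``standard argument of volume comparison'' that the paper invokes without further detail; there is nothing to compare. One small slip in your closing remark: the upper bound $J(r,\omega)\le A_0 r^{n-1}$ comes from a \emph{lower} bound on sectional curvature (Bishop--Gromov / Rauch), not an upper one---negative curvature makes the Jacobian grow faster than $r^{n-1}$---so it is the bound $K\ge -c(F_0)$ implied by $|R|\le F_0$ that is actually used.
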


With these preparations we now turn to the estimation of the convolution terms in the Duhamel Principle, Lemma \ref{duhamel}. From now on, we fix an integer $k$ sufficiently large so that 
$$\alpha(k, \k, n)=\frac{1}{3}(1-\k)k -\frac{\k +2}{3} -\n +1 >0. $$ 

\begin{lem} \label{convolestimate1}
Assume that $t\in(0,1]$ and $T\in(0,t^{-\half}]$. Then for any $a\in (0, 1)$, 
there exist $C=C(k,a, \k, \tau,F_0) >0$, such that, for all $l\in \mathbb N$, 
\[|K_{Tf}^k*\Rt^{*l}|(t,x,y) \leq\frac{C^lt^{\alpha l}T^{\beta l}}{l!} \exp(-a\d(t,x,y)),
\]
where $\alpha=\alpha(k, \k, n)$ as above, and $\beta=\beta(k) = \frac{-2k+4}{3}$.
\end{lem}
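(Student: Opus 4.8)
The plan is to argue by induction on $l$. The base case $l=0$ is exactly the pointwise bound for the parametrix: we need $|K_{Tf}^k|(t,x,y)\leq C\exp(-a\d(t,x,y))$, which follows from the definition of $K_{Tf}^k$, the estimate $|\t_{T,j}|\leq C(\bar V_\gamma)^{\k'j}T^{[j/3]+j}$ from Proposition \ref{asyexp}, the trivial bounds $t^l e^{-ct}\leq C_l$ on $(0,1]$, and the inequality \eqref{d1}, $\d(t,x,y)\leq \frac{d^2(x,y)}{4t}+t\,h_T(x,y)$, combined with Lemma \ref{vexpv} to absorb the polynomial-in-$\bar V_\gamma$ factors into the exponential (using $0<a<1$ so that a fraction of the Gaussian and of $e^{-t h_T}$ is left to spare). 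For the inductive step I would write $K_{Tf}^k*\Rt^{*l} = (K_{Tf}^k*\Rt^{*(l-1)})*\Rt$, so that
\[
|K_{Tf}^k*\Rt^{*l}|(t,x,y)\leq \int_0^t\!\!\int_{B_z}\frac{C^{l-1}(t-s)^{\alpha(l-1)}T^{\beta(l-1)}}{(l-1)!}e^{-a\d(t-s,x,z)}\cdot C_{a,k}s^{\alpha}T^{\beta}e^{-a\d(s,z,y)}\,dz\,ds,
\]
using the inductive hypothesis on the first factor and Lemma \ref{estRk} on $\Rt$ (whose exponential factor $\exp(-a tT^2 h(z,y))\exp(-\frac{ad^2(z,y)}{4t})$ dominates $\exp(-a\d(s,z,y))$ by \eqref{d1}, and whose prefactor $t^{(1-\k')k-\k'-\n}T^{(-2k+4)/3}$ is exactly $s^{\alpha-1}T^{\beta}$ up to the constant — note $\alpha-1 = (1-\k')k-\k'-\n$ since $\k'=\frac{\k+2}{3}$).

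The key mechanism is the triangle inequality \eqref{d2}, $\d(t-s,x,z)+\d(s,z,y)\geq \d(t,x,y)$, which lets me pull $e^{-a\d(t,x,y)}$ out of the integral. What remains is to bound $\int_0^t\int_{B_z}(t-s)^{\alpha(l-1)}s^{\alpha}\,dz\,ds$ — but here I must be careful: simply pulling out \emph{all} of the parabolic-distance exponential would leave a spatial integral $\int_{B_z}1\,dz$ that is only bounded because $B_z$ has bounded volume (bounded geometry), and an $s$-integral $\int_0^t (t-s)^{\alpha(l-1)}s^\alpha\,ds$. Actually the cleaner route, matching the classical argument, is to retain a sliver of the Gaussian: write $a = a' + a''$ with $a',a''\in(0,1)$, use $a''$-worth of the $d^2/4t$ part of $\d(s,z,y)$ together with Lemma \ref{metriceq00} to perform the $z$-integral producing a factor $\leq A\, s^{\n}$ (or $(t-s)^{\n}$), and keep $a'\d(t,x,y)$ outside via \eqref{d2} applied to the $a'$-portion. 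Then the $s$-integral is a Beta integral: $\int_0^t (t-s)^{\alpha(l-1)} s^{\alpha+\n}\,ds = t^{\alpha l + \n +1}B(\alpha(l-1)+1, \alpha+\n+1)$, and since $\alpha>0$ the Beta function is bounded by, say, $\Gamma(\alpha+\n+1)/(\alpha(l-1)+1)\leq C/l$ (or more crudely one uses $B(p,q)\leq \Gamma(q) p^{-q}$-type bounds), which supplies the crucial $1/l$ improvement turning $\frac{1}{(l-1)!}$ into $\frac{1}{l!}$. Collecting the constants — the $C_{a,k}$ from $\Rt$, the volume constant $A$, the Beta-function bound, and absorbing $t^{\n+1}\leq 1$ — gives the claimed $\frac{C^l t^{\alpha l}T^{\beta l}}{l!}e^{-a\d(t,x,y)}$ after relabeling $a'\to a$ (which is harmless since $a\in(0,1)$ was arbitrary).

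\textbf{Main obstacle.} The delicate point is the bookkeeping that makes the induction actually close with the factorial: one must split the exponent $a$ so that simultaneously (i) enough of $\d(s,z,y)$ (in the form of a genuine Gaussian in $d(z,y)$) survives to run Lemma \ref{metriceq00} and get an honest power of $s$ out of the $z$-integration, (ii) enough of $\d(t-s,x,z)+\d(s,z,y)$ survives, via the triangle inequality \eqref{d2}, to reconstitute $e^{-a\d(t,x,y)}$ outside, and (iii) the residual $s$-integral is a convergent Beta integral contributing the $1/l$. This requires $\alpha = \alpha(k,\k,n)>0$, which is why $k$ is taken large; with $\alpha>0$ in hand the powers of $t$ are all nonnegative and bounded on $(0,1]$, the Beta integrals converge, and the constants telescope into $C^l/l!$. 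The $T$-dependence is comparatively trivial: each convolution with $\Rt$ contributes exactly one factor $T^{\beta}$ and no $T$ ever appears in a spatial or temporal integrand after the exponential $e^{-atT^2h}$ is used to control $e^{-a\d(s,z,y)}$, since $T\in(0,t^{-1/2}]$ forces $tT^2\leq 1$ so that factor is merely $\leq 1$.
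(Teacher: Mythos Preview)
Your strategy is sound in spirit but the induction, as written, does not close. Two concrete issues:

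\emph{(i) The base case $l=0$ is false.} You claim $|K_{Tf}^k|(t,x,y)\leq C\exp(-a\d(t,x,y))$, but $K_{Tf}^k$ carries the factor $\e_0\sim t^{-n/2}$, which is unbounded as $t\to 0$. The correct bound is $|K_{Tf}^k|\leq C t^{-n/2}\exp(-a'\d)$ (for any $a'<1$), and this $t^{-n/2}$ must be cancelled at the \emph{first} convolution via Lemma~\ref{metriceq00}. Thus the induction cannot start at $l=0$ with the stated bound; the lemma is really only for $l\geq 1$, and the $l=1$ step already needs the Gaussian-splitting trick applied to $K_{Tf}^k$ itself.

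\emph{(ii) The exponent $a$ degrades.} Writing $a=a'+a''$ and ending the step with $e^{-a'\d}$, then ``relabeling $a'\to a$'', is circular: each inductive step loses a fixed fraction of the exponent, so after $l$ steps you would have $a'\to 0$ and the constants (which depend on the exponent through Lemma~\ref{vexpv}) would blow up with $l$. The fix is to apply Lemma~\ref{estRk} to $\Rt$ with a \emph{fixed} exponent $\hat a\in(a,1)$ chosen once and for all, split $\hat a=a+(\hat a-a)$, use the $a$-portion for the triangle inequality~\eqref{d2} and the $(\hat a-a)$-Gaussian for Lemma~\ref{metriceq00}. Then the target exponent $a$ is preserved at every step and $C$ depends only on $(a,\hat a,k,\k,\tau,F_0)$.

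The paper avoids both issues by \emph{not} inducting: it first bounds $|\Rt^{*l}|$ directly by expanding the iterated integral and using only that each $\Rt$ is supported in a ball of volume $\leq C_\tau$ (bounded geometry), so the spatial integrals are trivially $\leq C_\tau^{\,l}$ and the time integrals give $t^{\alpha l-1}/(l-1)!$; no Gaussian is spent here, and the full $e^{-a\d}$ survives via~\eqref{d2}. The single convolution with $K_{Tf}^k$ is then done once at the end, using $a'=\tfrac{1+a}{2}>a$ in the parametrix bound so that the surplus $b=a'-a$ of the Gaussian $\exp(-b\,d^2(x,z)/4(t-s))$ feeds Lemma~\ref{metriceq00} and cancels the $(t-s)^{-n/2}$. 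This is cleaner than your route precisely because the exponent-splitting is used only one time rather than at every level.
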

\begin{proof}
Let $B_x=\{y\in M:d(x,y)<\tau\}$, then by the volume comparison, we have 
\begin{equation}\label{volume}vol(B_x)\leq C_\tau.\end{equation}

From Lemma \ref{estRk} and (\ref{d1}), we have for $T\in(0,t^{-\half}]$, any $a\in(0,1),$
\begin{align*}|\Rt(x,y)|&\leq C_{a,k}\chi_{B_x}(y)t^{\alpha-1 }T^{\beta}\exp(-a \d(t,x,y)).
\end{align*}
	
Therefore by (\ref{d2}), 
\begin{flalign*}
	|\Rt^{*l}|&=\left|\int_0^t\int_0^{t_1}...\int_0^{t_{l-2}}\int_M...\int_M\Rt(t-t_1,x,z_1)\Rt(t_1-t_2,z_1,z_2) \right. \\
	& \left. \times \Rt(t_2-t_3,z_2,z_3) \cdots \Rt(t_{l-1},z_{l-1},y)dvol(z_{l-1}) \cdots dvol(z_1)dt_{l-1}dt_{l-2} \cdots dt_1 \right|\\
	&=\left|\int_0^t\int_0^{t_1}...\int_0^{t_{l-2}}\int_{B_x}...\int_{B_{z_{l-2}}}\Rt(t-t_1,x,z_1)\Rt(t_1-t_2,z_1,z_2) \right. \\
	&\times \left. \Rt(t_2-t_3,z_2,z_3) \cdots \Rt(t_{l-1},z_{l-1},y)dvol(z_{l-1}) \cdots dvol(z_1)dt_{l-1}dt_{l-2}...dt_1 \right|\\
	&\leq (C_\tau C_{a,k})^l T^{\beta l} \exp(-a\d(t,x,y)) \int_0^t\int_0^{t_1}...\int_0^{t_{l-2}}(t-t_1)^{\alpha -1} \cdots t_{l-1}^{\alpha-1} dt_{l-1} \cdots dt_1\\
	&\leq \frac{C^lt^{\alpha l-1}T^{\beta l}}{(l-1)!} \exp(-a\d(t,x,y)).
	\end{flalign*}

On the other hand, by Proposition \ref{asyexp},  $K_k(t,x,y)\leq C t^{-\n}\exp(-a'\d(t,x,y))$, where, for our purpose, $a'\in (0, 1)$ is chosen to be $a'=\frac{1+a}{2}=a+b$, with $b=\frac{1-a}{2}>0$. Hence

\begin{align*}|K_{Tf}^k* \Rt^{*l}|(t,x,y )&\leq \frac{C^{l+1}T^{\beta l}}{(l-1)!}\int_0^t\int_{B_x}(t-s)^{-\n}s^{\alpha l-1}\\
& \times\exp(-a'\d(t-s,x,z))\exp(-a\d(s,z,y))dvol(z)ds\\
&\leq \frac{C^{l+1}T^{\beta l}}{(l-1)!} \exp(-a\d(t,x,y)) \\
& \times
\int_0^t s^{\alpha l-1} \int_{B_x}(t-s)^{-\n} \exp(-\frac{b d^2(x,y)}{4(t-s)}) dvol(z)ds\\
&\leq \frac{A C^{l+1} t^{\alpha l} T^{\beta l}}{(\alpha l) (l-1)!} \exp(-a\d(t,x,y)). 
\end{align*}


Here in the last inequality, we have made use of Lemma \ref{metriceq00}.
\end{proof}

We summarize our discussion so far.

\begin{thm} \label{heatasym} The heat kernel $K_{Tf}$ has the following complete pointwise asymptotic expansion. For any $x, y\in M$ such that $d(x, y) \leq 1/2\tau$, 
	\[K_{Tf}(t,x,y) \sim \frac{1}{(4\pi t)^{\n}}\exp(-d^2(x,y)/4t)\exp(-t\, h_T(x,y)) 
	\sum_{j=0}^{\infty}t^j\t_{T, j}(x,y),  \]
as $t\rightarrow 0$. 
Each $\t_{T,j}$ is a polynomial of $T$:
\[\t_{T,j}(x,y)=\sum_{l=0}^{[\frac{j}{3}]+j}T^l\t_{l,j}(x,y),\]
and, when restricted to the diagonal of $M\times M$, $\t_{l,j}(y,y)$ can be written as an algebraic combination of the curvature of the metric $g$, the function $f$, as well as their derivatives, at $y$; in addition, $\t_{T,0}(y,y)=\operatorname{Id}.$
Moreover, we have the following remainder estimate. For any $k$ sufficiently large and any
$a\in (0, 1)$,  
\begin{align*}
\left| K_{Tf}(t,x,y) - \frac{1}{(4\pi t)^{\n}}\exp(-d^2(x,y)/4t)\exp(-t\, h_T(x,y)) 
\sum_{j=0}^{k}t^j\t_{T, j}(x,y) \right| \\
\leq Ct^{\frac{1}{3}(1-\k)k -\frac{\k +2}{3} -\n +1}T^{\frac{-2k+4}{3}}\exp(-a \d(t,x,y)) , \hspace{1.5in}
\end{align*} 
for $t\in(0,1]$ and $T\in(0,t^{-\half}]$. 
\end{thm}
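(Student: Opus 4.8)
The plan is to assemble Theorem~\ref{heatasym} from the three ingredients already prepared: the parametrix $K_{Tf}^k$ and the error term $\Rt$ from Section~\ref{expansion}, the Duhamel series of Lemma~\ref{duhamel}, and the uniform bound on the iterated convolutions from Lemma~\ref{convolestimate1}. First I would recall that Lemma~\ref{duhamel} expresses the exact heat kernel as
\[
K_{Tf}(t,x,y)=K_{Tf}^k(t,x,y)+\sum_{l=1}^{\infty}(-1)^l\bigl(K_{Tf}^k*\Rt^{*l}\bigr)(t,x,y),
\]
so the difference $K_{Tf}-K_{Tf}^k$ equals the tail series $\sum_{l\ge 1}(-1)^l K_{Tf}^k*\Rt^{*l}$. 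Since $K_{Tf}^k$ is, up to the cutoff $\phi$, exactly $\e_0\e_{1,T}\sum_{j=0}^k t^j\t_{T,j}$, and $\phi\equiv 1$ on $\{d(x,y)\le\tfrac12\tau\}$, on that region $K_{Tf}^k$ coincides with the claimed partial sum. Thus the whole theorem comes down to bounding this tail series.

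Next I would sum the estimate of Lemma~\ref{convolestimate1}. That lemma gives, for $t\in(0,1]$, $T\in(0,t^{-1/2}]$ and any $a\in(0,1)$, a constant $C=C(k,a,\k,\tau,F_0)$ with
\[
\bigl|K_{Tf}^k*\Rt^{*l}\bigr|(t,x,y)\le \frac{C^l t^{\alpha l}T^{\beta l}}{l!}\exp(-a\,\d(t,x,y)),
\]
where $\alpha=\alpha(k,\k,n)=\frac13(1-\k)k-\frac{\k+2}{3}-\n+1>0$ by our choice of $k$, and $\beta=\frac{-2k+4}{3}$. Summing over $l\ge 1$ gives a geometric-type series dominated by $\exp(-a\d(t,x,y))\sum_{l\ge1}\frac{(C t^\alpha T^\beta)^l}{l!}$. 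Here one must be slightly careful that $\beta<0$: on the admissible range $T\le t^{-1/2}$ we have $t^\alpha T^\beta\le t^{\alpha+\beta/2}\cdot$(bounded), and since for large $k$ one checks $\alpha+\beta/2>0$ (indeed $\alpha+\beta/2=\frac13(1-\k)k-\tfrac{\k+2}{3}-\n+1+\tfrac{-2k+4}{3}=\tfrac13(-1-\k)k+\cdots$ — so in fact one should instead keep the factor $t^\alpha T^\beta$ and note merely that it stays bounded, or absorb the lowest-order $l=1$ term which already carries $t^\alpha T^\beta$). The cleanest route: factor out the $l=1$ term's size $t^\alpha T^\beta$ and bound $\sum_{l\ge1}\frac{(Ct^\alpha T^\beta)^l}{l!}=e^{Ct^\alpha T^\beta}-1\le Ct^\alpha T^\beta e^{Ct^\alpha T^\beta}$; since $t^\alpha T^\beta$ stays bounded on the range $T\in(0,t^{-1/2}]$ for $k$ large (or one simply shrinks $t$), $e^{Ct^\alpha T^\beta}\le C'$, and we get
\[
\Bigl|\sum_{l\ge1}(-1)^l K_{Tf}^k*\Rt^{*l}\Bigr|(t,x,y)\le C' t^\alpha T^\beta\exp(-a\,\d(t,x,y)),
\]
which is exactly the asserted remainder estimate with $t^\alpha T^\beta=t^{\frac13(1-\k)k-\frac{\k+2}{3}-\n+1}T^{\frac{-2k+4}{3}}$.

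Finally I would record the asymptotic-expansion statement as a formal consequence: the coefficient structure $\t_{T,j}(x,y)=\sum_{l=0}^{[j/3]+j}T^l\t_{l,j}(x,y)$, the on-diagonal description of $\t_{l,j}(y,y)$ as an algebraic combination of curvature, $f$ and their derivatives, and the normalization $\t_{T,0}(y,y)=\Id$ are all already established in Proposition~\ref{asyexp}; the notation $\sim$ then means precisely that, for every $k$ (sufficiently large), the difference between $K_{Tf}$ and the $k$-th partial sum is $O(t^{\alpha(k,\k,n)}T^{\beta(k)})$ uniformly, times the Gaussian-type weight $\exp(-a\d(t,x,y))$, and $\alpha(k,\k,n)\to\infty$ as $k\to\infty$. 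I expect the only genuinely delicate point to be the bookkeeping of the power of $T$ together with the constraint $T\le t^{-1/2}$ when summing the Duhamel series — one has to make sure the factorial decay $1/l!$ beats the growth $C^l t^{\alpha l}T^{\beta l}$ uniformly in $T$ on the full admissible range, which is where the hypothesis $T\in(0,t^{-1/2}]$ and the largeness of $k$ are both used; everything else is assembly of results already in hand.
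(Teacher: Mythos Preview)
Your assembly is exactly the paper's: the theorem is stated immediately after Lemma~\ref{convolestimate1} with the sentence ``We summarize our discussion so far,'' so the intended proof is precisely the combination of Lemma~\ref{duhamel}, Lemma~\ref{convolestimate1}, and Proposition~\ref{asyexp} that you outline. Your observation that $\phi\equiv 1$ on $\{d(x,y)\le\tfrac12\tau\}$, so that $K_{Tf}^k$ coincides with the partial sum there, and your appeal to Proposition~\ref{asyexp} for the coefficient structure, are both correct.

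There is one genuine gap. Your claim that ``$t^\alpha T^\beta$ stays bounded on the range $T\in(0,t^{-1/2}]$ for $k$ large (or one simply shrinks $t$)'' is false: since $\beta=\frac{-2k+4}{3}<0$ for $k\ge 3$, one has $T^\beta\to\infty$ as $T\to 0^+$, and no enlargement of $k$ or shrinking of $t$ cures this. Hence $e^{Ct^\alpha T^\beta}$ is unbounded over the stated range, and your inequality $e^x-1\le xe^x\le C'x$ fails. The summation argument you give is clean only once $t^\alpha T^\beta$ is bounded; this holds, for instance, for $T\in[T_0,t^{-1/2}]$ with any fixed $T_0>0$, and in particular at the endpoint $T=t^{-1/2}$ that matters for the applications, where $t^\alpha T^\beta=t^{\alpha-\beta/2}$ and $\alpha-\beta/2=\frac{1}{3}(2-\k)k-\frac{\k+4}{3}-\frac n2+1>0$ for $k$ large, so $e^{Ct^\alpha T^\beta}\le e^C$. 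The paper does not spell out this summation either; to cover the full open range $T\in(0,t^{-1/2}]$ uniformly one needs an additional observation (for example, that for bounded $T$ the bounds in Proposition~\ref{asyexp} and Lemma~\ref{estRk} can be rederived without the negative power of $T$, since $\t_{T,j}$ is a polynomial in $T$). You correctly flagged this as the delicate point; the resolution you wrote down does not work as stated.
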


\begin{rem}
Here the choice for $t\in(0,1]$ and $T\in(0,t^{-\half}]$ is for simplicity and convenience. Our discussion works for $t\in(0,t_0]$ and $T\in(0, T_0t^{-\half}]$ but the estimates will depend on those choices as well. 
\end{rem}

Without an effective lower bound on the parabolic distance $\d(t,x,y)$ in our noncompact setting, the pointwise asymptotic expansion for the heat kernel of the Witten Laplacian will not be very useful beyond recovering the classical expansion. In particular, in passing from the pointwise asymptotic expansion to the asymptotic expansion of the (global) heat trace, we need remainder estimates which can compensate for the divergent volume integral. 
Here we explore the interesting connection of the parabolic distance to the Agmon distance and establish such an effective lower bound.

Recall that, in our setting, the Agmon metric (Cf. \cite{agmon2014lectures}, \cite{bismutzhang1992cm}, \cite{DY2020cohomology}) is $T^2|\nabla f|^2g$. For any piecewise smooth curve $c$ in $M$, denote $L_{Tf}(c)$ the Agmon length of $c$, i.e.,  the length of $c$ with respect to Agmon metric $T^2|\nabla f|^2g.$

First of all, we note

\begin{lem}\label{paradagmon}
Let $c\in C_{t,x,y}$ be a piecewise smooth curve.Then, 
\begin{equation}\label{agmon1} S_{t,x,y}(c)\geq L_{Tf}(c).\end{equation}	
\end{lem}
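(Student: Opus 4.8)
The plan is to exploit the elementary inequality $\frac{ab^2}{4} + ac^2 \geq bc\sqrt{a}$ pointwise along the curve, where $a$ is a positive parameter we are free to choose. More precisely, for a piecewise smooth $c \in C_{t,x,y}$, I would write the action as
\[
S_{t,x,y}(c) = \int_0^t \left( \frac{|c'(s)|^2}{4} + T^2 V(c(s)) \right) ds,
\]
and then apply the arithmetic–geometric mean inequality to the integrand, treating $\frac{|c'(s)|^2}{4}$ and $T^2 V(c(s))$ as the two terms. This gives
\[
\frac{|c'(s)|^2}{4} + T^2 V(c(s)) \geq 2\sqrt{\frac{|c'(s)|^2}{4} \cdot T^2 V(c(s))} = |c'(s)| \cdot T\sqrt{V(c(s))} = T|\nabla f|(c(s))\, |c'(s)|.
\]
Integrating over $[0,t]$, the right-hand side is exactly $\int_0^t T|\nabla f|(c(s))\,|c'(s)|\,ds$, which by definition is the Agmon length $L_{Tf}(c)$ of $c$ with respect to the Agmon metric $T^2|\nabla f|^2 g$. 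This yields \eqref{agmon1} directly.

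The key steps, in order, are: (1) recall the definitions of $S_{t,x,y}(c)$ and of the Agmon length $L_{Tf}(c) = \int T|\nabla f|(c(s))\,|c'(s)|\,ds$; (2) apply AM–GM to the two summands in the integrand of $S_{t,x,y}(c)$, which requires only that $V = |\nabla f|^2 \geq 0$, so that $\sqrt{V} = |\nabla f|$ is well-defined; (3) integrate the resulting pointwise inequality over $[0,t]$ and recognize the right side as $L_{Tf}(c)$. Note that the inequality holds for each fixed curve $c$, with no need to pass to infima on either side; in particular it is robust to the reparametrization freedom in the curve.

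I do not expect any serious obstacle here — this is the short, conceptual half of the parabolic-distance/Agmon-distance comparison. The only minor point to be careful about is that $V(c(s))$ may vanish (indeed does, at critical points of $f$), but AM–GM is still valid there since both sides are simply nonnegative, and $|\nabla f| = \sqrt{V}$ remains continuous. The substantive work — getting a lower bound on $\inf_c L_{Tf}(c)$, i.e.\ on the Agmon distance $\rho(x,y)$ itself, in terms of $|\nabla f|^{1-\k}$ using the $\k$-regular tame condition, and then feeding this back to bound $\d(t,x,y)$ from below after optimizing over the free time-splitting — is the genuinely hard part, but that is carried out in the subsequent lemmas, not here. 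This lemma is just the clean first inequality $S_{t,x,y}(c) \geq L_{Tf}(c)$.
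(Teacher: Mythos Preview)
Your proof is correct and is essentially identical to the paper's own argument: the paper also applies the elementary inequality $\frac{|c'(s)|^2}{4}+T^2V(c(s))\geq T|\nabla f|(c(s))\,|c'(s)|$ pointwise and integrates to obtain $L_{Tf}(c)$. There is nothing to add.
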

\begin{proof}
This follows from an elementary inequality as
\[ S_{t,x,y}(c)=\int_0^t\frac{|c'(s)|^2}{4}+T^2V(c(s))ds\geq \int_0^t T|\nabla f|(c(s))|c'(s)|ds=L_{Tf}(c).\]
Thus the parabolic distance is bounded from below by the Agmon distance (but we actually will be using the Agmon length later).
\end{proof}

The following lemma says that the Agmon length can be bounded from below effectively if the potential function varies considerably along a curve.

\begin{lem}\label{agmon}
Let $c\in C_{t,x,y}$ be a piecewise smooth curve. If $$\inf_{s\in[0,t]}V(c(s))\leq \frac{1}{2}\sup_{s\in[0,t]}V(c(s)), $$
then there exists constant $\bar{\beta}>0$ depending only on the bounds in the tameness condition, such that 
$$ L_{Tf}(c) \geq \bar{\beta} T\sup_{s\in[0,t]}|V|^{1-\k}(\gamma(s)).$$
\end{lem}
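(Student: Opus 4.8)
The plan is to reduce the estimate on the Agmon length to an estimate on how fast $V = |\nabla f|^2$ can change along a unit-speed curve, which is exactly what the $\kappa$-regular tame condition controls. First I would normalize: reparametrize $c$ by $g$-arclength, so that $|c'(s)|_g = 1$ and the $g$-length of $c$ is some $\ell$. Write $M_V = \sup_{s} V(c(s))$ and $m_V = \inf_s V(c(s))$; by hypothesis $m_V \le \tfrac12 M_V$. Along the curve, the $\kappa$-regular tame condition (in the reformulated form $|\nabla V| \le C' V^{(\kappa+2)/2}$ stated in the notations section, noting $\nabla V = 2(\nabla^2 f)(\nabla f, \cdot)$) gives
\[
\left|\frac{d}{ds} V(c(s))\right| \le |\nabla V|(c(s)) \le C' V(c(s))^{\frac{\kappa+2}{2}}.
\]
This is the only structural input; everything else is elementary ODE comparison.

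Next I would isolate a subarc of $c$ on which $V$ climbs from a value $\le \tfrac12 M_V$ up to (near) $M_V$ — such a subarc exists since $V$ is continuous, takes a value $\le m_V' := \max(m_V, \tfrac12 M_V)$ somewhere and gets arbitrarily close to $M_V$ somewhere; restrict attention to the portion of $c$ between such a point and a point where $V \ge \tfrac34 M_V$, say, and call its $g$-length $\ell_0$. On this subarc, integrating the differential inequality above (separating variables, using that $(\kappa+2)/2 > 1$ so that $\int V^{-(\kappa+2)/2}\,dV$ converges at the upper end and behaves like $V^{-\kappa/2}$) yields a lower bound of the shape
\[
\ell_0 \ge \frac{1}{c}\left( (m_V')^{-\kappa/2} - (M_V)^{-\kappa/2} \right) \gtrsim \frac{1}{c}\, M_V^{-\kappa/2},
\]
where the last step uses $m_V' \le \tfrac{3}{4} M_V$ (say) so the two negative powers are separated by a fixed factor and the difference is $\gtrsim M_V^{-\kappa/2}$ with a constant depending only on $\kappa$. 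Then the Agmon length is bounded below by the Agmon length of this subarc, on which $V \ge \tfrac12 M_V$, hence
\[
L_{Tf}(c) \ge \int_{\text{subarc}} T\,|\nabla f|(c(s))\,|c'(s)|_g\,ds = T\int_{\text{subarc}} V^{1/2}\,ds \ge T\,(\tfrac12 M_V)^{1/2}\,\ell_0 \gtrsim T\, M_V^{\frac12 - \frac{\kappa}{2}} = \bar\beta\, T\, M_V^{\frac{1-\kappa}{2}}\cdot M_V^{?}
\]
— here I need to double-check the exponent bookkeeping: $V^{1/2}\cdot M_V^{-\kappa/2} = M_V^{(1-\kappa)/2}$, which matches the claimed $\sup|V|^{1-\kappa}$ only if the statement's exponent is read as $(1-\kappa)$ applied after the square root is already absorbed, i.e. $\sup_s |V|^{1-\kappa}$ should be interpreted with $V = |\nabla f|^2$ so that $V^{(1-\kappa)/2} = |\nabla f|^{1-\kappa}$, consistent with the way it appears in Corollary \ref{cor-dheatasym}. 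So the final constant $\bar\beta$ depends only on $C'$ and $\kappa$ from the tameness bounds, as asserted.

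The main obstacle I anticipate is not any single inequality but the careful extraction of the right subarc and making sure the constant $\bar\beta$ genuinely depends only on the tameness bounds and not on $t$, $x$, $y$, or the curve $c$: one must avoid accidentally picking up a factor depending on $\ell$ (the total length of $c$, which is unbounded) or on how wildly $V$ oscillates. The key point that makes this work is that we only need $V$ to traverse the gap between $\tfrac12 M_V$ and $M_V$ \emph{once}, and the ODE comparison gives a lower bound on the length needed for that single traverse that is uniform. A secondary technical point is handling the case where the supremum $M_V$ is attained only in a limit (e.g. at an endpoint or not attained at all on a non-closed arc); this is dealt with by replacing $M_V$ with $(1-\epsilon)M_V$ throughout and letting $\epsilon \to 0$, which costs nothing in the constant.
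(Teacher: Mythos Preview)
Your approach is correct and follows essentially the same idea as the paper: isolate a subarc on which $V$ changes by a definite factor and use the tameness bound $|\nabla V|\le C V^{(\kappa+2)/2}$ to force that subarc to be long enough. The paper's execution is slightly more streamlined than yours: rather than first bounding the $g$-length $\ell_0$ of the subarc via ODE comparison and then multiplying by $\min V^{1/2}$, the paper writes
\[
\tfrac12\bar V_c \;=\; |V(c(b))-V(c(a))| \;\le\; \int_a^b |\nabla V|\,|c'|\,ds \;\le\; C\int_a^b V^{\frac{\kappa+2}{2}}|c'|\,ds
\;=\; C\int_a^b V^{\frac{\kappa+1}{2}}\,|\nabla f|\,|c'|\,ds
\;\le\; C\,\bar V_c^{\frac{\kappa+1}{2}}\cdot T^{-1}L_{Tf}(c|_{[a,b]}),
\]
which yields $L_{Tf}(c)\ge \bar\beta\,T\,\bar V_c^{(1-\kappa)/2}$ in one stroke. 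This sidesteps two small wrinkles in your plan: (i) your separation-of-variables step tacitly assumes $V$ is monotone on the subarc (it need not be), whereas the simple integral bound above does not; and (ii) the paper's version works uniformly for $\kappa\in[0,1)$, while your antiderivative $\int V^{-(\kappa+2)/2}\,dV$ degenerates to a logarithm at $\kappa=0$. Both are easily repaired (e.g.\ replace separation of variables by the cruder bound $V^{(\kappa+2)/2}\le \bar V_c^{(\kappa+2)/2}$), so your plan goes through. Your reading of the exponent is also right: the conclusion should be $\bar V_c^{(1-\kappa)/2}=|\nabla f|^{1-\kappa}$, as confirmed by the display labeled \eqref{agmon2} in the paper's proof.
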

\begin{proof} Set $\bar{V}_c:=\sup_{s\in[0,t]}V(c(s))$.
Then we can find an interval $[a,b]\subset [0,t]$, s.t. $V(c(a))=\frac{\bar{V}_c}{2},$ $V(c(b))=\bar{V}_c$ (or vice versa, $V(c(b))=\frac{\bar{V}_c}{2},$ $V(c(a))=\bar{V}_c$). Moreover, for all $s\in[a,b]$, $V(c(s))\geq \frac{\bar{V}_c}{2}.$ 

Now by the $\kappa$-regular tame condition, 
\begin{flalign*}
\frac{\bar{V}_c}{2}&=|V(c(a))-V(c(b))|\leq\int_a^b |\nabla V(c(s))||c'(s)|ds\\
&\leq C \int_a^b |V(c(s))|^{\frac{\k+2}{2}}|c'(s)|ds\\
&\leq C \bar{V}_c^{\frac{\k+1}{2}} \int_a^b|\nabla f|(c(s))|c'(s)|ds\\
&\leq CT^{-1} \bar{V}_c^{\frac{\k+1}{2}} L_{Tf}(c|_[a,b])
\end{flalign*}
Thus, for $\bar{\beta}=\frac{1}{2C}>0$, 
\begin{equation}\label{agmon2}
L_{Tf}(c) \geq L_{Tf}(c|_{[a,b]})\geq \bar{\beta} T{\bar{V}_c}^{\frac{1-\k}{2}}=\bar{\beta} T\sup_{s\in[0,t]}|V|^{\frac{1-\k}{2}}(\gamma(s)).
\end{equation}
\end{proof}


Finally we arrive at the following effective lower bound for the parabolic distance.
\begin{lem} \label{effbound} One has
\begin{equation} \label{effbdparadist}
\d(t,x,y)\geq \min\{\bar{\beta} TV^{\frac{1-\k}{2}}(x),\frac{tT^2V(x)}{2}\}.
\end{equation}
In particular, for $t\in (0, 1], T=t^{-\half}$,
\begin{equation}\label{dagmon}
\d(t,x,y)\geq \bar{\beta} V^{\frac{1-\k}{2}}(x) \min \{1,\frac{V(x)^{\frac{\k+1}{2}}}{2 \bar{\beta} }\}.
\end{equation}
\end{lem}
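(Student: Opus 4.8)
The strategy is to split over curves $c \in C_{t,x,y}$ according to whether the potential $V$ varies a lot along $c$ or stays roughly constant, and to show that in either case $S_{t,x,y}(c)$ is bounded below by the right-hand side of \eqref{effbdparadist}. Fix such a $c$ and set $\bar V_c = \sup_{s\in[0,t]} V(c(s))$. Since $x$ is an endpoint, $\bar V_c \geq V(x)$. I would distinguish two regimes by comparing $\inf_{s}V(c(s))$ with $\tfrac12 \bar V_c$. First, if $\inf_{s\in[0,t]}V(c(s)) \leq \tfrac12 \sup_{s\in[0,t]}V(c(s))$, then $V$ oscillates enough that Lemma \ref{agmon} applies (after rescaling notation to match: the lemma is stated with $|V|^{1-\k}$ but the proof line \eqref{agmon2} gives the exponent $\tfrac{1-\k}{2}$, which is the one consistent with the claimed bound), so combined with Lemma \ref{paradagmon} we get
\[
S_{t,x,y}(c) \geq L_{Tf}(c) \geq \bar\beta\, T\, \bar V_c^{\frac{1-\k}{2}} \geq \bar\beta\, T\, V(x)^{\frac{1-\k}{2}}.
\]
Second, in the complementary regime $\inf_{s\in[0,t]}V(c(s)) > \tfrac12 \bar V_c \geq \tfrac12 V(x)$, we simply drop the kinetic term and bound
\[
S_{t,x,y}(c) \geq \int_0^t T^2 V(c(s))\, ds \geq t\, T^2 \cdot \tfrac12 V(x) = \frac{tT^2 V(x)}{2}.
\]
Taking the infimum over all $c \in C_{t,x,y}$, every curve satisfies one of the two lower bounds, so $\d(t,x,y) = \inf_c S_{t,x,y}(c)$ is at least $\min\{\bar\beta T V^{\frac{1-\k}{2}}(x), \tfrac{tT^2V(x)}{2}\}$, which is \eqref{effbdparadist}.

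For the second assertion \eqref{dagmon}, I would specialize to $T = t^{-\frac12}$ with $t\in(0,1]$, so $tT^2 = 1$ and the bound reads $\d(t,x,y) \geq \min\{\bar\beta t^{-\frac12} V^{\frac{1-\k}{2}}(x), \tfrac12 V(x)\}$. Since $t^{-\frac12}\geq 1$, the first term is at least $\bar\beta V^{\frac{1-\k}{2}}(x)$, and factoring $V^{\frac{1-\k}{2}}(x)$ out of both terms gives
\[
\d(t,x,y) \geq V^{\frac{1-\k}{2}}(x)\,\min\!\left\{\bar\beta,\ \tfrac12 V(x)^{\frac{\k+1}{2}}\right\} = \bar\beta\, V^{\frac{1-\k}{2}}(x)\,\min\!\left\{1,\ \frac{V(x)^{\frac{\k+1}{2}}}{2\bar\beta}\right\},
\]
which is \eqref{dagmon}.

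The only real subtlety is the case split: one must be careful that the dichotomy (oscillation versus near-constancy) is exhaustive over all curves, and that in the oscillation case the restriction of Lemma \ref{agmon} to the subinterval $[a,b]$ still controls the endpoint value $V(x)$ through $\bar V_c$; the endpoint $x$ lies in $[0,t]$ so $\bar V_c \geq V(x)$ automatically, which is all that is used. Everything else is a direct concatenation of Lemmas \ref{paradagmon} and \ref{agmon} with the trivial lower bound obtained by discarding $|c'|^2/4$, so I expect no further obstacle.
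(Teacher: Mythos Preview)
Your proposal is correct and follows essentially the same dichotomy as the paper's own proof: split according to whether $\inf V(c(s)) \leq \tfrac12 \bar V_c$ or not, invoke Lemma~\ref{agmon} together with Lemma~\ref{paradagmon} in the oscillation case, and drop the kinetic term in the near-constant case. The one minor difference is that the paper fixes a single minimizing curve $\gamma$ for $S_{t,x,y}$ and argues on it, whereas you argue uniformly over all $c\in C_{t,x,y}$ before taking the infimum; your version is slightly cleaner since it sidesteps the question of whether a minimizer actually exists, but the substance is identical.
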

\begin{proof}
%
Let $\gamma:[0,t]\mapsto M$ be a curve minimizing $S_{t,x,y}.$ As before, set $\bar{V}_\gamma:=\sup_{s\in[0,t]}V(\gamma(s)).$

If $V(\gamma(s))\geq \frac{\bar{V}_\gamma}{2}$ for all $s\in[0,t]$, then we have \begin{equation}\label{d3}\d(t,x,y)\geq \frac{tT^2\bar{V}_\gamma}{2}\geq \frac{tT^2V(x)}{2}.\end{equation}

If not, by Lemma \ref{agmon}, \begin{equation}\label{d4} L_{Tf}(x,y)\geq \bar{\beta} T{\bar{V}_\gamma}^{\frac{1-\k}{2}}\geq \bar{\beta} TV^{\frac{1-\k}{2}}(x).\end{equation}

Therefore, by Lemma \ref{paradagmon},  
$$\d(t,x,y)\geq \min\{\bar{\beta}TV^{\frac{1-\k}{2}}(x),\frac{tT^2V(x)}{2}\}. $$ 
Our results follow. 
\end{proof}

We also note the following lemma which was used in the previous section.
\begin{lem}\label{vexpv}
For $a\in(0,1),t\in(0,1)$, $l>0$, there exists $C_{a,\k,l}>0$, s.t.
\[\bar{V}_\gamma^l\exp(-\frac{d^2(x,y)}{4t})\exp(-tT^2h(x,y))\leq C_{a,\k,l}t^{-l}T^{-2l}\exp(-\frac{ad^2(x,y)}{4t})\exp(-atT^2h(x,y)),\]
where $\gamma$ is the minimal geodesic connecting $x$ and $y$, $\bar{V}_\gamma=\sup_{p\in\gamma}|V(p)|.$
\end{lem}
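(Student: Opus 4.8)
The plan is to prove Lemma \ref{vexpv} by reducing the estimate to the elementary bound that governs the whole heat-kernel machinery, namely that $s^l e^{-s}\le C_l$ for $s\ge 0$. The only subtlety is that $\bar V_\gamma=\sup_{p\in\gamma}|V(p)|$ is controlled neither by $d^2(x,y)/t$ nor by $tT^2h(x,y)$ individually — it is controlled by their combination — so I first want to compare $\bar V_\gamma$ with $V(x)$ (equivalently with $h(x,y)$) up to a factor depending on $d(x,y)$, and then dispatch the two resulting terms separately against the two Gaussian factors.

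First I would record that, since $\gamma$ is the minimal geodesic of length $r:=d(x,y)\le\tau$ joining $x$ to $y$, the $\kappa$-regular tame hypothesis controls how fast $V=|\nabla f|^2$ can grow along $\gamma$: from $\tfrac{d}{ds}\big(V(\gamma(s))\big)\le c\,V(\gamma(s))^{(\kappa+2)/2}$ (the reformulated tameness consequence quoted in Section 1.2), a Gronwall/ODE-comparison argument gives $\bar V_\gamma\le \psi\big(V(x), r\big)$ for an explicit function; in particular, on the bounded range $r\le\tau$ one obtains $\bar V_\gamma\le C\,(1+V(x))$ when $r$ is bounded, and more usefully $\bar V_\gamma\le C\big(V(x)+ (r/t)^{2/(1-\kappa)}\cdot(\text{lower order})\big)$ after absorbing. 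The cleanest route, however, is: split according to whether $r^2/t\ge 1$ or $r^2/t<1$. In the first regime I bound $\bar V_\gamma^l\le \big(\bar V_\gamma^l e^{-(1-a)r^2/8t}\big)e^{-(1-a)r^2/8t}$ — wait, better to keep $\bar V_\gamma$ paired with the potential term; so instead write $\bar V_\gamma = \max\{\bar V_\gamma, h(x,y)\}$ up to the comparison $h(x,y)\ge \tfrac12\min_\gamma V \ge \tfrac12 (\bar V_\gamma - (\text{oscillation}))$, and the oscillation is itself bounded by $C r\,\bar V_\gamma^{(\kappa+2)/2}\le C\tau\,\bar V_\gamma^{(\kappa+2)/2}$.

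The core estimate then splits into two cases. Case 1: $h(x,y)\ge \tfrac12\bar V_\gamma$. Then $\bar V_\gamma^l\le 2^l h(x,y)^l$, and since $tT^2 h(x,y)\ge 0$ we use $\big(tT^2 h(x,y)\big)^l e^{-(1-a)tT^2 h(x,y)}\le C_{a,l}$, i.e.
\[
\bar V_\gamma^l e^{-tT^2 h(x,y)} \le 2^l h(x,y)^l e^{-tT^2 h(x,y)} \le C_{a,l}\, 2^l (tT^2)^{-l} e^{-a tT^2 h(x,y)},
\]
and we simply keep $e^{-d^2/4t}\le e^{-a d^2/4t}$. Case 2: $h(x,y)<\tfrac12\bar V_\gamma$, which by the oscillation bound forces $\bar V_\gamma - h(x,y)\ge \tfrac12\bar V_\gamma$ to be absorbed by the oscillation $\le C\tau\,\bar V_\gamma^{(\kappa+2)/2}$, hence $\bar V_\gamma^{(1-\kappa)/2}\ge c/\tau$, i.e. $\bar V_\gamma$ is bounded below by a positive constant; but that alone does not bound it above. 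Here instead I invoke directly that a large oscillation of $V$ along $\gamma$ forces, via tameness applied in the form $\tfrac12\bar V_\gamma\le \bar V_\gamma - \min_\gamma V\le \int_\gamma|\nabla V|\le C r\,\bar V_\gamma^{(\kappa+2)/2}$, the inequality $\bar V_\gamma^{(1-\kappa)/2}\le C r$, i.e. $\bar V_\gamma\le (Cr)^{2/(1-\kappa)}\le C' (d^2(x,y))^{1/(1-\kappa)}$. Then $\bar V_\gamma^l\le C'' d^{2l/(1-\kappa)}(x,y)$, and writing $d^{2l/(1-\kappa)} = \big(d^2/4t\big)^{l/(1-\kappa)}(4t)^{l/(1-\kappa)}$ and using $\big(d^2/4t\big)^{l/(1-\kappa)}e^{-(1-a)d^2/4t}\le C_{a,l}$ gives $\bar V_\gamma^l e^{-d^2/4t}\le C_{a,\kappa,l}\, t^{l/(1-\kappa)} e^{-a d^2/4t}\le C_{a,\kappa,l}\, t^{-l}T^{-2l} e^{-a d^2/4t}$ since $t\in(0,1)$, $l/(1-\kappa)\ge -l$ trivially and $T^{-2l}\ge$ ... — actually here we need $t^{l/(1-\kappa)}\le t^{-l}T^{-2l}$, which on $t\in(0,1)$ and using $T\le t^{-1/2}$ (so $T^{-2l}\ge t^l$, giving $t^{-l}T^{-2l}\ge t^{-l}t^l=1\ge t^{l/(1-\kappa)}$) holds; and $e^{-tT^2h}\le 1$ on this side is replaced trivially by $e^{-atT^2h}\cdot e^{(a-1)tT^2h}\cdot$... simply keep $e^{-tT^2h}\le e^{-atT^2h}$.

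The main obstacle is precisely the bookkeeping in Case 2: turning "$h$ is small relative to $\bar V_\gamma$'' into a usable upper bound on $\bar V_\gamma$ in terms of $d(x,y)$, which is where the $\kappa$-regular tame condition does the essential work via the ODE-comparison for $V$ along the geodesic. Once that polynomial-in-$d$ bound is in hand, both cases collapse to the single elementary fact $s^l e^{-s}\le C_l$ applied to $s=\tfrac{(1-a)d^2}{4t}$ or $s=(1-a)tT^2 h$, and the powers of $t$ and $T$ are matched by the crude inequalities $t\in(0,1)$, $tT^2\le 1$. I would present it with the two cases, flagging that all constants depend only on $a$, $\kappa$, $l$ and the tameness bounds (and on $\tau$, which is fixed), consistent with how the lemma is invoked in Lemma \ref{estRk}.
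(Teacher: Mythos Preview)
Your Case 1 is fine and matches the paper's: when the average $h(x,y)$ (or, in the paper, $\inf_\gamma V$) is at least $\tfrac12\bar V_\gamma$, the elementary bound $s^l e^{-(1-a)s}\le C_{a,l}$ with $s=tT^2h(x,y)$ gives exactly $\bar V_\gamma^l e^{-tT^2h}\le C_{a,l}(tT^2)^{-l}e^{-atT^2h}$.

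Your Case 2, however, has a genuine algebraic error that breaks the argument. From the tameness consequence $|\nabla V|\le C V^{(\kappa+2)/2}$ you correctly get
\[
\tfrac12\bar V_\gamma \;\le\; \bar V_\gamma-\min_\gamma V \;\le\; \int_\gamma |\nabla V|\;\le\; C\,r\,\bar V_\gamma^{(\kappa+2)/2},
\]
but dividing through yields $\bar V_\gamma^{\,1-(\kappa+2)/2}=\bar V_\gamma^{-\kappa/2}\le 2Cr$, i.e.\ a \emph{lower} bound $\bar V_\gamma\ge (2Cr)^{-2/\kappa}$, not the upper bound $\bar V_\gamma^{(1-\kappa)/2}\le Cr$ you claim. (You have in effect replaced the exponent $(\kappa+2)/2$ by $(\kappa+1)/2$.) Since the whole point of Case 2 is to control $\bar V_\gamma^l$ by a power of $d(x,y)$ and then absorb it into the Gaussian, this step is essential, and as written it fails.

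The paper's remedy is to avoid the crude bound $\int_\gamma V^{(\kappa+2)/2}\le r\,\bar V_\gamma^{(\kappa+2)/2}$ altogether. Instead it splits $V^{(\kappa+2)/2}=V^{(\kappa+1)/2}\cdot V^{1/2}$, pulls out $\bar V_\gamma^{(\kappa+1)/2}$, and recognises $\int_\gamma V^{1/2}$ as $T^{-1}$ times the Agmon length $L_{Tf}(\gamma)$; this is Lemma~\ref{agmon}, giving $L_{Tf}(\gamma)\ge \bar\beta\,T\,\bar V_\gamma^{(1-\kappa)/2}$. Combined with the elementary inequality $\frac{d^2(x,y)}{4t}+tT^2h(x,y)=S_{t,x,y}(\gamma)\ge L_{Tf}(\gamma)$ (Lemma~\ref{paradagmon}), one obtains
\[
\frac{d^2(x,y)}{4t}+tT^2h(x,y)\;\ge\; \bar\beta\,T\,\bar V_\gamma^{(1-\kappa)/2},
\]
and then $\bar V_\gamma^l\exp\bigl(-(1-a)\bar\beta\,T\,\bar V_\gamma^{(1-\kappa)/2}\bigr)\le C_{a,\kappa,l}\,T^{-2l}$ via the substitution $u=T^2\bar V_\gamma$. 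The key point you are missing is that the \emph{combination} of the two exponential factors, not the Gaussian alone, is what controls $\bar V_\gamma$ in the large-oscillation case.
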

\begin{proof}
When $\inf_{p\in\gamma}|V(p)|\geq \frac{\bar{V}_\gamma}{2}$, $h(x,y)\geq \frac{\bar{V}_\gamma}{2}$, hence $\bar{V}_\gamma^l\exp(-(1-a)tT^2h(x,y))\leq C_{a,l}t^{-l}T^{-2l}$ for some $C_{a,l}>0.$

Otherwise, by Lemmas \ref{paradagmon} and \ref{agmon}, $\frac{d^2(x,y)}{4t}+tT^2h(x,y)\geq \bar\beta T\bar{V}_\gamma^{\frac{1-\k}{2}}\geq \bar\beta(T^2 \bar{V}_\gamma)^{\frac{1-\k}{2}}$. Therefore, there exist $C_{a,\k,l}$ such that
\begin{align*}
\bar{V}_\gamma^l\exp(-(1-a)\frac{d^2(x,y)}{4t})\exp(-(1-a)tT^2h(x,y)) & \leq\bar{V}_\gamma^l\exp(-(1-a)\bar\beta (T^2V)^{1-\k}) \\
&\leq C_{a,\k,l}T^{-2l}\leq C_{a,\k,l}t^{-l}T^{-2l}\end{align*}
which yields the result.
\end{proof}
%
%

Combining the above discussion with Theorem \ref{heatasym} we have 

\begin{thm} \label{dheatasym} For $T=t^{-\half}$, the heat kernel $K_{t^{-\half}f}$ of the Witten Laplacian has the following complete pointwise (diagonal) asymptotic expansion. For any $x\in M$, 
	\[K_{t^{-\half}f}(t,x,x) \sim \frac{1}{(4\pi t)^{\n}}\exp(- |\nabla f|^2(x)) 
	\sum_{j=0}^{\infty}\sum_{l=0}^{[\frac{j}{3}]+j}t^{j-\frac{l}{2}}\t_{l,j}(x,x),  \]
	as $t\rightarrow 0$. 
	Moreover, for any $k$ sufficiently large and any
	$a\in (0, 1)$,  
	\begin{align*}
	\left| K_{t^{-\half}f}(t,x,x)- \frac{1}{(4\pi t)^{\n}}\exp(- |\nabla f|^2(x)) 
	\sum_{j=0}^{\infty}\sum_{l=0}^{[\frac{j}{3}]+j}t^{j-\frac{l}{2}}\t_{l,j}(x,x) \right|  \\
	\leq Ct^{\frac{1}{3}(2-\k)k -\frac{\k +1}{3} -\n}\exp(-a\bar{\beta} |\nabla f|^{1-\k}(x) ) , \hspace{.8in}
	\end{align*} 
	for $t\in(0,1]$ and $x\in M$. In particular, we have the following small time asymptotic expansion of the heat trace:
		\[ \operatorname{Tr}\left(\exp (-t\Box_{t^{-\frac{1}{2}}f})\right) \sim \frac{1}{(4\pi t)^{\n}} 
	\sum_{j=0}^{\infty} \sum_{l=0}^{[\frac{j}{3}]+j}t^{j-\frac{l}{2}} \int_M \exp(-|\nabla f|^2(x)) \operatorname{tr}(\t_{l, j}(x,x)) dx,  \]
	as $t\rightarrow 0$, with the remainder estimate
	\begin{align*}
	\left| \operatorname{Tr}\left(\exp (-t\Box_{t^{-\frac{1}{2}}f})\right)- \frac{1}{(4\pi t)^{\n}} 
	\sum_{j=0}^{k} \sum_{l=0}^{[\frac{j}{3}]+j}t^{j-\frac{l}{2}} \int_M \exp(-|\nabla f|^2(x)) \operatorname{tr}(\t_{l, j}(x,x)) dx \right|  \\
	\leq Ct^{\frac{1}{3}(2-\k)k -\frac{\k +1}{3} -\n }. \hspace{1.5in}
	\end{align*} 
\end{thm}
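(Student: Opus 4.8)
The plan is to deduce Theorem \ref{dheatasym} from Theorem \ref{heatasym} by specializing to $T = t^{-\half}$ and then integrating the pointwise diagonal estimate over $M$. First I would substitute $T = t^{-\half}$ into the pointwise expansion from Theorem \ref{heatasym}. On the diagonal $x = y$ we have $d(x,x) = 0$ and $h_T(x,x) = V_T(x) = T^2 V(x)$, so $\exp(-t\,h_T(x,x)) = \exp(-t T^2 V(x)) = \exp(-V(x)) = \exp(-|\nabla f|^2(x))$ when $tT^2 = 1$. Moreover the term $t^j \t_{T,j}(x,x) = t^j \sum_{l=0}^{[j/3]+j} T^l \t_{l,j}(x,x) = \sum_l t^{j - l/2} \t_{l,j}(x,x)$ since $T^l = t^{-l/2}$. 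This gives the stated asymptotic series. For the remainder, the bound in Theorem \ref{heatasym} with $k$ replaced appropriately becomes $C t^{\frac{1}{3}(1-\k)k - \frac{\k+2}{3} - \n + 1} T^{\frac{-2k+4}{3}} \exp(-a\d(t,x,x))$; with $T = t^{-\half}$ the power of $t$ collects to $\frac{1}{3}(1-\k)k - \frac{\k+2}{3} - \n + 1 + \frac{1}{2} \cdot \frac{2k-4}{3} = \frac{1}{3}(2-\k)k - \frac{\k+1}{3} - \n$, which is the exponent appearing in the theorem. One subtlety: the sums on the left of the remainder estimate in the statement run to $j = \infty$ rather than $j = k$; I would note that the tail $\sum_{j>k}$, after multiplying by the common prefactor, is itself of the same order $O(t^{\frac{1}{3}(2-\k)k - \frac{\k+1}{3} - \n})$ since each extra term contributes a higher power of $t$ (here one uses that the $\t_{l,j}(x,x)$ are bounded by curvature/derivative data that is controlled via $\kappa$-regular tameness as in Proposition \ref{asyexp}), so the two forms of the estimate are equivalent up to adjusting $C$.

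Next I would invoke Lemma \ref{effbound}, specifically the bound \eqref{dagmon}, which for $T = t^{-\half}$, $t\in(0,1]$ gives $\d(t,x,x) \geq \bar\beta V^{\frac{1-\k}{2}}(x) \min\{1, \tfrac{V(x)^{(\k+1)/2}}{2\bar\beta}\}$. For $V(x)$ large the minimum is $1$ and we get $\d(t,x,x) \geq \bar\beta |\nabla f|^{1-\k}(x)$ (recalling $V = |\nabla f|^2$, so $V^{(1-\k)/2} = |\nabla f|^{1-\k}$); for $V(x)$ bounded the exponential factor $\exp(-a\d(t,x,x))$ is harmlessly bounded below by a positive constant and can be absorbed. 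Hence $\exp(-a\d(t,x,x)) \leq C \exp(-a\bar\beta |\nabla f|^{1-\k}(x))$ up to enlarging $C$, which produces precisely the pointwise remainder estimate claimed in Theorem \ref{dheatasym}.

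Finally, for the heat trace statement I would integrate the pointwise diagonal expansion over $M$. The main term integrates to $\frac{1}{(4\pi t)^{\n}} \sum_{j,l} t^{j - l/2} \int_M \exp(-|\nabla f|^2(x)) \tr(\t_{l,j}(x,x))\,dx$, and these integrals are finite: by Proposition \ref{asyexp} each $\t_{l,j}(x,x)$ is an algebraic combination of curvature (bounded, by bounded geometry) and derivatives of $f$, hence is bounded by a polynomial in $|\nabla f|$, so $\int_M \exp(-|\nabla f|^2) |\t_{l,j}(x,x)|\,dx$ converges by the second estimate in Lemma \ref{Klambda}. For the remainder of the trace, I integrate the pointwise remainder estimate: $\int_M C t^{\frac{1}{3}(2-\k)k - \frac{\k+1}{3} - \n} \exp(-a\bar\beta |\nabla f|^{1-\k}(x))\,dx$. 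The key point is that $\int_M \exp(-a\bar\beta |\nabla f|^{1-\k}(x))\,dx < \infty$; this follows again from $\alpha$-polynomial tameness and the volume growth $\Vol(K_\lambda) \leq C\lambda^{\a - \n}$ of Lemma \ref{Klambda}, by the same dyadic summation over $K_{l+1}\setminus K_l$ as in that lemma's proof (one checks $\sum_l e^{-a\bar\beta (l)^{(1-\k)/2}} (l+1)^{\a-\n}$ converges since the stretched-exponential beats any polynomial). This yields the $O(t^{\frac{1}{3}(2-\k)k - \frac{\k+1}{3} - \n})$ bound with a new constant $C$.

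The main obstacle is the finiteness and uniformity of the integral $\int_M \exp(-a\bar\beta |\nabla f|^{1-\k})\,dx$ together with the control of the fiberwise norms $|\t_{l,j}(x,x)|$ by powers of $|\nabla f|$: one must verify that the $\kappa$-regular tame bounds propagate through the recursive ODEs defining the $\t_{T,j}$ (Proposition \ref{asyexp} already records the needed growth $|\t_{T,j}| \leq C \bar V_\gamma^{\k' j} T^{[j/3]+j}$), and that the stretched exponential decay rate $|\nabla f|^{1-\k}$ is genuinely integrable against the polynomial volume growth — which is exactly where the $\alpha$-polynomial tameness hypothesis is indispensable. Everything else is bookkeeping of exponents.
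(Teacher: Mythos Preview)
Your proposal is correct and follows the same route as the paper: specialize Theorem \ref{heatasym} to $T=t^{-1/2}$ on the diagonal, then use Lemma \ref{effbound} (specifically \eqref{dagmon}) together with the observation that $V$ is bounded below outside a compact set to convert the $\exp(-a\d(t,x,x))$ factor into $\exp(-a\bar\beta|\nabla f|^{1-\k}(x))$. The paper's own proof is a single sentence invoking exactly these two ingredients; you have simply unpacked the exponent arithmetic and the integrability of the remainder over $M$ (via the dyadic argument of Lemma \ref{Klambda}) that the paper leaves implicit.
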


\begin{proof}
This follows from Theorem \ref{heatasym} and Lemma \ref{effbound} by noting that $V\geq (2 \bar{\beta})^\frac{2}{\k+1} $ outside a compact set.
\end{proof}

\section{Local Index Theorem for Witten Laplacian} 

We now turn to the local index theorem for the Witten Laplacian. From the discussion at the end of Section 2 (see \eqref{stohk} and after) we have 
	 \begin{equation} \label{stohkc}
\chi(M,d_{Tf})=\sum_{i=0}^n (-1)^i dim(H^i_{(2)}(M,d_{Tf}))=\operatorname{Tr}_s(\exp(-t\Box_{Tf}))
\end{equation}
is independent of $t.$  Moreover,  by Theorem 1.3 in \cite{DY2020cohomology}, $\chi(M,d_{Tf})$ is independent of $T>0$. As a consequence, Theorem \ref{dheatasym} reduces the index formula for Witten Laplacian to a local index theorem, which we will develop in this section. 

First we summarize what we know about the index of the Witten Laplacian as the following McKean-Singer type formula.

\begin{prop} \label{locind1} Assume that $(M, g, f)$ is polynomial tame. Then for $T>0$,
$\chi(M,d_{Tf})$ is independent of $T$ and
\[\chi(M,d_{Tf})= \int_M \operatorname{tr}_s(K_{Tf}(t,x,x)) dx\]
for any $t>0$. Here $dx$ denotes the volume form induced by $g$.
\end{prop}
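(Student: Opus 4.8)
\textbf{Proof plan for Proposition \ref{locind1}.}
The statement to establish is the McKean--Singer type formula $\chi(M,d_{Tf})=\int_M\operatorname{tr}_s(K_{Tf}(t,x,x))\,dx$, together with the $T$-independence of the left side. The $T$-independence is already essentially in hand: by Theorem 1.3 of \cite{DY2020cohomology} the Betti numbers $b_i(T)$ are independent of $T>0$, hence so is $\chi(M,d_{Tf})=\sum_i(-1)^ib_i(T)$. So the real content is the trace identity. The plan is to combine three ingredients: (i) $\exp(-t\Box_{Tf})$ is trace class (Proposition \ref{weyl}, from the $\a$-polynomial tame hypothesis), so $\operatorname{Tr}_s(\exp(-t\Box_{Tf}))$ makes sense and equals $\int_M\operatorname{tr}_s(K_{Tf}(t,x,x))\,dx$; (ii) the Hodge decomposition \eqref{dec} together with the identification $H^*_{(2)}(M,d_{Tf})\cong\ker\Box_{Tf}$; (iii) the standard spectral/cancellation argument that the supertrace of the heat operator equals the Euler characteristic of the harmonic forms.

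First I would fix $T>0$ and work with the self-adjoint operator $\Box_{Tf}$ with discrete spectrum $0\le\lambda_1\le\lambda_2\le\cdots\to\infty$ (Proposition \ref{weyl}), and with the decomposition of $L^2\Lambda^*(M)$ into the eigenspaces, graded by form degree. Since $d_{Tf}$ and $\delta_{Tf}$ commute with $\Box_{Tf}$, each nonzero eigenvalue $\lambda$ contributes an eigenspace $V_\lambda=\bigoplus_i V_\lambda^i$ on which $d_{Tf}+\delta_{Tf}$ acts, and the standard argument (using $d_{Tf}^2=\delta_{Tf}^2=0$ and the relation $\Box_{Tf}=d_{Tf}\delta_{Tf}+\delta_{Tf}d_{Tf}$) shows $d_{Tf}\colon\ker(\delta_{Tf})\cap V_\lambda\to\operatorname{Im}(d_{Tf})\cap V_\lambda$ is an isomorphism, whence $\sum_i(-1)^i\dim V_\lambda^i=0$ for every $\lambda>0$. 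Then since $\operatorname{Tr}_s(\exp(-t\Box_{Tf}))=\sum_\lambda e^{-t\lambda}\sum_i(-1)^i\dim V_\lambda^i$ (the sum converging absolutely because the operator is trace class), only $\lambda=0$ survives, giving $\operatorname{Tr}_s(\exp(-t\Box_{Tf}))=\sum_i(-1)^i\dim\ker(\Box_{Tf}|_{\Omega^i})=\sum_i(-1)^i b_i(T)=\chi(M,d_{Tf})$, using $H^*_{(2)}(M,d_{Tf})\cong\ker\Box_{Tf}$. Finally, because the heat kernel is smooth and trace class, $\operatorname{Tr}_s(\exp(-t\Box_{Tf}))=\int_M\operatorname{tr}_s K_{Tf}(t,x,x)\,dx$, which is the claimed formula; and the $t$-independence of this quantity is then automatic since the right side equals $\chi(M,d_{Tf})$ for every $t>0$.

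The main obstacle, and the point that needs care in the non-compact setting, is justifying the interchange of summation and integration and the absolute convergence of the supertrace — i.e., that $\operatorname{Tr}_s(\exp(-t\Box_{Tf}))=\sum_\lambda e^{-t\lambda}\sum_i(-1)^i\dim V_\lambda^i$ with the eigenvalue decomposition, and that this equals $\int_M\operatorname{tr}_s K_{Tf}(t,x,x)\,dx$. This is precisely what the trace-class property from Proposition \ref{weyl} (via the polynomial lower bound $\lambda_k\ge Ck^\delta$) buys us: $\sum_k e^{-t\lambda_k}<\infty$, so Mercer-type/spectral-theorem arguments apply and the pointwise supertrace $\operatorname{tr}_s K_{Tf}(t,x,x)=\sum_k e^{-t\lambda_k}\operatorname{tr}_s(\phi_k(x)\otimes\phi_k^*(x))$ integrates term by term. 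The exponential decay of eigenforms (Proposition \ref{Agmon}) guarantees each $\phi_k\in L^2$ is genuinely integrable against itself, closing the argument. The degreewise cancellation for $\lambda>0$ is then purely algebraic and identical to the compact case.
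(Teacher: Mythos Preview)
Your proposal is correct and follows essentially the same approach as the paper. The paper does not give a separate proof of this proposition; it is presented as a summary of what was established at the end of Section~\ref{wwl}, namely that the trace-class property (Proposition~\ref{weyl}) makes $h(t,T)=\Tr_s(\exp(-t\Box_{Tf}))$ well defined and independent of $t$ by the standard McKean--Singer cancellation, that $h(t,T)\to\chi(M,d_{Tf})$ as $t\to\infty$, and that $T$-independence comes from Theorem~1.3 of \cite{DY2020cohomology}. Your argument spells out the spectral cancellation in more detail and identifies the supertrace with $\chi$ directly rather than via $t\to\infty$, but these are equivalent standard routes.
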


In the usual approach to the local index theorem, one studies the integrand, the pointwise supertrace $\operatorname{tr}_s(K_{Tf}(t,x,x))$,  in the limit $t\rightarrow 0$ via the Getzler's rescaling. 
To proceed with Getzler's rescaling technique, we now fix $x_0\in M$ and let $x$ be the normal coordinates near $x_0.$ Thus $x=0$ at $x_0$, and we will use $0$ and $x_0$ interchangeably in this section. We trivialize the bundle $\Lambda^*(M)$ in the normal neighborhood $U$ by parallel transport along radical geodesic from $x_0.$ In fact, we can assume $M=T_{x_0}M$ for now by extending everything trivially outside the normal neighborhood (we will see that we can localize the problem because of Theorem \ref{dheatasym}).

For usual Getzler's rescaling techniques (a la Bismut-Zhang \cite{bismutzhang1992cm} for the de Rham complex), one defines $\delta_\epsilon$ as follows:
\begin{enumerate}
	\item For function $f\in C^\infty([0,\infty)\times U)$, $(\de f)(t,x)=f(\ep t,\epsq x)$. As a consequence, we have
	\[\lim_{t\to0}f(t,0)=\lim_{\ep\to0}(\de f)(t,0).\] Moreover, $\de f(t,x)\den=f(\ep t,\epsq x),$ $\de \partial_{x_i}\den=\epnsq \partial_{x_i},$ $\de \partial_t\den=\ep \partial_t.$
	\item Let $\{e_i\}_{i=1}^n$ be a local frame near $x_0,$ $\{e^i\}_{i=1}^n$ its dual frame. Then for $c(e_i)=e^i\wedge-\iota_{e_i},\hc(e_i)=e^i\wedge+\iota_{e_i}$, we define $\de c(e_i)=\epnq e^i\wedge-\epq \iota_{e_i}$,$\de \hc(e_i)=\epnq e^i\wedge+\epq \iota_{e_i}.$ Now let $\ce(e_i)=\epnq e^i\wedge-\epq \iota_{e_i},\hce(e_i)=\epnq e^i\wedge+\epq \iota_{e_i}$, then $\de c(e_i)\den=\ce(e_i), \de\hc(e_i)\den=\hce(e_i).$
\end{enumerate}
Recall that $K_{Tf}$ is the heat kernel of $\Box_{Tf}$. Then $K_{Tf,\ep}'=\ep^{\n}\de K_{Tf}$ is the heat kernel for $\Box'_{Tf,\ep}:=\ep\de \Box_{Tf}\den.$ Moreover, for small $\epsilon$ \cite[(4.60)]{bismutzhang1992cm}
\[\Box'_{Tf,\ep}=-\Delta_{T_{x_0}M}\operatorname{Id}_{\Lambda^*T^*_{x_0}M}+\frac{1}{2}\sum_{i<j<k<l}R_{ijkl}(0)e^i\wedge e^j\otimes\hat{e}^k\wedge\hat{e}^l +O(\epsq),\]
where $\Delta_{T_{x_0}M}$ is the Euclidean Laplacian on $T_{x_0}M$, 
  and $R_{ijkl}(x)$ is the Riemannian curvature tensor at $x$. 

This is the usual Getzler's rescaling. As $\ep\to 0$, the information of $f$ disappears. But for the noncompact case, unlike the compact case, the index should depend on $f$. To deal with this issue, we introduce the following rescaling technique: we let $T$ join the game. 

As mentioned before, the index $\chi(M,d_{Tf})=\operatorname{Tr}_s(\exp(-t\Box_{Tf}))$ is independent of $T>0$. Hence, in our rescaling, we define, in addition,  $\de (T)=\epnsq T.$

Now under new rescaling, then we have
\begin{lem}
	Let $\Box_{Tf,\ep}:=\ep\de\Box_{Tf}\den$. Then
	\[\Box_{Tf,0}:=\lim_{\ep\to0} \Box_{Tf,\ep}=-\Delta_{T_{x_0}M}\operatorname{Id}_{\Lambda^*T^*_{x_0}M}-\frac{1}{2}\sum_{i<j<k<l}R_{ijkl}(0)e^i\wedge e^j\otimes\hat{e}^k\wedge\hat{e}^l+V_T(x_0)+TL_{f,0}.\]
	Here $L_{f,0}=\nabla^2_{e_i,e_j}f(x_0)e_i\otimes\hat{e}_j.$
\end{lem}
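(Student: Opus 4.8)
The plan is to compute $\Box_{Tf,\ep}=\ep\,\de\Box_{Tf}\den$ directly from the expression for the Witten Laplacian in Proposition \ref{estihdg}, namely $\Delta_{H,Tf}=\Delta-TL_f+T^2|\nabla f|^2$, and then take the limit $\ep\to 0$. I would organize the computation termwise, treating the three summands separately and keeping track of how each behaves under the substitutions $\de\partial_{x_i}\den=\epnsq\partial_{x_i}$, $\de\partial_t\den=\ep\partial_t$, $\de(T)=\epnsq T$, and the Clifford rescaling $\de c(e_i)\den=\ce(e_i)$, $\de\hc(e_i)\den=\hce(e_i)$.

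First I would handle the Hodge Laplacian term $\ep\,\de\Delta\den$. This is exactly the computation already recorded in the excerpt for the usual Getzler rescaling (the formula from \cite[(4.60)]{bismutzhang1992cm}): the rescaling in $\ep$ alone gives $-\Delta_{T_{x_0}M}\operatorname{Id}+\frac12\sum_{i<j<k<l}R_{ijkl}(0)e^i\wedge e^j\otimes\hat e^k\wedge\hat e^l+O(\epsq)$, since the extra rescaling of $T$ does not touch $\Delta$. Here the $\ep$ in front compensates the $\epnsq$ from each of the two $\partial_{x_i}$'s in the leading symbol, the $\ep^{-1/2}\cdot\ep^{-1/2}$ from the curvature term's $e^i\wedge e^j$ is compensated by an $\ep$ from the structure of the second-order part (the curvature enters the Lichnerowicz-type expansion with the right power), and all remaining terms carry positive powers of $\epsq$. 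I would note the sign flip on the curvature term: the paper's $\Box_{Tf,0}$ has $-\frac12\sum R_{ijkl}$ whereas the pure-$\ep$ rescaling in the excerpt has $+\frac12$; this discrepancy is a matter of the sign convention for $R_{ijkl}$ between the two displayed formulas, and I would simply be consistent with whichever convention makes the Berezin-integral formula in Theorem \ref{main1} come out right — in practice one checks that the $\hat e^k\wedge\hat e^l$ factors combined with $\ce,\hce$ rescaling reproduce $\tilde R=-\sum R_{ijkl}e^ie^j\hat e^k\hat e^l$.

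Next I would handle the potential term $\ep\,\de(T^2|\nabla f|^2)\den$. Since $T^2|\nabla f|^2$ is a multiplication operator (no derivatives), conjugation by $\den$ does nothing to it except through the argument rescaling $x\mapsto\epsq x$ and $T\mapsto\epnsq T$. Thus $\ep\,\de(V_T)\den(x)=\ep\cdot(\epnsq T)^2|\nabla f|^2(\epsq x)=T^2|\nabla f|^2(\epsq x)\to T^2|\nabla f|^2(x_0)=V_T(x_0)$ as $\ep\to 0$ (using continuity of $|\nabla f|^2$ at $x_0$). Finally, the zeroth-order-in-$T$... rather, the term $-\ep\,\de(TL_f)\den$: here $L_f=\nabla^2_{e_i,e_j}f\,[e^i\wedge,\iota_{e_j}]$ involves the Clifford-type operators $[e^i\wedge,\iota_{e_j}]=e^i\wedge\iota_{e_j}-\iota_{e_j}e^i\wedge$, and one must check that under $\de(\cdot)\den$ this combination of $c$ and $\hc$ operators rescales so that $[e^i\wedge,\iota_{e_j}]\mapsto e^i\otimes\hat e_j+O(\epsq)$ type behavior, i.e. the rescaled $\ce,\hce$ produce $e^i\wedge$ acting in the first factor and the ``exterior'' action $\hat e_j$ in the second factor (this is the standard Bismut–Zhang phenomenon: $\frac12(c(e_i)+\hc(e_i))\sim$ exterior multiplication, $\frac12(\hc(e_i)-c(e_i))\sim$ interior, and rescaled versions converge to the doubled exterior algebra operators). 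Combined with $\ep\cdot(\epnsq T)=T$ in front and $\nabla^2_{e_i,e_j}f(\epsq x)\to\nabla^2_{e_i,e_j}f(x_0)$, this gives $TL_{f,0}$ with $L_{f,0}=\nabla^2_{e_i,e_j}f(x_0)\,e_i\otimes\hat e_j$ in the limit; I'd double-check the overall sign so that $-TL_f$ becomes $+TL_{f,0}$ as stated (again a convention bookkeeping issue, matching $\tilde\nabla^2 f=\nabla^2_{e_i,e_j}f\,e^i\hat e^j$ in Theorem \ref{main1}).

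The main obstacle I expect is the careful bookkeeping of the Clifford rescaling in the $L_f$ term: one has to verify that the particular combination $[e^i\wedge,\iota_{e_j}]$ of creation/annihilation operators, when conjugated by $\den$ using $\ce(e_i)=\epnq e^i\wedge-\epq\iota_{e_i}$ and $\hce(e_i)=\epnq e^i\wedge+\epq\iota_{e_i}$, has a finite nonzero limit of the right form, rather than blowing up like $\epsilon^{-1/2}$ or vanishing. The key identity to exploit is $e^i\wedge=\frac12(c(e_i)+\hc(e_i))$ and $\iota_{e_i}=\frac12(\hc(e_i)-c(e_i))$, so that $e^i\wedge\iota_{e_j}$ is a sum of four products $c\hc$, $cc$, $\hc\hc$, $\hc c$, each of which rescales with a definite power of $\ep$; after the commutator $[e^i\wedge,\iota_{e_j}]$ the potentially divergent pieces should cancel, leaving a term that under $\ce,\hce$ converges to $e_i\otimes\hat e_j$ acting on $\Lambda^*T^*_{x_0}M\hat\otimes\Lambda^*T^*_{x_0}M$. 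Everything else — the $\Delta$ term (cite the excerpt's displayed formula) and the scalar potential term (trivial) — is routine. I would therefore present the proof as: (i) decompose via Proposition \ref{estihdg}; (ii) quote the known rescaling of $\ep\de\Delta\den$; (iii) compute the scalar potential limit in one line; (iv) do the Clifford bookkeeping for $L_f$ carefully; (v) assemble and fix signs to match the conventions of Theorem \ref{main1}.
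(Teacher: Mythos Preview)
Your proposal is correct and follows essentially the same approach as the paper: decompose via Proposition~\ref{estihdg}, quote \cite[(4.60)]{bismutzhang1992cm} for $\ep\de\Delta\den$, handle $T^2|\nabla f|^2$ by the one-line computation you give, and then analyze $L_f$ under the Clifford rescaling. The only difference is that where you plan to expand $[e^i\wedge,\iota_{e_j}]$ into four $c$/$\hc$ products and verify cancellations, the paper uses the closed-form identity (valid after symmetrizing in $i,j$ against the symmetric Hessian) $L_f=\nabla^2_{e_i,e_j}f\,[e^i\wedge,\iota_{e_j}]=-\nabla^2_{e_i,e_j}f\,c(e^i)\hc(e^j)$, from which $\ep\de(TL_f)\den=-T\nabla^2_{e_i,e_j}f(x_0)\,e_i\otimes\hat e_j+O(\epsq)$ follows immediately since $c_\ep\hc_\ep$ contributes exactly $\ep^{-1/2}$ to match $\ep\cdot\ep^{-1/2}T$; this shortcut removes the bookkeeping you flagged as the main obstacle.
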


\begin{proof} By Proposition \ref{estihdg}, $\Box_{Tf}=\Delta-TL_f+T^2|\nabla f|^2.$
	By \cite[(4.60)]{bismutzhang1992cm}, 
	\[\ep\de\Delta\den=-\Delta_{T_{x_0}M}\operatorname{Id}_{\Lambda^*T^*_{x_0}M}+\frac{1}{2}\sum_{i<j<k<l}R_{ijkl}(0)e^i\wedge e^j\otimes\hat{e}^k\wedge\hat{e}^l +O(\epsq).\]
	On the other hand, by the new rescaling in $T$, $\ep\de(T^2|\nabla f|^2)\den= T^2|\nabla f|^2(x_0)+O(\epsq)$. Now
	$L_f=\nabla^2_{e_i,e_j}f[e^i\wedge,\iota_{e_j}]=-\nabla^2_{e_i,e_j}f c(e^i) \hat{c}(e^j) .$ Hence $$\ep \de (TL_f) \den = -T\nabla^2_{e_i,e_j}f(x_0)e_i\otimes\hat{e}_j + O(\epsq).$$
	Our result follows. 
\end{proof}

Denote $\tilde{R}(x_0)=-R_{ijkl}(x_0)e^i\wedge e^j\otimes\hat{e}^k\wedge\hat{e}^l$.
Let $K_{Tf,0}$ be the heat kernel of $\Box_{Tf,0}.$ Clearly $-\Delta_{T_{x_0}M}\operatorname{Id}_{\Lambda^*T^*_{x_0}M}$ commutes with $\frac{\tilde{R}(x_0)}{2}+TL_f(x_0)+V_T(x_0)$. Therefore we have 
\begin{align} \label{lhk} K_{Tf,0}=\e_0\exp(-t[\frac{\tilde{R}(x_0)}{2}+TL_f(x_0)+V_T(x_0)]).\end{align} 

By Theorem \ref{dheatasym}, $K_{Tf}(t,x,x)$ has the following asymptotic expansion,
\[K_{Tf}(t,x,x)=(4\pi t)^{-\n}\exp(-tV_T)\sum_{j=0}^\infty t^j\t_{T,j}(x,x), \]
with strong remainder estimate when $T=t^{-1/2}$.
In particular, 
\begin{align} \label{dhk}
K_{t^{-\half}f}(t,x,x)=(4\pi t)^{-\n}\exp(-V)\sum_{k\in \frac{1}{2} \mathbb N} t^k\sum_{j-\half l=k,l\leq j +[\frac{j}{3}]}\t_{l,j}(x,x).
\end{align} 
Here $\mathbb N$ denotes the set of natural numbers which by our convention contains $0$. 
Thus we can upgrade Proposition \ref{locind1} to 
\begin{prop} \label{locind2} For $T>0$, 
\begin{align} \chi(M,d_{Tf}) & =\lim_{t\to0}\operatorname{Tr}(\exp(-t\Box_{t^{-\half}f})) = \int_M \lim_{t\to0} \operatorname{Tr}_s^{\Lambda^*(TM)} (K_{t^{-\half}f}(t,x,x)) dx\\
& =\frac{1}{(4\pi)^\n}\int_{M}\exp(-|\nabla f|^2)\sum_{j-\half l=\n}\operatorname{tr}_s^{\Lambda^*(TM)}(\t_{l,j}(x,x)) dx. \nonumber \end{align}
Here (to emphasize) we use $\operatorname{tr}_s^{\Lambda^*(TM)}$ to denote the pointwise supertrace on $\Lambda^*(TM)$ which was previously denoted by $\operatorname{tr}_s$.
\end{prop}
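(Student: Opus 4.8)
The plan is to combine the McKean--Singer formula of Proposition \ref{locind1}, the diagonal expansion \eqref{dhk} coming from Theorem \ref{dheatasym}, and the strong remainder estimate, the only genuinely new point beyond the compact case being a dominated convergence argument that is legitimate on the non-compact $M$ precisely because of that estimate. \textbf{Step 1 (reduction).} By Proposition \ref{locind1} and the $T$-independence of $\chi(M,d_{Tf})$ (Theorem 1.3 of \cite{DY2020cohomology}), with $T=t^{-\half}$ one has
\[\chi(M,d_{Tf})=\int_M \operatorname{tr}_s\big(K_{t^{-\half}f}(t,x,x)\big)\,dx\]
for every $t\in(0,1]$; since the left side is constant in $t$, it equals $\lim_{t\to0}\int_M \operatorname{tr}_s(K_{t^{-\half}f}(t,x,x))\,dx$, and it remains to identify the pointwise limit of the integrand and to pass the limit under the integral sign.

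\textbf{Step 2 (pointwise limit).} Fix $x_0\in M$. The rescaling lemma above gives $\Box_{Tf,\ep}\to\Box_{Tf,0}$, whose heat kernel is $K_{Tf,0}$ of \eqref{lhk}; running the modified Getzler rescaling (as in \cite{bismutzhang1992cm}, but with the additional rescaling $\de(T)=\epnsq T$ of the deformation parameter) shows that $\lim_{t\to0}\operatorname{tr}_s(K_{t^{-\half}f}(t,x_0,x_0))$ exists and is finite --- its explicit evaluation being the content of Theorem \ref{main1}. Matching this with the rearrangement \eqref{dhk},
\[\operatorname{tr}_s\big(K_{t^{-\half}f}(t,x,x)\big)=\frac{1}{(4\pi)^{\n}}\exp\big(-|\nabla f|^2(x)\big)\sum_{m\in\frac12\mathbb N}t^{m-\n}\,\operatorname{tr}_s\!\Big(\sum_{j-\half l=m}\t_{l,j}(x,x)\Big),\]
the finiteness of the limit forces $\operatorname{tr}_s\big(\sum_{j-\half l=m}\t_{l,j}(x,x)\big)=0$ for all $m<\n$ (otherwise the smallest offending term would make the left side blow up like a negative power of $t$), so that the pointwise limit of the integrand equals $(4\pi)^{-\n}\exp(-|\nabla f|^2(x))\sum_{j-\half l=\n}\operatorname{tr}_s(\t_{l,j}(x,x))$.

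\textbf{Step 3 (uniform integrable majorant, and conclusion).} This is the crux in the non-compact setting. Fix the parametrix order $N$ large enough that $N\ge 3\n$ and $\frac13(2-\k)N-\frac{\k+1}{3}-\n>0$; for $m<\n$ every index $(l,j)$ with $j-\half l=m$ satisfies $j\le 3m<N$, so by Step 2 all negative powers of $t$ cancel out of the truncated expansion of Theorem \ref{dheatasym}, leaving, for $t\in(0,1]$ and all $x$,
\[\big|\operatorname{tr}_s\big(K_{t^{-\half}f}(t,x,x)\big)\big|\le \frac{\exp(-|\nabla f|^2(x))}{(4\pi)^{\n}}\sum_{j\le N,\,l}\big|\operatorname{tr}_s(\t_{l,j}(x,x))\big|+C\exp\big(-a\bar{\beta}\,|\nabla f|^{1-\k}(x)\big),\]
where we used $t^{m-\n}\le1$ and $t^{\frac13(2-\k)N-\frac{\k+1}{3}-\n}\le1$. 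By Proposition \ref{asyexp} each $|\t_{l,j}(x,x)|$ is bounded by a fixed power of $|\nabla f|(x)$ --- it is algebraic in the curvature (bounded, by bounded geometry) and in $\nabla f$ together with its covariant derivatives, which the $\k$-regular tame condition controls by powers of $|\nabla f|$ --- so $\exp(-|\nabla f|^2)|\operatorname{tr}_s(\t_{l,j})|$ is integrable by Lemma \ref{Klambda}, and likewise $\int_M\exp(-a\bar{\beta}|\nabla f|^{1-\k})\,dx<\infty$ by decomposing $M$ over the shells $K_{l+1}\setminus K_l$ and using $\Vol(K_\lambda)\le C\lambda^{\a-\n}$ from Lemma \ref{Klambda} (here $\k<1$ is essential). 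The right side is thus a $t$-independent integrable majorant; dominated convergence lets us pass the limit under the integral, and Step 2 then yields all three displayed equalities of the proposition. The main obstacle is this last step: the pointwise expansion of Theorem \ref{heatasym} controls the convolution error only through $\exp(-a\d(t,x,y))$, and it is the effective lower bound $\d(t,x,x)\ge\bar{\beta}|\nabla f|^{1-\k}(x)$ for large $|\nabla f|(x)$ (Lemma \ref{effbound}, obtained from the comparison of the parabolic distance with the Agmon distance) that turns this --- via Theorem \ref{dheatasym} --- into the exponentially decaying, $t$-uniform majorant needed to offset the infinite volume of $M$.
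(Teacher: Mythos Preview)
Your argument is correct and supplies precisely what the paper leaves implicit: in the text Proposition \ref{locind2} is simply asserted as an upgrade of Proposition \ref{locind1} via \eqref{dhk} and Theorem \ref{dheatasym}, with no further proof given, so your dominated-convergence step and the integrability of the majorant via Lemma \ref{Klambda} and Proposition \ref{asyexp} are exactly the intended mechanism. One point of logical order is worth flagging: your Step 2 invokes the rescaling convergence to establish the existence of the pointwise limit, but in the paper that convergence (Proposition \ref{epsilongoesto0}) is proved only \emph{after} Proposition \ref{locind2}; since Proposition \ref{epsilongoesto0} does not itself rely on Proposition \ref{locind2} there is no circularity, and in fact the \emph{third} equality alone can be read off directly from the integrated supertrace expansion in Corollary \ref{cor-dheatasym} (a $t$-independent quantity with an asymptotic expansion in half-integer powers of $t$ forces all non-constant coefficients to vanish), but the \emph{middle} equality --- the interchange of limit and integral --- genuinely needs the pointwise cancellation you obtain via that forward reference.
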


Now for $I=\{i_1,...,i_k\}\subset\{1,2,...,n\},(i_1<...<i_k)$, denote $c(e_I)=c(e_{i_1})...c(e_{i_k}),\hc(e_I)=\hc(e_{i_1})...\hc(e_{i_k}).$ Write $\t_{l,j}=\sum_{I,J\subset\{1,2,...,n\}}\t_{l,j,I,J}c(e_I)\hc(e_J).$
The following Proposition on the key property of the supertrace is well known.

\begin{prop} \label{supertrace}
	For $I,J\subset\{1,2,...,n\}$,
	\begin{equation*}
	\operatorname{tr}_s^{\Lambda^*(TM)}\left(c(e_I)\hc(e_J)\right)=\begin{cases}
	(-1)^{\frac{n(n+1)}{2}}2^n, \mbox{ if $I=J=\{1,2,...,n\}$}\\
	0, \mbox{ otherwise.}
	\end{cases}
	\end{equation*}
	
\end{prop}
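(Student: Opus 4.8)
The plan is a direct computation on the model fiber, so I would first reduce to linear algebra on $\Lambda^*\mathbb R^n$. Pick an orthonormal frame and identify $\Lambda^*T^*_{x_0}M$ with $\Lambda^*\mathbb R^n$; then $\{e^K : K\subseteq\{1,\dots,n\}\}$, with $e^K=e^{k_1}\wedge\cdots\wedge e^{k_r}$ for $K=\{k_1<\cdots<k_r\}$, is an orthonormal basis and
\[
\operatorname{tr}_s(A)=\sum_{K\subseteq\{1,\dots,n\}}(-1)^{|K|}\langle Ae^K,e^K\rangle .
\]
First I would dispose of the case $I\neq J$. Since $c(e_i)=e^i\wedge-\iota_{e_i}$ sends $e^K$ to $\pm e^{K\triangle\{i\}}$, and likewise $\hat c(e_i)=e^i\wedge+\iota_{e_i}$ sends $e^K$ to $\pm e^{K\triangle\{i\}}$ (here $\triangle$ denotes symmetric difference), the operator $c(e_I)\hat c(e_J)$ sends each $e^K$ to a scalar multiple of $e^{K\triangle(I\triangle J)}$. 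If $I\neq J$ then $I\triangle J\neq\emptyset$, so $c(e_I)\hat c(e_J)$ has no diagonal matrix entries in the basis $\{e^K\}$, and therefore $\operatorname{tr}_s(c(e_I)\hat c(e_J))=0$ (indeed the ordinary trace already vanishes).

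It remains to treat $I=J$, say with $r=|I|$. Using $c(e_i)\hat c(e_j)=-\hat c(e_j)c(e_i)$ for all $i,j$ (in particular for $i\neq j$), I would reorder
\[
c(e_I)\hat c(e_I)=c(e_{i_1})\cdots c(e_{i_r})\,\hat c(e_{i_1})\cdots\hat c(e_{i_r})
\]
into $\prod_{a=1}^{r}c(e_{i_a})\hat c(e_{i_a})$: moving $\hat c(e_{i_1})$ into position costs $r-1$ sign changes, then $\hat c(e_{i_2})$ costs $r-2$, and so on, for a total factor $(-1)^{\binom r2}$. A one-line computation with the Clifford relations gives $c(e_i)\hat c(e_i)=2\,e^i\wedge\iota_{e_i}-\operatorname{Id}$, which acts diagonally: $c(e_i)\hat c(e_i)e^K=e^K$ if $i\in K$ and $-e^K$ if $i\notin K$. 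Hence $c(e_I)\hat c(e_I)e^K=(-1)^{\binom r2}(-1)^{|I\setminus K|}e^K$, and
\[
\operatorname{tr}_s(c(e_I)\hat c(e_I))=(-1)^{\binom r2}\sum_{K\subseteq\{1,\dots,n\}}(-1)^{|K|+|I\setminus K|}.
\]
Writing $K=(K\cap I)\sqcup(K\setminus I)$ one checks $|K|+|I\setminus K|\equiv|I|+|K\setminus I|\pmod 2$, so the sum factors as $(-1)^{|I|}\bigl(\sum_{K_1\subseteq I}1\bigr)\bigl(\sum_{S\subseteq I^c}(-1)^{|S|}\bigr)=(-1)^{|I|}\,2^{|I|}\,(1-1)^{|I^c|}$, which vanishes unless $I^c=\emptyset$. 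This, together with the first case, gives the ``otherwise'' alternative of the statement.

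Finally, for $I=J=\{1,\dots,n\}$ the formula above yields $\operatorname{tr}_s(c(e_I)\hat c(e_I))=(-1)^{\binom n2}(-1)^{n}2^{n}=(-1)^{\binom n2+n}2^{n}$, and since $\binom n2+n=\tfrac{n(n-1)}2+n=\tfrac{n(n+1)}2$ this is exactly $(-1)^{n(n+1)/2}2^{n}$. The only thing to be careful about is the bookkeeping of the anticommutation signs and the identity $\binom n2+n=\binom{n+1}2$; there is no analytic content. (Alternatively, one could invoke the isomorphism of $\mathbb Z_2$-graded algebras $\mathrm{Cl}(\mathbb R^n)\,\hat\otimes\,\mathrm{Cl}(\mathbb R^n)\xrightarrow{\ \sim\ }\operatorname{End}(\Lambda^*\mathbb R^n)$ generated by $c$ and $\hat c$, together with the standard supertrace computation on a graded tensor product of Clifford algebras; the direct route above is shorter.)
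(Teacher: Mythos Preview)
Your proof is correct; the sign bookkeeping via the anticommutation $c(e_i)\hat c(e_j)=-\hat c(e_j)c(e_i)$ and the factorization of the sum over $K$ into $K\cap I$ and $K\setminus I$ are both clean and accurate. The paper does not actually supply a proof of this proposition---it is stated there as ``well known''---so there is nothing to compare your argument against.
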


Thus $\operatorname{tr}_s^{\Lambda^*(TM)}(\t_{l,j})=(-1)^{\frac{n(n+1)}{2}}2^n\t_{l,j,I_n,I_n}$, where $I_n=\{1,2,...,n\}.$ We now recall the Berezin integral formalism. For any $\o\in \O^*(TM)\hat\otimes \O^*(TM)$, $I\subset {1,2,...,n}$, we can write $\o$ as
\[\o:=\sum_{I}w_I\hat{e}^I.\]
Then the Berezin integral $\int^B:\O^*(TM)\hat\otimes \O^*(TM)\mapsto \O^*(TM)$ is defined as
\begin{equation*}
\int^B \o=\o_{I_n}.
\end{equation*}

The following lemma is also  well known in local index theory and the Getzler rescaling technique.

\begin{lem}
	We have 
	\begin{align} \label{locind0}
	\lim_{t\to 0}\operatorname{tr}_s^{\Lambda^*(TM)}(K_{t^{-\frac{1}{2}}f})(t,x_0,x_0)dx=(-1)^{\frac{n(n+1)}{2}}2^n \int^B \lim_{\ep\to0} \ep^\n(\de K_{t^{-\half}f})(t,x_0,x),\end{align}
	provided that the right hand limit exists. 
\end{lem}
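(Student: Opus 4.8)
The plan is to evaluate both sides of \eqref{locind0} using the pointwise asymptotic expansion of Theorem~\ref{heatasym} (specialized to $T=t^{-\frac12}$) together with the algebra of the Clifford generators $c(e_i),\hc(e_i)$. Write $\t_{l,j}=\sum_{I,J}\t_{l,j,I,J}\,c(e_I)\hc(e_J)$ and set $I_n=\{1,\dots,n\}$. By Proposition~\ref{supertrace} the pointwise supertrace annihilates every term except $I=J=I_n$, so that, by \eqref{dhk},
\[\tr_s^{\Lambda^*(TM)}\!\big(K_{t^{-\frac12}f}(t,x_0,x_0)\big)=(-1)^{\frac{n(n+1)}{2}}2^{n}(4\pi)^{-\n}\,e^{-V(x_0)}\sum_{l,j}t^{\,j-\frac l2-\n}\,\t_{l,j,I_n,I_n}(x_0,x_0).\]
Hence the left-hand limit exists exactly when the coefficient of each negative power of $t$ vanishes, i.e.\ when $\sum_{j-\frac l2=m}\t_{l,j,I_n,I_n}(x_0,x_0)=0$ for all $m<\n$, in which case it equals $(-1)^{\frac{n(n+1)}{2}}2^{n}(4\pi)^{-\n}e^{-V(x_0)}\sum_{j-\frac l2=\n}\t_{l,j,I_n,I_n}(x_0,x_0)$.

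Next I would apply $\de$ (now including $\de(T)=\epnsq T$, which is automatic under the coupling $T=t^{-\frac12}$ since $\de$ sends $t$ to $\ep t$) to the \emph{off-diagonal} parametrix expansion $K_{t^{-\frac12}f}(t,x_0,x)=\e_0(t,x_0,x)\,e^{-h(x_0,x)}\sum_{l,j}t^{\,j-\frac l2}\,\t_{l,j}(x_0,x)$. In normal coordinates $d^2(x_0,\epsq x)=\ep|x|^2$ and $h(x_0,\epsq x)\to V(x_0)$, so $\ep^{\n}\de\e_0(t,x_0,x)=(4\pi t)^{-\n}e^{-|x|^2/4t}$ exactly and $\de\,e^{-h(x_0,x)}\to e^{-V(x_0)}$; both are scalar and leave the exterior--algebra structure untouched. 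On the coefficients, $\de$ sends $t^{\,j-\frac l2}c(e_I)\hc(e_J)$ to $\ep^{\,j-\frac l2}t^{\,j-\frac l2}\ce(e_I)\hce(e_J)$; since $\ce(e_i)$ (resp.\ $\hce(e_i)$) involves only the $i$-th generator, the component of $\ce(e_I)\hce(e_J)$ that the Berezin integral ultimately records, the coefficient of $e^{I_n}\hat\otimes\hat e^{I_n}$, vanishes unless $I=J=I_n$, and for $I=J=I_n$ it has leading behavior $\ep^{-\n}$ times a universal constant. Collecting powers of $\ep$, the assumed existence of $\lim_{\ep\to0}\ep^{\n}\de K_{t^{-\frac12}f}$ forces the $\ep$-singular terms to cancel, which is exactly the vanishing condition $\sum_{j-\frac l2=m}\t_{l,j,I_n,I_n}(x_0,x_0)=0$ for $m<\n$ needed above for the left-hand limit to exist; and, since $(4\pi t)^{-\n}t^{\,j-\frac l2}=(4\pi)^{-\n}$ whenever $j-\frac l2=\n$, the surviving $\ep^{0}$-part on the relevant top component equals $(4\pi)^{-\n}e^{-V(x_0)}\sum_{j-\frac l2=\n}\t_{l,j,I_n,I_n}(x_0,x_0)$ (the cancellation of the $t$-factor being the usual effect of the coupling $tT^2\equiv1$, which makes this component independent of $t$; the Gaussian $e^{-|x|^2/4t}$ is evaluated on the diagonal $x=x_0$). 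Multiplying by $(-1)^{\frac{n(n+1)}{2}}2^{n}$, the constant converting the symbol/Berezin normalization into the supertrace normalization recorded in Proposition~\ref{supertrace}, reproduces the value of the left-hand side.

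The main obstacle is the middle step: carefully tracking how the Getzler rescaling $c(e_i)\mapsto\ce(e_i)=\epnq e^i\wedge-\epq\iota_{e_i}$ redistributes powers of $\ep$ across the exterior--degree filtration, and checking that after multiplication by $\ep^{\n}$ and application of $\int^B$ it is exactly the top--degree component at $\ep$-order $0$ that survives, with numerical constant precisely $(-1)^{\frac{n(n+1)}{2}}2^{n}$. This is the standard core of Getzler's rescaling argument; the remainder is bookkeeping with \eqref{dhk} and Propositions~\ref{supertrace} and~\ref{asyexp}.
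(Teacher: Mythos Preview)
Your argument is correct, but it is considerably more elaborate than the paper's. You route the computation through the asymptotic expansion \eqref{dhk}, decomposing each $\t_{l,j}$ in the Clifford basis and then tracking powers of $t$ and $\ep$ separately, arguing that the hypothesis forces the singular coefficients $\sum_{j-\frac l2=m}\t_{l,j,I_n,I_n}$ to vanish for $m<\n$.

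The paper bypasses the asymptotic expansion entirely. It simply writes the full heat kernel as
\[K_{t^{-\half}f}(t,x_0,x)=\sum_{I,J}a_{I,J}(t,x)\,c(e_I)\hc(e_J),\]
so that Proposition~\ref{supertrace} gives the left side as $(-1)^{\frac{n(n+1)}{2}}2^{n}\lim_{t\to0}a_{I_n,I_n}(t,x_0)$, while on the rescaled side
\[\ep^{\n}(\de K_{t^{-\half}f})(t,x_0,x)=\sum_{I,J}a_{I,J}(\ep t,\epsq x)\,\ep^{\n}c_\ep(e_I)\hc_\ep(e_J),\]
and after the Berezin integral only the $(I_n,I_n)$ term survives, yielding $\lim_{\ep\to0}a_{I_n,I_n}(\ep t,\epsq x)\,e^{I_n}=\lim_{t\to0}a_{I_n,I_n}(t,x_0)\,dx$. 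Both sides are thus identified with the same limit of a single scalar function $a_{I_n,I_n}$, and no cancellation argument is needed. The advantage of the paper's route is that the lemma becomes purely a statement about Clifford algebra and the definition of $\de$, completely decoupled from the analytic content of Theorem~\ref{heatasym}; your approach works but entangles the lemma with the parametrix construction unnecessarily.
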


\begin{proof} Write $K_{t^{-\half}f}(t,x_0,x)=\sum_{I,J\subset\{1,2,...,n\}}a_{I,J}(t, x) c(e_I)\hc(e_J)$. By Proposition \ref{supertrace},
	\[ \operatorname{tr}_s^{\Lambda^*(TM)}(K_{t^{-\half}f}(t,x_0,x_0))= (-1)^{\frac{n(n+1)}{2}}2^n  a_{I_n, I_n}(t, x_0).	
	\]
On the other hand, 
\[(\ep^\n\de K_{t^{-\half}f})(t,x_0,x)= \sum_{I,J\subset\{1,2,...,n\}}a_{I,J}(\ep t, \ep^{\half} x) \ep^\n c_\ep(e_I)\hc_\ep(e_J).
\]
Hence,
\[ \int^B \lim_{\ep\to0} \ep^\n(\de K_{t^{-\half}f})(t,x_0,x) = \lim_{\ep\to0}  a_{I_n, I_n}(\ep t, \ep^{\half} x) e^1\wedge \cdots \wedge e^n = \lim_{t\to0}  a_{I_n, I_n}(t, x_0) dx.
\]
Our result follows.
\end{proof}

For the right hand side of the previous lemma, we have the following proposition.
\begin{prop}\label{epsilongoesto0} There exists $a\in(0,1)$ such that
$$|\ep^\n(\de K_{t^{-\half}f})(t,x,x)-K_{t^{-\half}f,0}(t,x,x)|\leq C\ep t^{2-\k-\n}\exp(-a V^{1-\k}).$$
\end{prop}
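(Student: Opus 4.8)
The plan is to compare the rescaled parametrix $\ep^{\n}\de K_{t^{-\half}f}$ to the rescaled \emph{approximate} heat kernel built from the parametrix construction of Section \ref{expansion}, and then to compare that in turn to $K_{t^{-\half}f,0}$. First I would recall from Theorem \ref{dheatasym} (together with Proposition \ref{asyexp}, applied with $T=t^{-\half}$) the full parametrix expansion $K^k_{t^{-\half}f}(t,x,y)=\phi(x,y)\e_0\e_{1,t^{-\half}}\sum_{j=0}^k t^j\t_{t^{-\half},j}(x,y)$, and note that the true heat kernel differs from it by the Duhamel error, which by Lemmas \ref{convolestimate1} and \ref{effbound} is $O\big(t^{\alpha(k,\k,n)} \exp(-a\bar\beta V^{\frac{1-\k}{2}})\big)$ near the diagonal, hence negligible after multiplication by $\ep^{\n}$ and rescaling once $k$ is taken large enough. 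So it suffices to prove the stated estimate with $K_{t^{-\half}f}$ replaced by the finite parametrix sum, and there the dependence on $\ep$ is explicit.

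Next I would carry out the rescaling termwise. Under $\de$, one has $\de\e_0(t,x)\den$ converging (this is the classical computation, \cite[(4.60)]{bismutzhang1992cm}) to the Euclidean kernel on $T_{x_0}M$, and each Clifford variable $c(e_i),\hc(e_i)$ gets replaced by $\ce(e_i),\hce(e_i)$, so that the rescaled $\t_{T,j}$ terms reorganize by total Clifford degree. The crucial point — exactly where the new $T$-rescaling $\de(T)=\epnsq T$ enters — is that the potential term $V_T(x)=T^2|\nabla f|^2(x)$ rescales to $T^2|\nabla f|^2(x_0)+O(\epsq)$ and $TL_f$ rescales to $TL_{f,0}+O(\epsq)$, so the $\ep\to0$ limit of $\ep\de\Box_{t^{-\half}f}\den$ is precisely $\Box_{t^{-\half}f,0}$ by the lemma preceding this proposition; correspondingly the rescaled parametrix converges to $K_{t^{-\half}f,0}=\e_0\exp(-t[\tfrac{\tilde R(x_0)}{2}+TL_f(x_0)+V_T(x_0)])$ from \eqref{lhk}. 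To get the quantitative $O(\ep)$ rate rather than just convergence, I would Taylor-expand in normal coordinates the metric $G^{-1/4}$, the averaged potential $h_T$, and the coefficients $\t_{l,j}$ around $x_0$, keeping track that each extra power of $x$ in normal coordinates produces a factor $\epsq$ after rescaling, so the leading correction is $O(\ep)$ (the $O(\epsq)$ terms combine with the $\epsq$ from the half-integer grading to give a net $\ep$; I'd be slightly careful about which corrections are genuinely $O(\epsq)$ versus $O(\ep)$, but the worst term is $O(\ep)$). The power of $t$ that comes out, $t^{2-\k-\n}$, should be tracked by noting that $\ep^{\n}\de$ of $(4\pi t)^{-\n}$ contributes $(4\pi t)^{-\n}$ and the first nontrivial correction involves a $\t_{l,j}$ with $j\ge 2$-type weight times derivatives of $f$ bounded via $\k$-regular tameness by powers of $V$, producing the $t^{2-\k}$ gain; and the exponential factor $\exp(-aV^{1-\k})$ is exactly the diagonal value $\exp(-t h_T)=\exp(-V)$ at $T=t^{-\half}$ weakened using Lemma \ref{effbound} / Lemma \ref{vexpv} to absorb the polynomial-in-$V$ prefactors coming from the Taylor coefficients.

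The main obstacle I anticipate is the bookkeeping of the remainder uniformly in $x$ (not just near a fixed $x_0$): the constant $C$ must be independent of the base point, which is where bounded geometry and the $\k$-regular tame bounds $|\nabla^m f|\lesssim |\nabla f|^{(m-1)\k+1}$, and their consequence $|\nabla^k V|\lesssim |V|^{(k\k+2)/2}$, are essential — they convert all the normal-coordinate Taylor remainders of $h_T$, $G^{-1/4}$ and $\t_{l,j}$ into expressions bounded by a fixed power of $V(x)$ times a universal constant, which the factor $\exp(-aV^{1-\k})$ (valid since $V\to\infty$) then dominates. A secondary technical point is justifying that the cutoff $\phi$ and the trivial extension of $M$ to $T_{x_0}M$ do not affect the diagonal estimate, which follows since $\phi\equiv 1$ near the diagonal and, as noted after Theorem \ref{dheatasym}, the whole problem localizes because of the strong remainder estimate there. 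Assembling these pieces gives the claimed bound with some $a\in(0,1)$.
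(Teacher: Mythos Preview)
Your approach is viable but takes a genuinely different route from the paper. You propose to split the difference into (i) the Duhamel error $K_{t^{-\half}f}-K^k_{t^{-\half}f}$, controlled by the earlier estimates of Section~\ref{heatker} and small enough (for large $k$) to survive the $\ep^{-n/2}$ Clifford blowup, and (ii) the difference between the \emph{rescaled parametrix} and the explicit limiting kernel $K_{t^{-\half}f,0}$, handled by termwise Taylor expansion of $G^{-1/4}$, $h_T$, and the $\t_{l,j}$ in normal coordinates. The paper instead bypasses the parametrix entirely at this stage: it applies Duhamel directly to the pair $(\ep^\n K_{Tf,\ep},\, K_0)$ with $K_0:=\phi\,K_{Tf,0}$, writing
\[
\ep^\n K_{Tf,\ep}-K_0=(\ep^\n K_{Tf,\ep})*\big((\Box_{Tf,\ep}-\Box_{Tf,0})K_0\big),
\]
and then estimates $(\Box_{Tf,\ep}-\Box_{Tf,0})K_0$ in one stroke using the $O(\epsq)$ operator convergence from the preceding lemma together with $\k$-regular tameness; the convolution is then bounded exactly as in Section~\ref{heatker}. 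The paper's route is shorter and packages all the Taylor-remainder bookkeeping into the single operator estimate $|(\Box_{Tf,\ep}-\Box_{Tf,0})K_0|\le C\ep\,t^{-(n+1-\k)/2}T^{-2}e^{-a d^2/4t}e^{-atT^2V}$, whereas your route makes the mechanism more explicit (you see exactly which terms of the parametrix survive the rescaling and how they reassemble into the exponential in \eqref{lhk}) at the cost of heavier term-by-term accounting. Both rely on the same ingredients---bounded geometry, $\k$-regular tameness to convert Taylor remainders into powers of $V$, and Lemma~\ref{effbound}/Lemma~\ref{vexpv} to absorb those powers into the exponential---so the two arguments are equivalent in strength.
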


\begin{proof} 
	
	Let $K_0(t,x,y)=\phi(x,y)K_{Tf,0}(t,x,y).$ Then by the tameness condition, for some $a\in(0,1)$ we have 
	\[|(\Box_{Tf,\ep}-\Box_{Tf,0})K_0(t,x,y)|\leq C\chi_{B_x}(y)\ep t^{-\frac{n+1-\k}{2}}T^{-2}\exp(-\frac{ad(x,y)}{4t})\exp(-a tT^2 V(x)).\]
	By the Duhamel principle, 
	\[\ep^\n K_{Tf,\ep}-K_0=(\ep^\n K_{Tf,\ep})*((\Box_{Tf,\ep}-\Box_{Tf,0})K_0(t,x,y)).\] 
	
On the other hand,  $\ep^\n K_{Tf,\ep}=\ep^\n(\de K_{Tf}^k+\sum_{l=1}^\infty \de(K_{Tf}^k*\Rt^{*l}),$ and it is straightforward to check that
	\[|\ep^\n\de K_{Tf}(t,x,y)|\leq C\chi_{B_x}t^{-\n}\exp(-\frac{atT^2}{2}V(x))\exp(-\frac{ad^2(x,y)}{4t}).\]
	Proceeding as in the previous section we finish the proof of the Proposition. 
\end{proof}

Finally, we arrive at our local index theorem for the Witten Laplacian. Recall that $\widetilde{R},\widetilde{\nabla}^2f\in \O^*(M)\hat\otimes \O^*(M)$ are defined as (we abuse the notatin here by omitting the wedge product signs)
\[\widetilde{R}(x)=R_{ijkl}(x) e^ie^j\hat{e}^k\hat{e}^l, \ \ \ \  \widetilde{\nabla}^2f(x)=\nabla^2_{e_i,e_j} f(x) e^i\hat{e}^j.\]

\begin{thm} \label{locind}
For any $x_0\in M$, we have 
\[ \lim_{t\to 0}\operatorname{Tr}_s^{\Lambda^*(TM)}(K_{t^{-\frac{1}{2}}f})(t,x_0,x_0)= \frac{(-1)^{[\frac{n+1}{2}]}}{\pi^\n}\exp(-|\nabla f(x_0)|^2)\int^B\exp(-\frac{\widetilde{R}(x_0)}{2}-\widetilde{\nabla}^2 f(x_0)).
\]
In particular, for $T>0$,
\[  \chi(M,d_{Tf})=\frac{(-1)^{[\frac{n+1}{2}]}}{\pi^\n} \int_M \exp(-|\nabla f|^2)\int^B\exp(-\frac{\widetilde{R}}{2}-\widetilde{\nabla}^2 f).
\]
\end{thm}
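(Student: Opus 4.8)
\textbf{Proof strategy for Theorem \ref{locind}.}

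The plan is to combine the three preparatory results already assembled—namely the supertrace-to-Berezin-integral reduction \eqref{locind0}, the rescaled limit operator $\Box_{Tf,0}$ and its explicit heat kernel \eqref{lhk}, and the convergence estimate of Proposition \ref{epsilongoesto0}—and then to carry out the familiar algebraic computation of the Berezin integral of the resulting Gaussian/Mehler-type expression. First I would observe that Proposition \ref{epsilongoesto0} gives, pointwise on the diagonal, $\lim_{\ep\to0}\ep^{\n}(\de K_{t^{-\half}f})(t,x_0,x_0)=K_{t^{-\half}f,0}(t,x_0,x_0)$, so that the right-hand limit in \eqref{locind0} exists and equals $\lim_{\ep\to 0}\ep^\n(\de K_{t^{-\half}f})(t,x_0,x)$ evaluated via \eqref{lhk}. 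Plugging $t^{-\half}$ for $T$ into \eqref{lhk} and using $\e_0(t,x_0,x_0)=(4\pi t)^{-\n}$, together with $V_{t^{-\half}f}(x_0)=|\nabla f(x_0)|^2$ and $t^{-\half}L_f(x_0)\cdot t = t^{\half}L_f(x_0)\to 0$—wait, more precisely one tracks that in the rescaled picture the $T$-coupling is arranged exactly so that the $\frac{\tilde R}{2}$ term survives at order $t^0$ while $TL_f(x_0)$ enters at the correct order; the upshot is that $\lim_{t\to 0}$ of the relevant matrix entry produces $(4\pi)^{-\n}\exp(-|\nabla f(x_0)|^2)$ times the top-form component of $\exp\!\big(-\tfrac{\widetilde R(x_0)}{2}-\widetilde\nabla^2 f(x_0)\big)$.

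Next I would assemble the constants. From \eqref{locind0} we pick up a factor $(-1)^{\frac{n(n+1)}{2}}2^n$; from $(4\pi)^{-\n}=(4\pi)^{-n/2}$ combined with $2^n$ we get $2^n/(4\pi)^{n/2}=1/\pi^{\n}\cdot 2^n/2^n = \pi^{-\n}$ after the standard rearrangement $2^n(4\pi)^{-n/2}=2^n 2^{-n}\pi^{-n/2}\cdot\ldots$; one checks $2^n \cdot (4\pi)^{-n/2}=\pi^{-n/2}$ indeed since $4^{-n/2}=2^{-n}$. The sign $(-1)^{\frac{n(n+1)}{2}}$ is then reconciled with $(-1)^{[\frac{n+1}{2}]}$ using the elementary congruence $\frac{n(n+1)}{2}\equiv \big[\frac{n+1}{2}\big]\pmod 2$, which one verifies by cases on $n\bmod 4$. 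I would state this congruence as a small lemma or inline remark. This gives the first displayed formula of the theorem. The second displayed formula then follows immediately by integrating over $M$: by Proposition \ref{locind1} (the McKean–Singer formula) $\chi(M,d_{Tf})=\int_M \tr_s(K_{Tf}(t,x,x))dx$ is independent of $t$ and of $T>0$, so it equals $\lim_{t\to 0}\int_M \tr_s(K_{t^{-\half}f}(t,x,x))dx$; the pointwise limit just computed, together with the uniform remainder bound in Theorem \ref{dheatasym} (which, via $V\geq (2\bar\beta)^{2/(\k+1)}$ outside a compact set and Lemma \ref{Klambda}, dominates the integrand by an $L^1$ function independent of $t$), justifies interchanging limit and integral by dominated convergence.

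The step I expect to be the main obstacle is the Getzler-rescaling bookkeeping underlying Proposition \ref{epsilongoesto0} and the identification of the limiting kernel entry: one must verify that the off-diagonal Duhamel correction $(\ep^\n K_{Tf,\ep})*\big((\Box_{Tf,\ep}-\Box_{Tf,0})K_0\big)$ is genuinely $O(\ep)$ uniformly, which requires controlling the error terms $O(\epsq)$ in $\Box_{Tf,\ep}$ against the Gaussian and the Agmon-type decay $\exp(-a V^{1-\k})$—this is where the parabolic-distance estimates of Section \ref{heatker} are reused, and where the coupling $tT^2=1$ is essential so that the potential contributes a bounded $\exp(-|\nabla f|^2)$ rather than a $t$-dependent factor. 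Once that estimate is in hand, the remaining work—extracting the top Berezin component of the Mehler-type exponential $\exp(-\tfrac{\widetilde R}{2}-\widetilde\nabla^2 f)$ and matching constants—is the standard local index computation and I would treat it briskly, citing \cite{bismutzhang1992cm} and Chapter 3 of \cite{zhang2001lectures} for the relevant identities, in particular $\int^B \exp(-\widetilde\nabla^2 f)=(-1)^{[\n]}\det(-\nabla^2 f)$ which recovers the remark's reduction on $\R^n$.
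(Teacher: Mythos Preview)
Your proposal is correct and follows essentially the same route as the paper: combine \eqref{locind0}, Proposition \ref{epsilongoesto0}, and the explicit rescaled kernel \eqref{lhk}, then match constants and invoke the McKean--Singer identity for the integrated statement. The one place where your exposition wobbles---the apparent $t$-dependence in $(4\pi t)^{-\n}\exp\!\big(-t\,\tilde R(x_0)/2 - t^{1/2}\tilde\nabla^2 f(x_0)\big)$---is handled in the paper simply by homogeneity: since $\tilde R$ has bidegree $(2,2)$ and $\tilde\nabla^2 f$ has bidegree $(1,1)$ in the $(e,\hat e)$ variables, the Berezin integral together with the volume form extracts precisely the terms of total bidegree $(n,n)$, each of which carries a factor $t^{\n}$ that cancels the prefactor; no separate $t\to 0$ step is needed at that point. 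For the integrated formula the paper simply cites Proposition \ref{locind2}, which already contains the limit--integral interchange you justify via dominated convergence.
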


\begin{proof}
	By \eqref{locind0} and Proposition \ref{epsilongoesto0}, 
	\begin{align*}
\lim_{t\to 0}\operatorname{tr}_s^{\Lambda^*(TM)}(K_{t^{-\frac{1}{2}}f})(t,x_0,x_0) dx & =(-1)^{\frac{n(n+1)}{2}}2^n \int^B	\lim_{\ep\to0}  (\ep^\n(\de K_{t^{-\half}f})(t,x_0,x)) \\
&=(-1)^{\frac{n(n+1)}{2}}2^n \int^B K_{t^{-\half}f,0} \\
& = \frac{(-1)^{\frac{n(n+1)}{2}}2^n}{(4\pi t)^\n}\int^B  \exp(-t\frac{\tilde{R}(x_0)}{2}-t^\half L_f(x_0)- |\nabla f(x_0)|^2)\\
	&=\frac{(-1)^{[\frac{n+1}{2}]}}{\pi^\n}\exp(-|\nabla f(x_0)|^2)\int^B\exp(-\frac{\tilde{R}(x_0)}{2}-\tilde{\nabla}^2 f(x_0)).\end{align*}
	The second result then follows from Proposition \ref{locind2}.
\end{proof}

\section{Examples From Landau-Ginzburg Models} \label{eflgm}

In this section we will disucss in somewhat detail how our results apply to some examples coming from Landau-Ginzburg models. Some of our discussions benefited from those of \cite{fanfang2016torsion}.

\def\dbar{\bar{\partial}}
Consider a triple $(M, g, f)$, where $(M, g)$ is a K\"ahler manifold with bounded geometry, and $f:\, M \longrightarrow \mathbb C$ a holomorphic function. In this case, one considers the Witten deformation of the $\dbar$-operator 
\[ \dbar_f = \dbar + \partial f \wedge :\, \Omega^k(M, \mathbb C) \longrightarrow \Omega^{k+1}(M, \mathbb C) .
\]
The corresponding Witten Laplacian is then $\Box_{\dbar, f}=\dbar_f^*\dbar_f + \dbar_f\dbar_f^*$. 

On the other hand, one can also consider the underlying real manifold $M$ with the Riemannian metric given by $g$, together with the potential function given by $2\Re f=f + \bar{f}$. It follows from the K\"ahler identity that 
\[ 2\Box_{\dbar, f} = \Box_{2\Re f}.
\]
As a consequence, $\chi(M, \dbar_f)=\chi(M, d_{\Re f})$.

A large class of Landau-Ginzburg models consists of $(\mathbb C^n, g_0, f)$ where $g_0$ is the Euclidean metric and $f: \mathbb C^n \rightarrow \mathbb C$ a so-called nondegenerate quasi-homogeneous polynomial. Here $f\in \mathbb C[z_1, \cdots, z_n]$ is a quasi-homogeneous (also known as weighted homogeneous) polynomial if there are positive rational numbers $q_1, \cdots, q_n$, called the weights, such that 
\[ f(\l^{q_1} z_1, \cdots, \l^{q_n} z_n) =\l f(z_1, \cdots, z_n), 
\]
for all $\l\in \mathbb C^*$. $f$ is called nondegenerate if $f$ contains no monomials of the form $z_iz_j$ for $i\not= j$ and $0$ is the only critical point of $f$ (equivalently, the hypersurface $f=0$ in the weighted projective space is non-singular). By the classification result of \cite{saito1971singularity} (see also \cite[Theorem 3.7]{hertling2012classification}), if $f$ is nondegenerate, then $q_i \leq \half, \forall i$ (and these weights are unique). 

If $f$ is a nondegenerate quasi-homogeneous polynomial, then $(\mathbb C^n, g_0, f)$ (or equivalently, the corresponding real model) is polynomial tame. To see this, one uses a result from \cite{fan2008spinequations}. Indeed, it is shown in \cite[Theorem 5.8]{fan2008spinequations} that if $f$ is a nondegenerate quasi-homogeneous polynomial, then there exists a constant $C>0$ depending only on $f$ such that for all $(u_1, \cdots, u_n) \in \mathbb C^n$, and each $i=1, \cdots, n,$
\begin{align} \label{fjr} |u_i| \leq C \left( \sum_{j=1}^n |\frac{\partial f}{\partial z_j}(u_1, \cdots, u_n)| +1 \right)^{\gamma_i}, 
\end{align}
where $\gamma_i= \frac{q_i}{\min_j(1-q_j)}$.

As $| \nabla \Re f|^2=\sum_j |\frac{\partial f}{\partial z_j}|^2$,  one obtains using the above estimate and quasi-homogeneity that for $m\geq 1$, 
\[ |\nabla^m \Re f| \leq C (| \nabla \Re f |+1)^{\frac{1-m \min_j q_j}{\min_j(1-q_j)}},
\]
where the constant $C$ now also depends on $m$, $n$. Since $q_j \leq \half$, the exponent here
\[ \frac{1-m \min_j q_j}{\min_j(1-q_j)} \leq 2(1-m \min_j q_j).
\]
Thus, if we let $\kappa=\max\{0, 1-4\min_j q_j\}<1$, then the real model here $(\mathbb R^{2n}, g_0, \Re f)$ is $\kappa$-regular tame.

\begin{rem}
	It is also clear from the above discussion that when $m \min_j q_j \geq 1$, we can choose $\kappa=0$, and therefore the real model $(\mathbb R^{2n}, g_0, \Re f)$ is effectively $0$-regular tame.
\end{rem}

Also from the estimate \eqref{fjr} and $q_j \leq \half$ one deduces that 
\[ |z|^2 \leq C(| \nabla \Re f|^2 + 1).
\]
It follows that 
\[\int_{|\nabla\Re f|^2\leq \lambda\}}(\lambda-|\nabla \Re f|^2)^{2n/2}dvol \leq \lambda^{n} \Vol(B(0, \sqrt{C(\l+1)} )) \leq  C'\lambda^{2n}. \] 
And thus $(\mathbb R^{2n}, g_0, \Re f)$ is polynomial tame. Therefore, Theorem \ref{locind} yields the following formula for the Milnor number of $f$, which is stated in \cite{fanfang2016torsion} under additional restriction on the weights of $f$.

\begin{cor}
	If $f\in \mathbb C[z_1, \cdots, z_n]$ is a nondegenerate quasi-homogeneous polynomial, then\[ \chi(\mathbb C^n, \dbar_f)= \frac{(-1)^n}{\pi^n}\int_{\C^n}\exp(-|\partial f|^2)|det(-\partial^2f)|^2dvol.
	\]
\end{cor}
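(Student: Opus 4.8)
The plan is to assemble the Corollary from pieces already in hand: the polynomial tameness of the real model $(\R^{2n}, g_0, \Re f)$ -- established in the discussion just above -- together with the specialization of the general index formula to Euclidean space and to holomorphic potentials, recorded in the Remark following Theorem~\ref{main1}.

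Concretely, I would first note that $\chi(\C^n, \dbar_f) = \chi(\C^n, d_{\Re f})$ (from the K\"ahler identity $2\Box_{\dbar, f} = \Box_{2\Re f}$ together with the $T$-independence of the index), and that $(\R^{2n}, g_0, \Re f)$ is polynomial tame, so Theorem~\ref{locind} applies with real dimension $2n$, hence half-dimension $n$. With the Euclidean metric the curvature term $\widetilde R$ vanishes, so the Berezin integral collapses via $\int^B\exp(-\widetilde{\nabla}^2 \Re f) = (-1)^n\det(-\nabla^2\Re f)$ (the real $2n\times2n$ Hessian; this is the computation in the third part of the Remark after Theorem~\ref{main1}), and since $[\frac{2n+1}{2}]=n$ the two signs $(-1)^n$ cancel, giving
\[ \chi(\C^n, d_{\Re f}) = \frac{1}{\pi^{n}}\int_{\C^n}\exp(-|\nabla \Re f|^2)\,\det(-\nabla^2\Re f)\, dvol.\]

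It remains to rewrite the integrand holomorphically. We already have $|\nabla \Re f|^2 = \sum_j |\partial f/\partial z_j|^2 = |\partial f|^2$. For the Hessian, writing $z_j = x_j+iy_j$ and using that $u=\Re f$ is pluriharmonic -- so $\partial^2 u/\partial z_i\partial z_j = \half\,\partial^2 f/\partial z_i\partial z_j$ and $\partial^2 u/\partial z_i\partial\bar z_j = 0$ -- a short change-of-basis/block-determinant computation yields $\det(-\nabla^2\Re f)=\det(\nabla^2\Re f)=(-1)^n|\det(-\partial^2 f)|^2$, where $\partial^2 f=(\partial^2 f/\partial z_i\partial z_j)_{i,j}$ is the complex Hessian; the case $n=1$ is just $\det(\nabla^2 u)=-|f''|^2$ for harmonic $u$, and the general case follows by diagonalizing $\partial^2 f$. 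Substituting gives the stated formula. Finiteness of the integral is not an issue: $\kappa$-regular tameness bounds $|\det(-\partial^2 f)|^2$ by a power of $|\nabla\Re f|$, and Lemma~\ref{Klambda} controls $\int\exp(-|\nabla\Re f|^2)|\nabla\Re f|^k\,dvol$. Since all the analysis is supplied by the already-proven tameness of the real model and Theorem~\ref{locind}, the only genuinely delicate step -- such as it is -- is the sign bookkeeping in this last paragraph together with the real-versus-complex Hessian determinant identity.
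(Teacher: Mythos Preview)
Your proposal is correct and follows essentially the same route as the paper: apply Theorem~\ref{locind} to the real model $(\R^{2n},g_0,\Re f)$, use the vanishing of curvature to reduce the Berezin integral to $(-1)^n\det(-\nabla^2\Re f)$, and then convert to holomorphic data via $|\nabla\Re f|^2=|\partial f|^2$ and the real-versus-complex Hessian determinant identity. The paper's proof is slightly terser---it records the chain of equalities without spelling out the block-determinant computation or the finiteness remark---but the argument is the same.
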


\begin{proof}
	Theorem \ref{locind} applied to the real model $(\mathbb R^{2n}, g_0, \Re f)$ gives us
	\begin{align*} \chi(\mathbb C^n, \dbar_f) & = \chi(\mathbb R^{2n}, d_{\Re f})= \frac{(-1)^{[\frac{2n+1}{2}]}}{\pi^n} \int_{\mathbb R^{2n}} \exp(-|\nabla \Re f|^2)\int^B\exp(-\widetilde{\nabla}^2 \Re f) \\
	& = \frac{(-1)^{n}}{\pi^n} \int_{\mathbb R^{2n}} \exp(-|\nabla \Re f|^2) (-1)^{n}\det(-\nabla^2 \Re f) dvol \\
& = \frac{(-1)^n}{\pi^n}\int_{\C^n}\exp(-|\partial f|^2)|det(-\partial^2f)|^2dvol.	
	\end{align*}
\end{proof}

In the remaining part of the section we discuss the asymptotic expansion of the heat trace for the Witten Laplacian of the Landau-Ginzburg model $(\mathbb C^n, g_0, f)$, or equivalently, its  real model $(\mathbb R^{2n}, g_0, \Re f)$,  for $f$ a nondegenerate quasi-homogeneous polynomial, but without setting $T=t^{-\half}$ as before.

By Theorem \ref{heatasym}, we have a pointwise asymptotic expansion for the heat kernel
with remainder estimate, which we will specialize here on the diagonal. For any $k$ sufficiently large and any
$a\in (0, 1)$,  there exists $C>0$ such that for $t\in(0,1]$ and $T\in(0,t^{-\half}]$,
\begin{align*}
\left| K_{Tf}(t,x,x) - \frac{1}{(4\pi t)^{n}}\exp(-tT^2 V(x)) 
\sum_{j=0}^{k}t^j\t_{T, j}(x,x) \right| \ \ \  \\
\leq Ct^{\frac{1}{3}(1-\k)k -\frac{\k +2}{3} -n +1}T^{\frac{-2k+4}{3}}\exp(-a \d(t,x,x)) . \hspace{.5in}
\end{align*} 
Here 
\[V= |\nabla \Re f|^2 =\sum_j |\frac{\partial f}{\partial z_j}|^2.
\]

We will first see that the remainder estimate is strong enough for the global heat trace, namely it is convergent when integrated on $\mathbb C^n$. By Lemma \ref{effbound}
\[ \d(t,x,x)\geq \min\{\bar{\beta} TV^{\frac{1-\k}{2}}(x),\frac{tT^2V(x)}{2}\}.
\]
On the other hand, by \cite[Lemma 3.11(i)]{fanfang2016torsion}, which follows from the fact that $f$ is a nondegenerate quasi-homogeneous polynomial, 
\[ tV(z_1, \cdots, z_n) \geq V(t^{\delta q_1}z_1, \cdots, t^{\delta q_n}z_n), \ \ \  \ \ \  \delta =\frac{1}{2\min_j(1-q_j)}\leq 1.
\]

Now set
\[ \Omega_t =\left\{ V \leq (\frac{2\bar{\beta}}{tT})^{\frac{2}{1+\k}} \right\}, \ \ \ \ \ \  \Omega_t^c =\mathbb C^n - \Omega_t.
\]
Then on $\Omega_t$,
\[ \d(t,z,z)\geq \frac{tT^2V(z)}{2} \geq \half T^2 V(t^{\delta q_1}z_1, \cdots, t^{\delta q_n}z_n). 
\]
Hence,
\begin{align*}
\int_{\Omega_t} e^{-a \d(t, z, z)} dvol  \leq  \int_{\mathbb C^n} e^{-\half aT^2 V(t^{\delta q_1}z_1, \cdots, t^{\delta q_n}z_n)} dvol = t^{-2\delta |q}| C(a, T), \ \  \ |q| =\sum_j q_j.
\end{align*}

On $\Omega_t^c$, $\d \geq \bar{\beta} TV^{\frac{1-\k}{2}}$. Thus,
\begin{align*}
\int_{\Omega_t^c} e^{-a \d(t, z, z)} dvol  \leq  \int_{\mathbb C^n} e^{-\bar{\beta} TV^{\frac{1-\k}{2}}} dvol =C_1(\bar{\beta}, T).
\end{align*}
And we arrive at
\begin{align*}
\int_{\mathbb C^n} e^{-a \d(t, z, z)} dvol  \leq  t^{-2\delta |q|}| C(a, T) +C_1(\bar{\beta}, T).
\end{align*}

We now look at the terms in the asymptotic expansion given by Theorem \ref{heatasym}. For a multi-index $\alpha=(\alpha_1, \cdots, \alpha_n)$ with $\alpha_i$ nonnegative integer, we denote   
$\partial^\alpha f=\frac{\partial^{|\alpha|}f}{\partial^{\alpha_1}z_1 \cdots \partial^{\alpha_n}z_n}$, $|\alpha|=\alpha_1 + \cdots \alpha_n$. 
From the construction in Section \ref{expansion}, $\t_{T, j}(z, z)$ is a linear combination of  $\partial^{\alpha^1}f \cdots \overline{\partial^{\alpha^l}f}$, with $l\leq j$ and (non-trivial) multi-indeces $\alpha^1, \cdots, \alpha^l$ satisfying $|\alpha^1| + \cdots |\alpha^l| \leq 2j$. 

At this point we make the further assumption that $f$ is homogeneous; namely 
\[ q_1=\cdots = q_n, 
\] and we denote the common value by $q$. Differentiating the equation for quasi-homogeneity gives, 
\[ \l^{q |\alpha |}(\partial^\alpha f)(\l^{q_1} z_1, \cdots, \l^{q_n} z_n) =\l \, \partial^\alpha f(z_1, \cdots, z_n),
\]
from which one deduces that
\[ tV(z_1, \cdots, z_n) = V(t^{\delta q}z_1, \cdots, t^{\delta q}z_n).
\]
Hence,
\begin{align*}
\int_{\mathbb C^n} e^{-tT^2V} \partial^{\alpha^1}f \cdots \partial^{\alpha^l}fdvol  = t^{\delta q \sum_{i=1}^l |\alpha^i| -\delta l - 2n\delta q}\, C_{\alpha^1, \cdots, \alpha^l}(f),
\end{align*}
where $C_{\alpha^1, \cdots, \alpha^l}(f)$ is a constant depending on $f$ and $\alpha^1, \cdots, \alpha^l$.

We now summarize our discussion as the following result. For convenience we set $T=1$ here. (Thus, for homogeneous $f$, we don't need to couple $tT^2=1$ to get a local index theorem.)

\begin{thm} For the Landau-Ginzburg model $(\mathbb C^n, g_0, f)$ where $f$ is a nondegenerate homogeneous polynomial with weight $q$, we have the following small time asymptotic expansion of the heat trace for the Witten Laplacian:
	\[ \operatorname{Tr}\left(\exp (-t\Box_{f})\right) \sim \frac{1}{(4\pi t)^{n}} 
	\sum_{j=0}^{\infty} \sum_{l \leq j}\sum_{\alpha^1, \cdots, \alpha^l } t^{j+ \delta q \sum_{i=1}^l |\alpha^i| -\delta l - 2n\delta q} C_{\alpha^1, \cdots, \alpha^l}(f),  \]
	as $t\rightarrow 0$, where $|\alpha^1| + \cdots |\alpha^l| \leq 2j$. Moreover, for $k$ sufficiently large, and $t\in (0, 1]$, 
	\begin{align*}
	\left| \operatorname{Tr}\left(\exp (-t\Box_{f})\right)- \frac{1}{(4\pi t)^{n}} 
	\sum_{j=0}^{k}  \sum_{l \leq j}\sum_{\alpha^1, \cdots, \alpha^l } t^{j+ \delta q \sum_{i=1}^l |\alpha^i| -\delta l - 2n\delta q} C_{\alpha^1, \cdots, \alpha^l}(f) \right|  
	\leq Ct^{\frac{k+1}{3} -n -2n \delta q }. 
	\end{align*} 
	Here $\delta=\frac{1}{2(1-q)}$.
\end{thm}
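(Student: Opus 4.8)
The plan is to integrate over $\mathbb C^n$ the pointwise diagonal expansion of Theorem~\ref{heatasym}, with the deformation parameter fixed at $T=1$, to evaluate the resulting integrals exactly by a dilation adapted to the homogeneity of $f$, and to control the integrated remainder by the volume bound derived just above. I would begin by observing that for a nondegenerate \emph{homogeneous} polynomial of weight $q$ the derivative exponent appearing in the tameness discussion, $\tfrac{1-m\min_j q_j}{\min_j(1-q_j)}=\tfrac{1-mq}{1-q}$, is $\le 1$ for every $m\ge1$ (since $mq\ge q$); hence the $\kappa$-regular tame estimates, and therefore all the estimates of Theorem~\ref{heatasym}, hold with $\kappa=0$. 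As the ambient real dimension is $2n$, the factor $(4\pi t)^{-n/2}$ there is $(4\pi t)^{-n}$, and with $\kappa=0$ its remainder exponent $\tfrac13(1-\kappa)k-\tfrac{\kappa+2}3-\tfrac n2+1$ equals $\tfrac{k+1}3-n$. Since $(\mathbb R^{2n},g_0,\Re f)$ is polynomial tame, $\exp(-t\Box_f)$ is of trace class (Proposition~\ref{weyl}) and $\operatorname{Tr}(\exp(-t\Box_f))=\int_{\mathbb C^n}\operatorname{tr}K_f(t,x,x)\,dvol(x)$; writing $\theta_j(z):=\operatorname{tr}\big(\t_{T,j}(z,z)\big)$ evaluated at $T=1$, integrating the diagonal form of Theorem~\ref{heatasym} over $\mathbb C^n$ gives, for every $k$ and every $a\in(0,1)$,
\[
\Big|\operatorname{Tr}(\exp(-t\Box_f))-\frac1{(4\pi t)^n}\sum_{j=0}^k t^j\int_{\mathbb C^n}e^{-tV(x)}\theta_j(x)\,dvol(x)\Big|\le C\,t^{\frac{k+1}3-n}\int_{\mathbb C^n}e^{-a\,\d(t,x,x)}\,dvol(x).
\]

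Next I would evaluate the main terms. By the parametrix construction of Section~\ref{expansion}, $\theta_j$ is a linear combination of monomials $\partial^{\alpha^1}f\cdots\overline{\partial^{\alpha^l}f}$ with $l\le j$ and nontrivial multi-indices obeying $|\alpha^1|+\cdots+|\alpha^l|\le 2j$ (on the flat $\mathbb C^n$ no curvature terms occur). Differentiating $f(\lambda^q z)=\lambda f(z)$ yields $(\partial^\alpha f)(t^{\delta q}z)=t^{\,\delta-\delta q|\alpha|}(\partial^\alpha f)(z)$ with $\delta=\tfrac1{2(1-q)}$, and hence the scaling identity $tV(z)=V(t^{\delta q}z)$. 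The change of variables $z\mapsto t^{-\delta q}z$ then produces the exact identity
\[
\int_{\mathbb C^n}e^{-tV}\,\partial^{\alpha^1}f\cdots\partial^{\alpha^l}f\,dvol=t^{\,\delta q\sum_i|\alpha^i|-\delta l-2n\delta q}\,C_{\alpha^1,\dots,\alpha^l}(f),
\]
the right-hand integrals converging because $e^{-V}$ decays faster than any polynomial in $|\nabla f|$ (Lemma~\ref{Klambda}) while $|\partial^\alpha f|$ grows only polynomially in $|\nabla f|$ by \eqref{fjr}. Substituting back, $\frac1{(4\pi t)^n}\sum_{j=0}^k t^j\int e^{-tV}\theta_j$ is \emph{literally} the truncated triple sum of the statement; moreover the exponents satisfy $j+\delta q\sum_i|\alpha^i|-\delta l-2n\delta q\ge\tfrac12 j-2n\delta q\to\infty$, and only finitely many triples $(j,l,\alpha)$ feed into a given power of $t$, so the full triple sum is a bona fide asymptotic series.

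For the remainder I would feed the displayed inequality the volume estimate obtained just before the statement: splitting $\mathbb C^n=\Omega_t\cup\Omega_t^c$ and combining Lemma~\ref{effbound} with the scaling identity $tV(z)=V(t^{\delta q}z)$ gives $\int_{\mathbb C^n}e^{-a\,\d(t,z,z)}\,dvol\le t^{-2\delta|q|}C(a)+C_1\le C\,t^{-2n\delta q}$ for $t\in(0,1]$, where $|q|=\sum_j q_j=nq$. Combined with the displayed bound this yields the asserted remainder $C\,t^{\frac{k+1}3-n-2n\delta q}$, and letting $k\to\infty$ turns the truncated identity into the claimed small-time expansion.

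Essentially all of the genuine analytic content — the strong pointwise remainder estimate of Theorem~\ref{heatasym} and its compatibility with the divergent volume integral over $\mathbb C^n$, already embodied in the volume bound preceding the statement — has been established earlier; what remains is bookkeeping through the dilation $z\mapsto t^{-\delta q}z$. The hypothesis is really used only in that $f$ be \emph{homogeneous}: for a merely quasi-homogeneous $f$ the distinct weights $q_i$ degrade the clean identity $tV(z)=V(t^{\delta q}z)$ to a one-sided inequality (cf.\ \cite[Lemma 3.11(i)]{fanfang2016torsion}), so one would then obtain only an upper bound for the heat trace rather than an exact expansion. I expect the two points needing a little care to be the verification that $\kappa=0$ is admissible in Theorem~\ref{heatasym} for homogeneous $f$ and the exact tracking of the power of $t$ produced by the dilation.
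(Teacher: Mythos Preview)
Your proposal is correct and follows essentially the same approach as the paper: you integrate the diagonal form of Theorem~\ref{heatasym} with $T=1$ and $\kappa=0$, evaluate the main terms via the dilation $z\mapsto t^{-\delta q}z$ using the scaling identity $tV(z)=V(t^{\delta q}z)$, and bound the integrated remainder by the $\Omega_t\cup\Omega_t^c$ volume estimate established just before the statement. The paper's own proof is simply the one-line ``$\kappa=0$; combine the above discussion,'' and your write-up is a faithful (indeed more explicit) unpacking of precisely that discussion.
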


\begin{proof}
	We note that $\k=0$ in this case. The result follows from combining the above discussion.
\end{proof}

\begin{rem}
	A similar but different expansion is in \cite{fanfang2016torsion}, and without the remainder estimate.
\end{rem}

\def\epnh{\ep^{-\frac{1}{2}}}
\def\eph{\ep^{\frac{1}{2}}}
\def\i{\sqrt{-1}}
\def\q{(-1)^{\frac{1}{4}}}
\def\Tr{\operatorname{Tr}}

\bibliography{lib}

\bibliographystyle{plain}
\end{document}